\newtheorem{theorem}{Theorem}
\newtheorem{lemma}{Lemma}
\newtheorem{corollary}{Corollary}
\newtheorem{remark}{Remark}
\newtheorem{conjecture}{Conjecture}
\let\oldbibliography\thebibliography
\renewcommand{\thebibliography}[1]{
	\oldbibliography{#1}
	\setlength{\itemsep}{-0.5pt}}
\renewcommand{\P}{\mathbb{P}}
\renewcommand{\O}{\mathcal{O}}
\renewcommand{\Re}{\mathbb{R}}
\renewcommand{\L}{\mathcal{L}}
\newcommand{\simTo}{\xrightarrow{\smash{\raisebox{-0.7ex}
			{\ensuremath{\scriptstyle\!\!\sim}}}}}
\newcommand{\Supp}{\mathrm{Supp}}
\newcommand{\rmA}{\mathrm{A}}
\newcommand{\N}{\mathbb{N}}
\newcommand{\Z}{\mathbb{Z}}
\newcommand{\bfQ}{\mathbf{Q}}
\newcommand{\bfP}{\mathbf{P}}
\newcommand{\bfL}{\mathbf{L}}
\newcommand{\Aut}{\mathrm{Aut}}
\newcommand{\Tr}{\mathrm{Tr}}
\newcommand{\G}{\mathbb{G}}
\newcommand{\Pic}{\mathrm{Pic}}
\newcommand{\GL}{\mathrm{GL}}
\newcommand{\EQS}{\mathcal{E}}
\newcommand{\D}{\mathcal{D}}
\newcommand{\C}{\mathcal{C}}
\newcommand{\BCH}{\mathrm{BCH}}
\newcommand{\cd}{\!\cdot\!}
\newcommand{\F}[1]{\mathbb{F}_{\!#1}}
\newcommand{\A}[1]{\mathbb{A}^{\!#1}}
\newcommand{\h}[1]{\widehat{#1}}
\newcommand{\wt}[1]{\widetilde{#1}}
\newcommand{\str}[1]{\stackrel{\approx}{#1}}
\newcommand{\inv}[1]{#1^{-\!1}}
\begin{document}	
	
	\setlength{\medmuskip}{1\medmuskip}
	\setlength{\thickmuskip}{1\thickmuskip}	
	
	\tikzset{->-/.style={decoration={
				markings,
				mark=at position #1 with {\arrow{latex}}},postaction={decorate}}}
	\tikzset{-<-/.style={decoration={
				markings,
				mark=at position #1 with {\arrow{latex reversed}}},postaction={decorate}}}	

	\begin{center}
		
		{\Large\bf Non-split toric BCH codes on singular del Pezzo surfaces}
		
		\vspace{0.5cm}
		
		{\large Dmitrii Koshelev} \footnote{
			 Web page: https://www.researchgate.net/profile/Dimitri\_Koshelev\\ 
			 \phantom{\hspace{0.65cm}} Email: dishport@yandex.ru\\
			 The work was supported in part by the Simons Foundation.
		}
		
		\vspace{0.25cm}
		
		Versailles Laboratory of Mathematics, Versailles Saint-Quentin-en-Yvelines University \\
		Center for Research and Advanced Development, Infotecs \\
		Algebra and Number Theory Laboratory, Institute for Information Transmission Problems \\
		
		\vspace{0.5cm}		
		
	\end{center}

{\bf \abstractname.} In the article we construct low-rate non-split toric $q$-ary codes on some singular surfaces. More precisely, we consider non-split toric cubic and quartic del Pezzo surfaces, whose singular points are $\F{q}$-conjugate. Our codes turn out to be BCH ones with sufficiently large minimum distance $d$. Indeed, we prove that $d - d^* \geqslant q - \lfloor 2\sqrt{q} \rfloor - 1$, where $d^*$ is the designed minimum distance. In other words, we significantly improve upon BCH bound. On the other hand, the defect of the Griesmer bound for the new codes is $\leqslant \lfloor 2\sqrt{q} \rfloor - 1$, which also seems to be quite good. It is worth noting that to better estimate $d$ we actively use the theory of elliptic curves over finite fields.


\vspace{0.25cm}

{\bf Key words:} non-split toric codes, BCH codes, toric singular del Pezzo surfaces, reflexive polygons, elliptic curves, Griesmer bound, reversible (LCD) codes.

\section*{Introduction}
\addcontentsline{toc}{section}{Introduction}

This article continues our first one \cite{MyArticle2019} about {\it non-split toric codes}, i.e., algebraic geometry (AG) codes \cite{TsfasmanVladutNogin} on non-split toric varieties \cite{CoxLittleSchenck} over a finite field $\F{q}$. It is wonderful circumstance that most of these codes are (simple-root) cyclic \cite[Chapter 7]{MacWilliamsSloane}. Therefore they have more chances to be used in practice than other algebraic geometry codes on high-dimensional varieties. In \cite{MyArticle2019} we assume everywhere that toric varieties are smooth, however there are no any obstacles to consider non-split toric codes on singular ones.

There is the well known classification of toric (possibly singular) {\it del Pezzo surfaces} \cite{CorayTsfasman}. They bijectively correspond (up to an equivalence) to so-called {\it reflexive polygons} \cite[\S 8.3]{CoxLittleSchenck}. There are exactly $16$ such polygons \cite[Theorem 8.3.7]{CoxLittleSchenck}, but only $5$ of them (see Figure \ref{symmetricReflexivePolygons} and Table \ref{toricSurfaces}) are quite \grqq symmetric'', i.e., have an integral action of order greater than $2$. The last condition seems to be necessary for constructing good non-split toric codes.

Non-split toric codes $C_6, C_8, C_9$ (Tables \ref{ToricCodesOnDelPezzoSurfaces} and \ref{ParityCheckPolynomials}) associated with such smooth polygons (i.e., $Pol_6, Pol_8, Pol_9$) have already been considered, for example, in \cite[\S 2.3]{MyArticle2019} (also see \cite[\S 4.2]{Blache}), \cite[Proposition 4.7]{CouvreurDuursma}, and \cite[\S 2]{Lachaud1990} respectively. The other polygons $Pol_3, Pol_4$ correspond to some singular cubic (\S \ref{surfaceS3}) and quartic (\S \ref{surfaceS4}) del Pezzo surfaces respectively. As far as we know, algebraic geometry codes $C_3, C_4$ (Tables \ref{ToricCodesOnDelPezzoSurfaces} and \ref{ParityCheckPolynomials}) on the given surfaces have not been studied yet. However, AG codes on some smooth cubic and quartic del Pezzo surfaces are described in \cite[\S 6]{Blache}, \cite{Boguslavsky}, \cite[\S 4.1, \S 5.1]{LittleSchenck}, and \cite{Zarzar}. 

{\bf Acknowledgements.} The author expresses his deep gratitude to his scientific advisor M. Tsfasman and thanks A. Perepechko, A. Trepalin, and K. Shramov for their help and useful comments.

\tableofcontents

\section{Toric del Pezzo surfaces and reflexive polygons}

Let $\F{q}$ be a finite field of characteristic $p$. Consider a toric (possibly singular) {\it del Pezzo surface} $S$ over $\F{q}$, i.e., a toric one, whose anticanonical divisor $-K_S$ is an ample Cartier divisor. Let $\varphi_{min}\!: S^\prime \to S$ be the minimal resolution of singularities. The surface $S^\prime$ is a so-called {\it weak} (or {\it generalized}) {\it del Pezzo surface}. The self-intersection $K^2_S$ is said to be {\it degree} of $S$ (or $S^\prime$). Besides, the {\it Fano index} of $S$ is the maximal number $i \in \N$ such that $K_S \sim iH$ for some Cartier divisor $H$ on $S$, which can be taken over $\F{q}$. The theory of (not necessarily toric) del Pezzo surfaces (with more focus on $K^2_S = 3, 4$) can be found, for example, in \cite{CorayTsfasman}.

\begin{lemma}[\mbox{\cite[Proposition 11.2.8]{CoxLittleSchenck}, \cite[Proposition 0.6]{CorayTsfasman}, \cite[Figure 1]{Derenthal}}]
	\
	\begin{enumerate}
		\item $-K_S$ is very ample, $\dim|{-}K_S| = K^2_S$, and $3 \leqslant K^2_S \leqslant 9;$ 
		\item The surface $S$ may only have singularities of the types $\rmA_1, \rmA_2, \rmA_3$ {\rm\cite[Example 10.1.5]{CoxLittleSchenck};}
		\item $\varphi_{min}$ is a crepant morphism, i.e., $K_{S^\prime} := \varphi_{min}^*(K_S)$ is a canonical divisor.
	\end{enumerate}	
\end{lemma}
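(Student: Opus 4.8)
The plan is to convert each assertion into the combinatorics of the anticanonical polygon $P := P_{-K_S} \subset M_\Re$, which is reflexive, together with its polar dual $P^\circ \subset N_\Re$, whose face fan is the defining fan $\Sigma$ of $S$. Throughout I use the standard dictionary: since $-K_S$ is Cartier, $S$ is Gorenstein; the lattice points of $P$ index a basis of $H^0(S,-K_S)$, so $\dim|{-}K_S| = \#(P\cap M)-1$; and the primitive ray generators $u_\rho$ of $\Sigma$ are exactly the vertices of $P^\circ$.

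For item~1, ampleness of $-K_S$ is the definition of a del Pezzo surface, and I would upgrade it to very ampleness by invoking that every two-dimensional lattice polytope is normal, hence very ample; applied to $P$ this produces the projectively normal embedding $S \hookrightarrow \P^{\#(P\cap M)-1}$. To compute the degree I apply Pick's formula on $P$: reflexivity means the origin is the unique interior lattice point, so $\mathrm{Area}(P) = \tfrac12 b$ with $b := \#(\partial P\cap M)$. Then $\dim|{-}K_S| = \#(P\cap M)-1 = b$ while $K_S^2 = (-K_S)^2 = 2\,\mathrm{Area}(P) = b$, giving $\dim|{-}K_S| = K_S^2$. The bounds follow from the twelve-point theorem $\#(\partial P\cap M) + \#(\partial P^\circ\cap N) = 12$: each summand is at least $3$ (a lattice polygon has at least three vertices), so $3 \leqslant b \leqslant 9$.

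For item~2, every maximal cone $\sigma = \mathrm{Cone}(u,v)$ of $\Sigma$ is the cone over an edge $E$ of $P^\circ$. By reflexivity $E$ lies at lattice distance $1$ from the origin, i.e. $\langle m_E, \cdot\rangle \equiv 1$ on $E$ for a primitive $m_E \in M$, and the $\ell+1$ lattice points of $E$ (with $\ell$ its lattice length) subdivide $\sigma$ into $\ell$ smooth cones joined along $\ell-1$ exceptional $(-2)$-curves, so the point is Gorenstein of type $\rmA_{\ell-1}$. It remains to prove $\ell \leqslant 4$. After a $\GL_2(\Z)$ change I may assume $m_E = (0,1)$, so $E \subset \{y=1\}$ and $P^\circ \subset \{y\leqslant 1\}$; since the origin is interior, $P^\circ$ has a (lattice) vertex $Q$ with $y(Q)\leqslant -1$. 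By convexity $P^\circ \supseteq \mathrm{conv}(E\cup\{Q\})$, whose horizontal cross-section at $y=0$ has width $\ell\cdot\tfrac{h}{h+1}\geqslant \ell/2$, where $-h = y(Q)$. On the other hand the cross-section $P^\circ\cap\{y=0\}$ contains no lattice point besides the origin, so it lies in $[-1,1]$ and has width at most $2$. Combining gives $\ell\leqslant 4$, whence only $\rmA_1,\rmA_2,\rmA_3$ (besides the smooth $\rmA_0$) can occur.

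Item~3 is already contained in the resolution built above: $\varphi_{min}$ refines $\Sigma$ by inserting the rays through the interior lattice points $w$ of each edge $E$ of $P^\circ$, and every such $w$ again lies at lattice distance $1$ from the origin. Hence the support function of $-K_S$ stays linear through $w$, its discrepancy vanishes, and $K_{S'} = \varphi_{min}^*(K_S)$, so $\varphi_{min}$ is crepant; equivalently, $\rmA_n$-points are Du~Val, whose minimal resolutions are crepant. I expect the genuinely delicate step to be the width estimate bounding $\ell$ in item~2: one must check that the reflexivity normalisation really puts $E$ at height exactly $1$ and that the lower vertex $Q$ forces width $\geqslant \ell/2$ at $y=0$. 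This can alternatively be bypassed by direct inspection of the sixteen reflexive polygons.
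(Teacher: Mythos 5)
The paper gives no proof of this lemma at all: it is imported wholesale from \cite[Proposition 11.2.8]{CoxLittleSchenck}, \cite[Proposition 0.6]{CorayTsfasman} and \cite[Figure 1]{Derenthal}. Your self-contained derivation is therefore a genuinely different route, and it is essentially correct. You re-prove everything inside the reflexive-polygon dictionary: Pick's formula plus the twelve-point theorem (the same fact the paper records as $|P \cap \Z^2| + |P^\circ \cap \Z^2| = 14$) give $\dim|{-}K_S| = K_S^2 = \#(\partial P \cap \Z^2) \in [3,9]$; a lattice-width estimate bounds the lattice length of every edge of $P^\circ$ by $4$, so the cones over edges are at worst $\rmA_3$-singularities; and linearity of the support function of $-K_S$ across the inserted height-one rays gives crepancy of the toric minimal resolution (equivalently, $\rmA_n$-points are Du Val). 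What your approach buys is a uniform, elementary, purely toric proof that stays within the toolkit the paper already quotes; what the paper's citations buy is brevity, the non-toric generality of Coray--Tsfasman (their Proposition 0.6 concerns arbitrary Du Val del Pezzo surfaces), and, in Derenthal's case, the explicit classification --- which is exactly the ``inspect the sixteen polygons'' shortcut you mention at the end.

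One local repair is needed in your width argument. The claim that the section $P^\circ \cap \{y=0\}$ ``contains no lattice point besides the origin'' is false as stated: for the square $[-1,1]^2$ (the polygon $Pol_8$, which is reflexive and has its top edge at height $1$) the section is exactly $[-1,1] \times \{0\}$ and contains $(\pm 1, 0)$ as \emph{boundary} lattice points. What is true, and what you actually need, is only that the section is contained in $[-1,1]$: if it contained a point $(x,0)$ with $x > 1$, then $(1,0)$, lying strictly between the interior point $O$ and $(x,0)$, would be a second \emph{interior} lattice point of $P^\circ$, contradicting reflexivity (and symmetrically on the left). With this corrected justification the bound $\ell \cdot \tfrac{h}{h+1} \leqslant 2$, hence $\ell \leqslant 4$, goes through unchanged, and the rest of your proof stands.
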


A lattice convex polygon $P \subset \Re^2$ is said to be {\it reflexive} (or {\it Gorenstein}) if $O := (0,0)$ is its internal point and the dual (convex) polygon $P^\circ$ is also lattice. In this case, $P^\circ$ is obviously reflexive.

\begin{lemma}[\mbox{\cite[Exercise 2.3.5.a, Definition 2.3.12, Theorem 10.5.10]{CoxLittleSchenck}}]
	
	If $P$ is reflexive, then
	\begin{enumerate}
		\item $O$ is the unique internal point of $P;$
		\item All vertices of $P$ are ray generators of the normal fan of $P^\circ;$
		\item $|P \cap \Z^2| + |P^\circ \cap \Z^2| = 14$.
	\end{enumerate}
\end{lemma}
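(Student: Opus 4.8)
The plan is to work throughout with the standard equivalent description of reflexivity: $P$ is reflexive precisely when each of its edges lies on a line $\langle u, x\rangle = -1$ with $u \in \Z^2$ a primitive inner normal, i.e.\ every edge is at lattice distance $1$ from $O$, and dually the vertices of $P^\circ$ are exactly these primitive normals. For item~(1) I would argue as follows. Let $u_1,\dots,u_n$ be the primitive inner normals of the edges of $P$, so $P = \{x : \langle u_i, x\rangle \geqslant -1\}$. Since $O$ is interior to $P$, the fan of these rays is complete, hence $0 = \sum_i \lambda_i u_i$ for some $\lambda_i > 0$ and the $u_i$ span $\Re^2$. If $w \in \Z^2$ is any interior lattice point, then $\langle u_i, w\rangle > -1$, and integrality forces $\langle u_i, w\rangle \geqslant 0$ for every $i$. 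Pairing with $\sum_i \lambda_i u_i = 0$ gives $\sum_i \lambda_i \langle u_i, w\rangle = 0$, a sum of nonnegative terms, so each $\langle u_i, w\rangle = 0$; as the $u_i$ span, $w = O$.

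For item~(2) I would invoke the general face-reversing duality between $P$ and $P^\circ$: the normal fan of $P^\circ$ coincides with the face (spanning) fan of $P$, whose rays are the half-lines $\Re_{\geqslant 0} v$ through the vertices $v$ of $P$. It remains to check that each vertex $v$ is itself the ray generator, i.e.\ a primitive lattice vector. This is exactly the reflexivity of $P^\circ$ (automatic since $(P^\circ)^\circ = P$): the vertex $v$ of $P$ is dual to the edge $\{u \in P^\circ : \langle v, u\rangle = -1\}$ of $P^\circ$, whose primitive inner normal is $v$; as that edge is at lattice distance $1$, the normal $v$ is primitive. Hence the vertices of $P$ are precisely the ray generators of the normal fan of $P^\circ$.

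The substantive step is item~(3), which I would deduce from the toric geometry of the first lemma rather than from a direct lattice count. Let $S$ be the toric del Pezzo surface whose normal fan is that of $P$ (equivalently the face fan of $P^\circ$), so that $-K_S$ is the ample divisor with lattice polytope $P$ and $|P\cap\Z^2| = h^0(S,-K_S) = \dim|{-}K_S| + 1 = K_S^2 + 1$ by the first lemma. Passing to the minimal resolution $\varphi_{min}\!: S' \to S$, one refines the fan by inserting a ray at every boundary lattice point of $P^\circ$; reflexivity makes consecutive boundary points $w_i, w_{i+1}$ satisfy $\det(w_i, w_{i+1}) = 1$ (the triangle $O w_i w_{i+1}$ has base of lattice length $1$ and lattice height $1$), so $S'$ is smooth and complete with number of rays $B^\circ := |\partial P^\circ \cap \Z^2|$. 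A smooth complete toric surface is rational with $\chi(\O_{S'}) = 1$, and its topological Euler characteristic equals the number of maximal cones, namely $B^\circ$; Noether's formula $12\chi(\O_{S'}) = K_{S'}^2 + \chi_{top}(S')$ then yields $K_{S'}^2 = 12 - B^\circ$. Since $\varphi_{min}$ is crepant, $K_{S'}^2 = K_S^2$, whence $|P \cap \Z^2| = K_S^2 + 1 = 13 - B^\circ$. Finally, by item~(1) applied to the reflexive polygon $P^\circ$ its only interior lattice point is $O$, so $|P^\circ \cap \Z^2| = B^\circ + 1$, and adding the two counts gives $|P\cap\Z^2| + |P^\circ\cap\Z^2| = 14$.

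The main obstacle is the bookkeeping in item~(3): one must justify that the minimal resolution is exactly the refinement by all boundary lattice points of $P^\circ$ and that it is smooth (both resting on the distance-$1$ property), and then feed the correct invariants into Noether's formula. Items~(1) and~(2) are essentially formal consequences of polar duality once reflexivity is phrased through the distance-$1$ edges.
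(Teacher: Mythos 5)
Your proposal is correct, but there is nothing in the paper to compare it with: the lemma is quoted directly from Cox--Little--Schenck (Exercise 2.3.5.a, Definition 2.3.12, Theorem 10.5.10), so the paper's ``proof'' is the citation itself, and your argument is a genuine self-contained substitute. Items (1) and (2) are the standard polar-duality computations enabled by the distance-one characterization of reflexivity; the key points (integrality upgrading $\langle u_i, w\rangle > -1$ to $\langle u_i, w\rangle \geqslant 0$ in (1), and an edge of $P^\circ$ on the line $\langle v, x\rangle = -1$ forcing the normal $v$ to be primitive in (2)) are both handled correctly. Item (3) is in effect a proof of the classical ``12 theorem'' $|\partial P \cap \Z^2| + |\partial P^\circ \cap \Z^2| = 12$ via Noether's formula on a smooth crepant toric resolution, which is the same mechanism as in the cited source; the twist is that you route it through the paper's first lemma ($\dim|{-}K_S| = K_S^2$ and crepancy of $\varphi_{min}$), which is legitimate within the paper's logical order since that lemma precedes this one and does not depend on it. One simplification worth noting: the step you flag as the main obstacle --- identifying the refinement by all boundary lattice points of $P^\circ$ with the minimal resolution so as to invoke crepancy of $\varphi_{min}$ --- can be bypassed entirely, because your computation only needs \emph{some} smooth crepant resolution. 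Crepancy of the boundary refinement is a one-line support-function check: a boundary lattice point $w$ on the edge $[v_i, v_{i+1}]$ of $P^\circ$ is a convex combination of $v_i$ and $v_{i+1}$, so the piecewise-linear function of $-K_S$ equals $-1$ at $w$ and the corresponding exceptional divisor has discrepancy $0$. (Conversely, if you do want the identification with $\varphi_{min}$, it follows since any non-minimal resolution factors through blow-ups at smooth points, which carry positive discrepancy, so a crepant smooth resolution of a surface with du Val singularities is automatically minimal.)
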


\begin{theorem}[\mbox{\cite[Theorems 6.2.1, 8.3.4]{CoxLittleSchenck}}]
	The maps 
	$$
	P \mapsto (S_P, D_P) \quad \mbox{\rm \cite[\S 2.3, \S 4.2]{CoxLittleSchenck}},\qquad 
	(S, {-}K_S) \mapsto P_{\!{-}K_S} \quad \mbox{\rm \cite[\S 4.3]{CoxLittleSchenck}}
	$$ 
	are inverse to each other between reflexive polygons (up to an equivalence) and toric (possibly singular) del Pezzo surfaces provided with the anticanonical divisor that is the sum of all prime torus-invariant divisors.
\end{theorem}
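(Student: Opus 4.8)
The plan is to deduce the statement from the general correspondence between full-dimensional lattice polytopes and projective toric varieties equipped with an ample torus-invariant divisor \cite[Theorem 6.2.1]{CoxLittleSchenck}, and then, under reflexivity, to identify the distinguished divisor $D_P$ with the anticanonical one $-K_{S_P}$. First I would recall the two constructions. Given a reflexive polygon $P \subset M_\Re \cong \Re^2$, its normal fan $\Sigma_P$ is complete (since $P$ is full-dimensional and bounded), and by Lemma 2 (applied to the reflexive polygon $P^\circ$, for which $(P^\circ)^\circ = P$) its rays are generated exactly by the vertices $u_\rho$ of the dual polygon $P^\circ$. This produces a complete toric surface $S_P := X_{\Sigma_P}$. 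Each facet of $P$ lies on a supporting hyperplane $\langle m, u_\rho \rangle = -a_\rho$ with $a_\rho \in \N$, the associated divisor is $D_P = \sum_\rho a_\rho D_\rho$ with $D_\rho$ the prime torus-invariant divisor attached to $u_\rho$, and the polytope of $D_P$ is recovered as $\{ m \in M_\Re : \langle m, u_\rho \rangle \geqslant -a_\rho \text{ for all } \rho \} = P$.

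The key step is the observation that reflexivity is equivalent to all facets lying at lattice distance exactly $1$ from $O$, i.e. $a_\rho = 1$ for every $\rho$: indeed $P^\circ = \{u \in N_\Re : \langle m, u \rangle \geqslant -1 \text{ for all } m \in P\}$ is lattice precisely when the primitive facet normals $u_\rho$ of $P$ are the vertices of $P^\circ$, which forces $a_\rho = 1$. Consequently $D_P = \sum_\rho D_\rho$, which is exactly the anticanonical divisor $-K_{S_P}$. That $-K_{S_P}$ is Cartier reduces, in dimension $2$, to producing for each maximal cone $\sigma = \mathrm{Cone}(u_{\rho_1}, u_{\rho_2})$ of $\Sigma_P$ an element $m_\sigma \in M$ with $\langle m_\sigma, u_{\rho_i}\rangle = -1$; such $m_\sigma$ is the negative of the corresponding vertex of $P$ and lies in $M$ because $P$ is a lattice polygon. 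Ampleness is immediate from full-dimensionality of $P$ via \cite[Theorem 6.2.1]{CoxLittleSchenck}. Hence $(S_P, D_P)$ is a toric del Pezzo surface carrying its anticanonical polarization.

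For the converse, starting from a toric del Pezzo surface $S$ I would take $-K_S = \sum_\rho D_\rho$, which is ample and Cartier by hypothesis, and form its polytope $P_{-K_S} = \{m \in M_\Re : \langle m, u_\rho\rangle \geqslant -1\}$. Cartier-ampleness makes this a full-dimensional lattice polygon, and its facets sit at lattice distance $1$ from $O$, so $O$ is an interior point and $(P_{-K_S})^\circ$ is lattice; thus $P_{-K_S}$ is reflexive. Finally, that the two assignments are mutually inverse (up to lattice equivalence) is the specialization of the bijection of \cite[Theorem 6.2.1]{CoxLittleSchenck} to the reflexive/anticanonical case, once $D_P$ has been matched with $-K_{S_P}$ in both directions.

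I expect the only genuine difficulty to be the translation between the combinatorial reflexivity condition and the geometric statement that the distinguished ample divisor coincides with $-K$; the underlying polytope--variety dictionary \cite[Theorem 6.2.1]{CoxLittleSchenck}, including completeness, ampleness, and recovery of the defining polytope, may be invoked wholesale and carries the remaining weight.
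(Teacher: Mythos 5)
The paper offers no proof of this theorem at all---it is quoted directly from \cite[Theorems 6.2.1, 8.3.4]{CoxLittleSchenck}---and your proposal reconstructs exactly the standard argument behind those citations (the polytope--divisor dictionary of Theorem 6.2.1 combined with the observation that reflexivity is equivalent to all facet constants $a_\rho$ equalling $1$, i.e.\ $D_P=\sum_\rho D_\rho=-K_{S_P}$), so your approach coincides with the intended justification. The only slip is a sign in the Cartier data: with your facet presentation $\langle m, u_\rho\rangle \geqslant -a_\rho = -1$, the element $m_\sigma$ attached to the maximal cone dual to a vertex $v$ of $P$ is $v$ itself (since $\langle v, u_\rho\rangle = -1$ for the facets through $v$), not its negative.
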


\begin{theorem}[\mbox{\cite[Theorem 8.3.7]{CoxLittleSchenck}, \cite[Figure 1]{Derenthal}}]
	Up to an equivalence (isomorphism) there are exactly $16$ reflexive polygons (split toric del Pezzo surfaces).
\end{theorem}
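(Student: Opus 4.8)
The plan is to classify reflexive polygons up to the equivalence relation generated by $\GL_2(\Z)$ together with lattice translations. By Lemma 2(1) a reflexive polygon $P$ has the origin $O$ as its \emph{unique} interior lattice point, so after translating I may assume this point is $O$; any affine-unimodular equivalence between two such normalized polygons must carry the unique interior lattice point of one to that of the other, hence fixes $O$ and lies in $\GL_2(\Z)$. Thus I reduce to classifying, up to the linear action of $\GL_2(\Z)$, the lattice polygons whose only interior lattice point is $O$. Here I would invoke the standard two-dimensional fact that reflexivity is equivalent to every edge $E$ of $P$ lying on an affine line $\langle \cdot, u_E\rangle = -1$ with $u_E$ a primitive inner normal, and that in the plane this lattice-height-one condition is automatic once $O$ is the only interior point; this identifies "reflexive'' with "single interior lattice point'' and fixes the search space cleanly.

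Next I would pin down a priori size bounds. Writing $B = |\partial P \cap \Z^2|$ and $B^\circ = |\partial P^\circ \cap \Z^2|$, Pick's theorem applied to $P$ (with a single interior point) gives $\mathrm{Area}(P) = B/2$, while Lemma 2(3) together with $|P \cap \Z^2| = 1 + B$ and $|P^\circ \cap \Z^2| = 1 + B^\circ$ yields $B + B^\circ = 12$. Since $P^\circ$ is again a genuine reflexive polygon, $B^\circ \geq 3$, so $3 \leq B \leq 9$ and $\mathrm{Area}(P) \leq 9/2$. The same relation controls the number of vertices: duality interchanges the vertices and edges of $P$, so the vertex count $v$ equals the number of vertices of $P^\circ$; as $B \geq v$ and $B^\circ \geq v$, we get $2v \leq B + B^\circ = 12$, i.e. $v \leq 6$, with equality forcing the hexagon $Pol_6$. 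I would also record that every vertex of $P$ is primitive (a non-primitive vertex $v = 2w$ would put the lattice point $w \neq O$ strictly inside $P$).

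With these bounds the classification becomes a finite search. After using $\GL_2(\Z)$ to place one edge on the line $y=-1$, the polygon lies in the half-plane $y \geq -1$; because every edge sits at lattice distance $1$ from $O$, the primitive inner normals are the vertices of the reflexive polygon $P^\circ$, which has bounded area, so its vertices lie in an explicit box, and dualizing back confines all vertices of $P = (P^\circ)^\circ$ to a bounded region as well. I would then enumerate the lattice polygons in this box having $O$ as their unique interior point, organizing the case analysis by the value of $B$ (equivalently the area) and by $v$, and reduce the resulting list modulo the residual $\GL_2(\Z)$-symmetries.

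The hard part will be the bookkeeping of this final enumeration: simultaneously guaranteeing that the search is exhaustive and that the surviving polygons are pairwise inequivalent under $\GL_2(\Z)$. To detect equivalences efficiently I would attach to each candidate a $\GL_2(\Z)$-invariant signature — for instance the pair $(B, B^\circ)$ refined by the cyclically ordered multiset of edge lattice lengths of $P$ and of $P^\circ$ (equivalently, the pattern of smooth versus singular vertices) — and declare two candidates equivalent only after exhibiting an explicit unimodular map between them. Carrying this out, by hand through the case analysis above or by a short machine verification, produces exactly the $16$ classes, among which the five "symmetric'' ones $Pol_3, Pol_4, Pol_6, Pol_8, Pol_9$ are the polygons relevant to the present paper.
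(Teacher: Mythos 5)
The paper itself offers no proof of this theorem: it is imported wholesale by citation from \cite[Theorem 8.3.7]{CoxLittleSchenck} and \cite[Figure 1]{Derenthal}. So your attempt has to stand on its own, and its skeleton is indeed the standard classification route: translating the unique interior lattice point (Lemma 2(1)) to $O$; the genuinely two-dimensional equivalence between ``unique interior lattice point'' and reflexivity; Pick's theorem giving $\mathrm{Area}(P)=B/2$; Lemma 2(3) giving $B+B^\circ=12$, hence $3\leqslant B\leqslant 9$ and $v\leqslant 6$; and primitivity of vertices. None of this is circular --- the ``number $12$'' identity is proved in the cited source by toric Noether's formula, not by the enumeration you are trying to reconstruct --- so these reductions are all legitimate.

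There are, however, two genuine gaps. First, your finiteness step is false as stated: bounded area does \emph{not} confine the vertices of $P^\circ$ (or of $P$) to an explicit box. For every $N$ the parallelogram $P_N=\mathrm{conv}\{(N,1),(N-1,1),(-N,-1),(-N+1,-1)\}=\{|y|\leqslant 1\}\cap\{|2x-(2N-1)y|\leqslant 1\}$ has $O$ as its unique interior lattice point, has an edge on the line $y=-1$, and has area $2$, yet its vertices, and those of its dual $\mathrm{conv}\{(0,\pm 1),\pm(2,-(2N-1))\}$ (which has area $4$), escape to infinity with $N$. All the $P_N$ are $\GL(2,\Z)$-equivalent via the shears $(x,y)\mapsto(x+ky,y)$, and that is exactly the point: after you place an edge on $y=-1$, the residual stabilizer of that line inside $\GL(2,\Z)$ is an infinite group of shears, and it is this group action --- not any area bound --- that must be used to bring the polygon into a bounded region. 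This reduction is missing, so your search space is still infinite. Second, even granting a corrected bounded search space, the conclusion ``exactly $16$'' is precisely the output of the enumeration, which you never perform: exhaustiveness of the list and pairwise inequivalence of its members are asserted (``carrying this out \dots\ produces exactly the $16$ classes''), yet they constitute the entire content of the theorem. As written, your proposal is a reduction of the statement to a finite check, with one incorrect lemma inside the reduction and with the check itself replaced by the claim it is supposed to establish.
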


It is immediately checked that all reflexive polygons having an action of order greater than $2$ are represented in Figure \ref{symmetricReflexivePolygons}. For a polygon $Pol_i$ the subscript $i$ is the amount of integral points on its boundary. In turn, the superscript $t$ denotes the transposition operation of $\varPhi_{i}$ as a matrix (of order $i$) in $\GL(2,\Z)$. The corresponding non-split toric del Pezzo surfaces (with their Fano index) are contained in Table \ref{toricSurfaces}. We recall that the action $\varPhi_{i}$ (or $\varPhi_{i}^{\,t}$) complies with the Frobenius action on toric invariant curves and points of the surface. Finally, it is notable that all the five surfaces have Picard $\F{q}$-number $1$. 

\begin{figure}
	\centering 
	
	\begin{minipage}{0.49\linewidth}
		\begin{minipage}{0.49\linewidth}
			\centering
			
			\begin{tikzpicture}
			\draw[step = 1, style = help lines, densely dotted] (-1.5, -1.5) grid (1.5, 1.5);
			
			\filldraw[very thick, fill = gray!10!white] (1, 0) -- (0, 1) -- (-1, -1) -- (1, 0);
			
			\begin{scope}[thin, gray]
			\draw[->] (-1.5, 0) -- (1.5, 0) node[above] {\hspace{-0.5cm}$1$};
			\draw[->] (0, -1.5) -- (0, 1.5) node[right] {\raisebox{-0.7cm}{$1$}};
			\end{scope}
			
			\fill (0, 0) node[above] {\hspace{0.5cm}$O$} circle (0.04);	
			
			\begin{scope}[red, densely dashed]
			\draw[->-=.7] (0, 1) arc (90: 0: 1);		
			\draw[->-=.7] (1, 0) .. controls (0.5, -1.25) and (-0.5, -1.25) .. (-1, -1);
			\draw[->-=.7] (-1, -1) .. controls (-1.25, -0.5) and (-1.25, 0.5) .. (0, 1);		
			\end{scope}		
			\end{tikzpicture}
			\caption*{($Pol_3$, $\varPhi_3^{\,t}$)}
			
		\end{minipage}
		\begin{minipage}{0.49\linewidth}
			\centering
			
			\begin{tikzpicture}
			\draw[step = 1, style = help lines, densely dotted] (-1.5, -1.5) grid (1.5, 1.5);
			
			\filldraw[very thick, fill = gray!10!white] (1, 0) -- (0, 1) -- (-1, 0) -- (0, -1) -- (1, 0);
			
			\begin{scope}[thin, gray]
			\draw[->] (-1.5, 0) -- (1.5, 0) node[above] {\hspace{-0.5cm}$1$};
			\draw[->] (0, -1.5) -- (0, 1.5) node[right] {\raisebox{-0.7cm}{$1$}};
			\end{scope}
			
			\fill (0, 0) node[below] {\hspace{0.5cm}$O$} circle (0.04);	
			
			\begin{scope}[red, densely dashed]
			\draw[->-=.7] (0, 1) arc (90: 0: 1);
			\draw[->-=.7] (1, 0) arc (0: -90: 1);
			\draw[->-=.7] (0, -1) arc (-90: -180: 1);
			\draw[->-=.7] (-1, 0) arc (-180: -270: 1);
			\end{scope}		
			\end{tikzpicture}
			\caption*{($Pol_4$, $\varPhi_{4}^{\,t}$)}	
			
		\end{minipage}
		
		\vspace{0.5cm}
		
		\begin{minipage}{0.49\linewidth}
			\centering
			
			\begin{tikzpicture}
			\draw[step = 1, style = help lines, densely dotted] (-1.5, -1.5) grid (1.5, 1.5);
			\filldraw[very thick, fill = gray!10!white] (1, 0) -- (0, 1) -- (-1, 1) -- (-1, 0) -- (0, -1) -- (1, -1) -- (1, 0) -- (1, 0);
			
			\begin{scope}[thin, gray]
			\draw[->] (-1.5, 0) -- (1.5, 0) node[above] {\hspace{-0.5cm}$1$};
			\draw[->] (0, -1.5) -- (0, 1.5) node[right] {\raisebox{-0.7cm}{$1$}};
			\end{scope}
			
			\begin{scope}[red, densely dashed]
			\draw[->-=.7] (1, 0) arc (0: 90: 1);
			\draw[->-=.7] (0, 1) .. controls (-0.25, 1.35) and (-0.75, 1.35) .. (-1, 1);
			\draw[->-=.7] (-1, 1) .. controls (-1.35, 0.75) and (-1.35, 0.25) .. (-1, 0);
			\draw[->-=.7] (-1, 0) arc (180: 270: 1);
			\draw[->-=.7] (0, -1) .. controls (0.25, -1.35) and (0.75, -1.35) .. (1, -1);
			\draw[->-=.7] (1, -1) .. controls (1.35, -0.75) and (1.35, -0.25) .. (1, 0);
			\end{scope}	
			
			\fill (0, 0) node[below] {\hspace{0.5cm}$O$} circle (0.04);
			\end{tikzpicture}
			\caption*{($Pol_6$, $\varPhi_6$)}
			
		\end{minipage}
		\begin{minipage}{0.49\linewidth}
			\centering
			
			\begin{tikzpicture}
			\draw[step = 1, style = help lines, densely dotted] (-1.5, -1.5) grid (1.5, 1.5);
			
			\filldraw[very thick, fill = gray!10!white] (1, -1) -- (1, 1) -- (-1, 1) -- (-1, -1) -- (1, -1);
			
			\begin{scope}[thin, gray]
			\draw[->] (-1.5, 0) -- (1.5, 0) node[above] {\hspace{-0.5cm}$1$};
			\draw[->] (0, -1.5) -- (0, 1.5) node[right] {\raisebox{-0.7cm}{$1$}};
			\end{scope}
			
			\fill (0, 0) node[below] {\hspace{0.5cm}$O$} circle (0.04);	
			
			\begin{scope}[red, densely dashed]
			\draw[->-=.8] (1, 1) .. controls (0.5, 1.5) and (-0.5, 1.5) .. (-1, 1);
			\draw[->-=.8] (-1, 1) .. controls (-1.5, 0.5) and (-1.5, -0.5) .. (-1, -1);
			\draw[->-=.8] (-1, -1) .. controls (-0.5, -1.5) and (0.5, -1.5) .. (1, -1);
			\draw[->-=.8] (1, -1) .. controls (1.5, -0.5) and (1.5, 0.5) .. (1, 1);
			\draw[->-=.7] (1, 0) -- (0, 1);
			\draw[->-=.7] (0, 1) -- (-1, 0);
			\draw[->-=.7] (-1, 0) -- (0, -1);
			\draw[->-=.7] (0, -1) -- (1, 0);
			\end{scope}		
			\end{tikzpicture}
			\caption*{($Pol_8$, $\varPhi_{4}$)}	
			
		\end{minipage}	
	\end{minipage}	
	\begin{minipage}{0.49\linewidth}
		\centering
		
		\begin{tikzpicture}
		
		\begin{scope}[thin, gray]
		\draw[->] (-1.5, 0) -- (2.5, 0) node[above] {\hspace{-0.5cm}$1$};
		\draw[->] (0, -1.5) -- (0, 2.5) node[right] {\raisebox{-0.7cm}{$1$}};
		\end{scope}
		\filldraw[very thick, fill = gray!10!white] (-1, 2) -- (2, -1) -- (-1, -1) -- (-1, 2);
		\draw[step = 1, style = help lines, densely dotted] (-1.5, -1.5) grid (2.5, 2.5);
		
		\begin{scope}[red, densely dashed]
		\draw[->-=.7] (2, -1) arc (0: 90: 3);
		\draw[->-=.6] (-1, 2) .. controls (-1.5, 1) and (-1.5, 0) .. (-1, -1);
		\draw[->-=.6] (-1, -1) .. controls (0, -1.5) and (1, -1.5) .. (2, -1);
		\draw[->-=.55] (1, 0) -- (-1, 1);
		\draw[->-=.55] (-1, 1) -- (0, -1);
		\draw[->-=.55] (0, -1) -- (1, 0);
		\end{scope}	
		
		\begin{scope}[blue, densely dashed]
		\draw[->-=.55] (0, 1) -- (-1, 0);
		\draw[->-=.55] (-1, 0) -- (1, -1);
		\draw[->-=.55] (1, -1) -- (0, 1);
		\end{scope}		
		
		\fill (0, 0) node[below] {\hspace{0.5cm}$O$} circle (0.04);
		\end{tikzpicture}
		\caption*{($Pol_9$, $\varPhi_3$)}
	\end{minipage}	
	
	\caption{Reflexive polygons having an action (from \cite[Theorem 8]{MyArticle2019}) of order greater than $2$}
	\label{symmetricReflexivePolygons}
\end{figure}	

\begin{table}[h]
	\centering
	\begin{tabular}{c|c|l|c|l}
		\textnumero & (polygon, action) & toric surface & Fano index & \multicolumn{1}{c}{(polygon$^\circ$, action$^{t}$)} \\ \hline\hline 
		
		1 & ($Pol_3$, $\varPhi_3^{\,t}$) & $S_3$ (\S \ref{surfaceS3}) & \multirow{3}{*}{$1$} & 5 \\ \cline{1-3}\cline{5-5}
		2 & ($Pol_4$, $\varPhi_{4}^{\,t}$) & $S_4$ (\S \ref{surfaceS4}) & & 4 \\ \cline{1-3}\cline{5-5} 
		3 & ($Pol_6$, $\varPhi_6$) & $\D_6$ \cite[\S 2.5]{MyArticle2019} & & 3 (up to an equivalence) \\ \hline 
		4 & ($Pol_8$, $\varPhi_{4}$) & $\EQS$ \cite[\S 2.4]{MyArticle2019} & $2$ & 2 \\ \hline 
		5 & ($Pol_9$, $\varPhi_3$) & $\P^2$ & $3$ & 1
		
	\end{tabular}
	\caption{Toric del Pezzo surfaces with respect to the tori $T_3, T_4, T_6$.}
	\label{toricSurfaces}
\end{table}

\begin{lemma}[\mbox{\cite[Proposition 4.2.5, Exercises 4.3.2, 10.5.7.b]{CoxLittleSchenck}}] \label{toricProperties}
	We have:
	\begin{enumerate} 
		\item \label{Fanoindex} For $K^2_S \leqslant 7$ the Fano index of $S$ is equal to $1;$
		\item Any smooth irreducible curve from $|{-}K_S|$ is elliptic;
		\item $\Pic(S)$ is a free abelian group.
	\end{enumerate}
\end{lemma}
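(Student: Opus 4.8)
The plan is to prove the three items separately, in each case transporting the question to the smooth weak del Pezzo surface $S^\prime$ via the crepant resolution $\varphi_{min}\colon S^\prime \to S$ (item~3 of the first lemma), where adjunction and Riemann--Roch take their classical form; the key inputs are that $-K_S$ is an ample Cartier divisor with $(-K_S)^2 = K^2_S$ and that $S$ is Gorenstein with Du~Val (canonical) singularities.

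I would dispose of item~2 first, as it is pure adjunction. Let $C \in |{-}K_S|$ be smooth and irreducible. Since $-K_S$ is Cartier on the Gorenstein surface $S$, the adjunction formula applies and gives $2g(C) - 2 = C\cdot(C + K_S) = (-K_S)\cdot 0 = 0$, so $g(C) = 1$; equivalently $\omega_C \cong \O_C$. Over $\F{q}$ a smooth curve of genus $1$ necessarily has a rational point, because $\#C(\F{q}) \geqslant q + 1 - 2\sqrt{q} = (\sqrt{q} - 1)^2 > 0$, so $C$ is genuinely an elliptic curve.

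For item~1 I would write $-K_S \sim iH$ with $H$ Cartier and $i$ maximal. Since $-K_S$ is ample and $i > 0$, the class $H$ is ample as well, whence $H^2$ is a positive integer and $K^2_S = (-K_S)^2 = i^2 H^2$; in particular $i^2 \mid K^2_S$. For the squarefree degrees $K^2_S \in \{3,5,6,7\}$ this forces $i = 1$ immediately, and the only borderline case with $K^2_S \leqslant 7$ is $K^2_S = 4$, where a priori $i \in \{1,2\}$. To exclude $i = 2$ I would use that Riemann--Roch must return an integer: if $i = 2$ then $H^2 = 1$ and $H\cdot K_S = -2H^2 = -2$, so on $S$ (equivalently on $S^\prime$, using crepancy and that $\varphi_{min}^*H$ is an honest divisor)
\[
\chi(\O_S(H)) = \chi(\O_S) + \tfrac12\,H\cdot(H - K_S) = 1 + \tfrac12\,(H^2 - H\cdot K_S) = 1 + \tfrac32 = \tfrac52 \notin \Z,
\]
a contradiction; hence $i = 1$.

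Item~3 I would derive from the toric description of Cartier divisors on the complete surface $S = S_P$ (its normal fan, being the normal fan of a full-dimensional polygon, is complete). A torus-invariant Cartier divisor is recorded by local data, one character $m_\sigma \in M$ per maximal cone of the fan, compatible along shared edges; quotienting by the globally linear (principal) data embeds $\Pic(S)$ into a direct sum of finitely many copies of the free lattice $M$. A subgroup of a finitely generated free abelian group is free, so $\Pic(S)$ is free abelian. The one genuinely delicate point in the whole proof is the borderline degree $K^2_S = 4$ of item~1: the crude estimate $i \leqslant \sqrt{K^2_S}$ alone does not rule out $i = 2$, and I must fall back on the integrality of $\chi(\O_S(H))$, which itself rests on the validity of Riemann--Roch on the singular surface $S$ — guaranteed by the canonical (Du~Val, hence rational and Gorenstein) nature of its singularities and the crepancy of $\varphi_{min}$ from the first lemma.
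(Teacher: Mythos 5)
The paper offers no proof of this lemma at all: it is imported verbatim from Cox--Little--Schenck (Proposition 4.2.5, Exercises 4.3.2, 10.5.7.b), so your self-contained argument is necessarily a different route, and it is essentially correct. Your item 2 (adjunction for the Cartier divisor $C \sim -K_S$ on the Gorenstein surface giving $\omega_C \cong \O_C$, plus Hasse--Weil to produce the rational point that makes the genus-one curve genuinely elliptic over $\F{q}$) and your item 3 (Cartier data $\{m_\sigma\}$, one character per full-dimensional cone of the complete fan, modulo the diagonal copy of $M$, embedding $\Pic$ into a finitely generated free abelian group) are exactly the arguments of the cited source; item 3 in particular reconstructs CLS's own proof of Proposition 4.2.5. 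Your item 1 is the most substantive addition: the divisibility $i^2 \mid K^2_S$ kills the squarefree degrees $3,5,6,7$ at once, and the parity obstruction ($H\cdot(H-K_S)=3$ odd, contradicting integrality of the Euler characteristic) correctly disposes of the genuinely borderline degree $4$; phrasing it on the crepant resolution, where $(\varphi_{min}^*H)^2 + \varphi_{min}^*H\cdot K_{S^\prime} = -1$ violates the evenness forced by the genus formula on a smooth surface, makes this airtight without ever invoking Riemann--Roch on the singular surface itself, which is the one step you rightly flagged as delicate. A single caveat: your toric argument in item 3 computes the Picard group of the split form (equivalently, the geometric Picard group), whereas the paper later applies freeness to the $\F{q}$-Picard groups of the non-split surfaces $S_3$, $S_4$; to cover that use you should add the standard remark that for a proper, geometrically integral variety the restriction map $\Pic(S) \to \Pic(S_{\overline{\F{q}}})$ is injective (Hilbert 90), so freeness passes to the subgroup of classes defined over $\F{q}$.
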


\subsection{Toric (singular) cubic surface in $\P^3$} \label{surfaceS3}

Choose an element $\alpha \in \F{q^3} \setminus \F{q}$ and consider the so-called {\it norm cubic $\F{q}$-surface} 
$$
S_3\!: X_0\!\cdot\! X_1\!\cdot\! X_2 = x_3^3 \quad \subset \quad \P^3_{(x_0:x_1:x_2:x_3)} \qquad \textrm{\cite[Example 1.3.10]{BatyrevTschinkel}},
$$
where 
$$
X_0 := x_0 + \alpha x_1 + \alpha^2 x_2,\qquad
X_1 := x_0 + \alpha^q x_1 + \alpha^{2q} x_2,\qquad
X_2 := x_0 + \alpha^{q^2} x_1 + \alpha^{2q^2} x_2.
$$
For $i \in \Z/3$ let
$$
\begin{array}{ll}
\h{L}_i \!: X_i = x_3 = 0,\qquad\qquad & \h{P}_i := \h{L}_{i+1} \cap \h{L}_{i+2}, \\
L_i := pr(\h{L}_i),\qquad\qquad & P_i := pr(\h{P}_i) = L_{i+1} \cap L_{i+2},
\end{array}
$$
where $pr\!: S_3 \to \P^2_{(x_0:x_1:x_2)}$ is the well-defined projection of degree $3$. Finally, let 
$$
\h{\bfL}_3 := \sum_{i=0}^2\h{L}_i, \qquad \bfL_3 := \sum_{i=0}^2 L_i, \qquad \mathrm{and} \qquad \bfP_{\!3} := \{P_i\}_{i=0}^2.
$$ 

\begin{remark}
	The surface $S_3$ is toric with respect to the torus \mbox{$T_3 \simeq S_3 \setminus \{x_3 = 0\}$} $($see {\rm\cite[Theorem 8]{MyArticle2019})} and the lines $\widehat{L}_i$ $($resp. $\widehat{P}_i)$ are the unique $T_3$-invariant curves $($resp. points$)$ on $S_3$. Moreover, they are $\F{q}$-conjugate.
\end{remark}

\begin{lemma}[\mbox{\cite[Table 7]{Derenthal}}]
	We have:
	\begin{enumerate} 
		\item The points $\widehat{P}_i$ are the unique singularities on $S_3$ $($of type $\rmA_2);$
		\item $\varphi_{min}\!: S_3^\prime \to S_3$ is the simultaneous blowing up at them;
		\item $\widehat{L}_i$ are the unique lines on $S_3$. 
	\end{enumerate}	
\end{lemma}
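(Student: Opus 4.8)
The plan is to reduce all three assertions to the geometrically diagonal cubic $Y_0Y_1Y_2 = Y_3^3$. Since $\alpha$ has degree $3$ over $\F{q}$, the conjugates $\alpha,\alpha^q,\alpha^{q^2}$ are pairwise distinct, so the Vandermonde matrix carrying $(x_0,x_1,x_2)$ to $(X_0,X_1,X_2)$ is invertible over $\F{q^3}$. Putting $Y_i := X_i$ for $i\in\Z/3$ and $Y_3 := x_3$ gives a linear change of coordinates over $\overline{\F{q}}$ after which $S_3 = \{Y_0Y_1Y_2 = Y_3^3\}$, with $\h{L}_i = \{Y_i = Y_3 = 0\}$ and $\h{P}_i$ equal to the coordinate point whose only nonzero coordinate is $Y_i$. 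All three statements are invariant under this change, so it suffices to treat the diagonal model.

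For items (1) and (2) I would work with $F := Y_0Y_1Y_2 - Y_3^3$. The vanishing of $\partial_{Y_0}F, \partial_{Y_1}F, \partial_{Y_2}F$ (that is, of $Y_1Y_2, Y_0Y_2, Y_0Y_1$) forces at least two of $Y_0,Y_1,Y_2$ to be zero, whence $F=0$ gives $Y_3=0$ — directly from $\partial_{Y_3}F = -3Y_3^2$ when $p\neq 3$, and from $F=0$ itself when $p=3$. This singles out exactly the three points $\h{P}_i$ as the singular locus. Dehomogenizing at $\h{P}_0$ (set $Y_0=1$) gives the local equation $Y_1Y_2 = Y_3^3$, i.e. $uv = w^3$, a rational double point of type $\rmA_2$. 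To resolve it I would blow up the point once and compute the strict transform in the chart $u = wa,\ v = wb$: from $w^2ab - w^3 = w^2(ab - w)$ one gets the smooth graph $w = ab$, whose exceptional locus $\{w = ab = 0\}$ is a transversal union of two rational curves; the remaining charts are smooth as well. Each exceptional curve is a $(-2)$-curve (by adjunction together with the crepancy of resolving a Du Val point), so no $(-1)$-curve occurs and the blow-up is already minimal, i.e. it is $\varphi_{min}$; doing this simultaneously at the three distinct points $\h{P}_i$ proves (2). The step I expect to need the most care is precisely this one: the type must be exactly $\rmA_2$ and a \emph{single} blow-up must suffice. What makes it work is that $uv = w^{n+1}$ transforms into $ab = w^{\,n-1}$, which is still singular for $n\geqslant 3$ but smooth for $n=2$; the case $p=3$ (where $3\mid n{+}1$) is the only delicate one, but the above chart computation is characteristic-free and exhibits the $\rmA_2$ dual graph in every characteristic.

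For item (3) I would classify the lines directly on the diagonal model. A line $\ell\subset\{Y_3=0\}$ lies on $S_3\cap\{Y_3=0\} = \h{L}_0\cup\h{L}_1\cup\h{L}_2$, the coordinate triangle, hence equals one of the $\h{L}_i$. Otherwise $\ell$ meets $\{Y_3=0\}$ in a single point $P$; writing $\ell$ as $[s:t]\mapsto Ps+Qt$ with $P_3=0$ and $Q_3=1$ and substituting, the identity of cubic forms
\[
(P_0 s + Q_0 t)(P_1 s + Q_1 t)(P_2 s + Q_2 t) = t^3
\]
must hold. The coefficient of $t^3$ gives $Q_0Q_1Q_2 = 1$, so no $Q_i$ vanishes; the coefficient of $s^3$ gives $P_0P_1P_2 = 0$, say $P_0=0$; the coefficient of $s^2t$ is then $Q_0P_1P_2$, forcing $P_1P_2=0$, say $P_1=0$; and the coefficient of $st^2$ is then $Q_0Q_1P_2$, forcing $P_2=0$. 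Thus $P=0$, which is not a point of $\P^3$ — a contradiction. Hence the $\h{L}_i$ are the only lines on $S_3$, which completes the proof.
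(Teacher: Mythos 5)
Your proposal is correct, but it takes a genuinely different route from the paper: the paper offers no proof of this lemma at all, quoting it from Derenthal's classification data for singular del Pezzo surfaces (Table 7 of that reference), with the passage from the split to the non-split form handled implicitly, in the spirit of the proof of Theorem \ref{theSurfaceS3}. You instead verify everything by hand: after the Vandermonde coordinate change over $\F{q^3}$ the surface becomes the diagonal cubic $Y_0Y_1Y_2=Y_3^3$, and since singular points, their types, lines, and the minimal resolution are all geometric notions (stable under field extension and linear changes of coordinates), it suffices to work with that model. Your Jacobian computation is right, including the point that needs care in characteristic $3$, where $\partial_{Y_3}F$ vanishes identically and one must invoke $F=0$ instead; the one-blow-up resolution of $uv=w^3$ with exceptional locus two transversal rational curves is the correct characteristic-free way to pin down the type $\rmA_2$; and the coefficient-by-coefficient elimination showing that every line on the surface lies in the plane $\{Y_3=0\}$, hence is one of the $\h{L}_i$, is airtight. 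What your argument buys is self-containedness and uniformity in $p$ (including $p=3$, precisely the case where one might worry about the citation to a classification stated over $\overline{\F{q}}$); what the citation buys is brevity and the additional configuration data recorded in Derenthal's table. The only step you should tighten is the claim that the exceptional curves are $(-2)$-curves ``by adjunction together with crepancy'': invoking crepancy of Du Val resolutions to prove minimality has a circular flavor here. A direct substitute: $E_1+E_2$ is the restriction to the strict transform of the exceptional plane $E\simeq\P^2$ of the ambient blow-up, so $E_i\cd(E_1+E_2)=\deg \O_{\P^2}(-1)|_{E_i}=-1$, and combining with $E_1\cd E_2=1$ gives $E_i^2=-2$; then minimality follows since no exceptional curve is a $(-1)$-curve.
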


\begin{theorem}[\mbox{\cite[Exercise 8.3.8.c]{CoxLittleSchenck}, \cite[Example 0.7.b]{CorayTsfasman}, \cite[Table 7]{Derenthal}, \cite[Example 1.3.10]{BatyrevTschinkel}}] \label{theSurfaceS3}
	\ 
	\begin{enumerate}
		\item $S_3$ is the unique $($up to an $\F{q}$-isomorphism$)$ toric del Pezzo surface of degree $3$ with respect to the torus $T_3;$
		\item $S_3$ is the non-split toric surface associated with the pair $(Pol_3, \varPhi_3^{\,t});$
		\item \label{statement3S3} $S_3$ is the so-called fake projective plane {\rm\cite[Example 1.2]{Kasprzyk}}, i.e., the quotient $\P^2/\sigma$ under a transformation $\sigma \in \mathrm{PGL}(3, \F{q})$, whose fixed point set is $\bfP_{\!3};$ 
		\item \label{statement4S3} $S_3^\prime$ is the blowing up of the del Pezzo surface $\D_3$ of degree $6$ $($see {\rm \cite[\S 2.5]{MyArticle2019})} at one of the two triples 
		$$
		\bfQ_3 = \{Q_0, Q_1, Q_2\},\qquad \bfQ_3^\prime = \{Q_0^\prime, Q_1^\prime, Q_2^\prime\}
		$$ 
		of $\F{q}$-conjugate $T_3$-invariant points.
	\end{enumerate}	
\end{theorem}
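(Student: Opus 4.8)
The plan is to read statements 1 and 2 off the reflexive-polygon/del Pezzo correspondence recalled above (the Theorems of \cite[\S\S 6.2, 8.3]{CoxLittleSchenck}), and to prove statements 3 and 4 by making the relevant birational maps explicit. For statements 1 and 2 I would first record that $S_3$, being a cubic surface in $\P^3$ with only Du Val (hence Gorenstein) $\rmA_2$ points, has ${-}K_{S_3}$ equal to the hyperplane class, so $K^2_{S_3} = \deg S_3 = 3$ and $S_3$ is a Gorenstein del Pezzo surface of degree $3$. By the preceding Remark, $T_3 \simeq S_3 \setminus \{x_3 = 0\}$ is the open torus and the $\h{L}_i$ (resp. $\h{P}_i$) are exactly its invariant curves (resp. points); since $\alpha \in \F{q^3}$ gives $\mathrm{Frob}(X_i) = X_{i+1}$, the Frobenius permutes the three rays cyclically, i.e. acts by an order-$3$ integral transformation. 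Comparing with Figure \ref{symmetricReflexivePolygons} and Table \ref{toricSurfaces}, the only reflexive polygon of degree $3$ admitting such an action is $(Pol_3, \varPhi_3^{\,t})$, which is statement 2; uniqueness (statement 1) is then immediate from the bijectivity of the correspondence, once one notes that an order-$3$ torus action forces the polygon into the symmetric list of Figure \ref{symmetricReflexivePolygons}.

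For statement 3 I would exhibit the cyclic quotient explicitly. After the linear change $(x_0:x_1:x_2) \mapsto (X_0:X_1:X_2)$ over $\F{q}$ the surface becomes the standard norm cubic $X_0 X_1 X_2 = x_3^3$, and the morphism
\[
\pi\colon \P^2_{(y_0:y_1:y_2)} \to S_3, \qquad (y_0:y_1:y_2) \mapsto (y_0^3 : y_1^3 : y_2^3 : y_0 y_1 y_2),
\]
is well defined, surjective, and of degree $3$, its fibres being the orbits of $\sigma\colon (y_0:y_1:y_2) \mapsto (y_0 : \omega y_1 : \omega^2 y_2)$ for a primitive cube root of unity $\omega$. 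Hence $S_3 \simeq \P^2/\langle \sigma \rangle$ is a fake projective plane in the sense of \cite[Example 1.2]{Kasprzyk}. The fixed locus of $\sigma$ consists of the three coordinate points, which $\pi$ sends to the $\rmA_2$ points $\h{P}_i$ and which correspond to $\bfP_{\!3} = \{P_i\}$ under $pr$; it remains to descend this picture to $\F{q}$.

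Statement 4 is a purely combinatorial comparison of fans. The crepant resolution $\varphi_{min}$ replaces each $\rmA_2$ cone of the fan of $S_3$ by its unique minimal subdivision, adding two rays over each of the three $T_3$-fixed points, so $S_3^\prime$ has $3 + 3\cd 2 = 9$ torus-invariant curves, as befits a weak del Pezzo surface of degree $3$. On the other side, the hexagon $Pol_6$ of $\D_6$ has six rays that split into two alternating $\F{q}$-conjugate triples under the order-$3$ action; star-subdividing the three maximal cones over one such triple --- that is, blowing up one of $\bfQ_3$, $\bfQ_3^\prime$ --- adds three rays, again yielding $9$. I would then check that the two refined fans coincide, the two triples being interchanged by the order-$2$ element of the hexagonal symmetry outside the order-$3$ action (this is the dichotomy visible in the red/blue arrows of $Pol_9$ in Figure \ref{symmetricReflexivePolygons}).

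I expect the main obstacle to be the descent in statement 3. Over $\overline{\F{q}}$ the map $\pi$ and the deck transformation $\sigma$ are transparent, but $\sigma$ is diagonal with eigenvalues $1, \omega, \omega^2$ that need not lie in $\F{q}$ when $q \not\equiv 1 \pmod 3$, while the three fixed points $\bfP_{\!3}$ form a single Frobenius orbit of size $3$. The hard part is therefore to check that $\sigma$, together with the twisted $\F{q}$-form of $\P^2$ on which it acts, is genuinely $\F{q}$-rational: one arranges the eigenvalues into a Frobenius-stable triple $\mu, \mu^q, \mu^{q^2}$ with pairwise ratios the three cube roots of unity, and uses that over a finite field every form of $\P^2$ is trivial (so the cover really is $\P^2_{\F{q}}$ and $\sigma \in \mathrm{PGL}(3, \F{q})$). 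The remaining statements are essentially formal once the fan of $S_3$ and its resolution are written down.
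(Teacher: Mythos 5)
Your proposal diverges from the paper's proof, which is essentially a citation argument: all four statements are taken from the cited references in the split-torus case, and the only thing the paper itself proves is that $\varPhi_3^{\,t}$ is the unique order-$3$ action on $Pol_3$ up to conjugation in $\Aut(Pol_3)$, which is what licenses transporting those split statements to the $T_3$-twist. Your explicit treatments of statements 1, 2 and 4 (the Frobenius computation on the forms $X_i$, and the comparison of the subdivided $\rmA_2$-fan of $S_3$ with the star-subdivided hexagon fan of the degree-$6$ surface) are sound sketches that in effect unwind that one observation by hand, and they are a legitimate, more self-contained route for those three statements.

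The genuine gap is in statement 3, exactly at the step you flagged as the hard part, and your proposed repair cannot work. Since the fixed point set must be $\bfP_{\!3}$, any candidate $\sigma$ is, in the basis of the conjugate forms $X_0, X_1, X_2$, the map $\mathrm{diag}(1,\omega,\omega^2)$ up to scalar and inversion. Frobenius acts on this basis by a cyclic permutation combined with the entrywise $q$-th power, and a direct computation gives $\sigma^{(q)} = \sigma$ in $\mathrm{PGL}_3$ if $q \equiv 1 \pmod 3$, but $\sigma^{(q)} = \sigma^{-1}$ if $q \equiv 2 \pmod 3$. Equivalently, your Frobenius-stable triple $\mu, \mu^q, \mu^{q^2}$ with all pairwise ratios primitive cube roots of unity exists only when $3 \mid q^2+q+1$, i.e.\ $q \equiv 1 \pmod 3$: indeed the ratio $\mu/\mu^{q^2} = (\mu^{q-1})^{-(q+1)}$ is forced to equal $1$ when $3 \mid q+1$, collapsing two of the fixed points. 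So for $q \equiv 2 \pmod 3$ there is no order-$3$ element of $\mathrm{PGL}(3,\F{q})$ whose fixed points are three $\F{q}$-conjugate points, and no choice of eigenvalues can save the argument. What does descend is the subgroup $\langle\sigma\rangle$ (Frobenius-stable precisely because $\sigma^{(q)} = \sigma^{\pm 1}$), or equivalently the $\sigma$-invariant linear system spanned by $X_0^3, X_1^3, X_2^3, X_0X_1X_2$, which is visibly Frobenius-stable and maps $\P^2$ onto the norm cubic; the proof must therefore be organized around descending the group and the quotient map, not the generator. A separate facet of the same problem: for $p = 3$ (and the paper's tables include $q = 3, 9$) there are no nontrivial cube roots of unity, the degree-$3$ covering is purely inseparable, and the quotient is by the infinitesimal group scheme $\mu_3$, so the $\Z/3$-picture fails entirely. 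Your attempt in fact exposes that statement 3 as literally written (with $\sigma \in \mathrm{PGL}(3,\F{q})$ fixing $\bfP_{\!3}$) can only hold in the weaker quotient-by-an-$\F{q}$-rational-group sense when $q \not\equiv 1 \pmod 3$ --- an issue the paper's citation-style proof never confronts.
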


\begin{proof}
	All statements can be found in the references, except that the action $\varPhi_3^{\,t}$ on the polygon $Pol_3$ is the only one of order $3$ \big(up to a conjugation in $\Aut(Pol_3)$\big). This fact is necessary in order to correctly pass from the split torus case (in those references) to that of $T_3$.
\end{proof} 

From \cite[Theorem 14]{MyArticle2019} or one of Statements \ref{statement3S3}, \ref{statement4S3} of Theorem \ref{theSurfaceS3} it follows that the Picard $\F{q}$-number of $S_3$ is equal to $1$. Since the Fano index of $S_3$ is also $1$, we obtain

\begin{lemma} The Picard $\F{q}$-group of the surface $S_3$ is equal to
	$$
	\Pic(S_3) = \langle{-}K_{S_3}\rangle \simeq \Z.
	$$
\end{lemma}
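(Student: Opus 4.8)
The plan is to combine the two numerical invariants just recalled, namely the Picard $\F{q}$-number and the Fano index of $S_3$, each of which equals $1$. First I would invoke Lemma \ref{toricProperties}.3, which guarantees that $\Pic(S_3)$ is a free abelian group, together with the fact that its $\F{q}$-number, i.e. its rank, equals $1$. A finitely generated free abelian group of rank $1$ is isomorphic to $\Z$, so $\Pic(S_3) \simeq \Z$ and it admits a single generator $G$, unique up to sign. At this stage the isomorphism type is settled; what remains is to identify $-K_{S_3}$ with a generator.

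Next I would pin down the generator using ampleness and the Fano index. Since $-K_{S_3}$ is an ample (indeed very ample) Cartier class, and since in a rank-$1$ Picard group the ample cone is exactly one of the two open half-lines, I can normalise the choice so that $-K_{S_3} = mG$ with an integer $m \geqslant 1$. Now the Fano index enters: by definition it is the largest $i \in \N$ for which $K_{S_3} \sim iH$, equivalently $-K_{S_3} \sim i({-}H)$, for some Cartier divisor $H$ defined over $\F{q}$. Writing everything inside $\Pic(S_3) = \langle G\rangle$, an identity $-K_{S_3} = i\cd(kG)$ forces $m = ik$, so every admissible $i$ divides $m$; taking $k = 1$ shows the maximal such $i$ is exactly $m$. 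By Lemma \ref{toricProperties}.1, applicable since $K_{S_3}^2 = 3 \leqslant 7$, the Fano index equals $1$, whence $m = 1$. Therefore $-K_{S_3} = G$ is itself a generator, which gives $\Pic(S_3) = \langle{-}K_{S_3}\rangle \simeq \Z$.

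The argument is essentially formal once the two invariants are available, so I do not anticipate a serious obstacle. The one point demanding care is the sign and ampleness normalisation of the generator: I must ensure that the multiple $m$ relating $-K_{S_3}$ to $G$ is strictly positive, so that the Fano-index computation yields $m = 1$ rather than merely $m = \pm 1$. This is exactly where the very ampleness of $-K_{S_3}$ is used, pinning the ample half-line and hence the correct sign of $G$. Everything else, the freeness of $\Pic(S_3)$ and the values of the Picard $\F{q}$-number and the Fano index, has already been established in the preceding lemmas and in the discussion immediately before the statement.
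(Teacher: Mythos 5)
Your proposal is correct and follows the paper's own route: the paper derives this lemma in one line from exactly the ingredients you use — the Picard $\F{q}$-number of $S_3$ being $1$, the Fano index being $1$ (via Lemma \ref{toricProperties}.\ref{Fanoindex}, since $K_{S_3}^2 = 3 \leqslant 7$), and the freeness of $\Pic$ from Lemma \ref{toricProperties}.3. You have merely spelled out the formal divisibility/normalisation argument that the paper leaves implicit, which is fine.
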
 

For the sake of definiteness, we choose the triple $\bfQ_3$ and thus we deal with the diagram
$$
S_3 \stackrel{ \raisebox{0.2cm}{$\varphi_{min}$} }{\longleftarrow} S_3^\prime \stackrel{ \raisebox{0.2cm}{$bl_{\bfQ_3}$} }{\longrightarrow} \D_3 \stackrel{ \raisebox{0.2cm}{$bl_{\bfP_{\!3}}$} }{\longrightarrow} \P^2,
$$
where $bl_{\bfQ_3}$, $bl_{\bfP_{\!3}}$ are the blowing up maps at $\bfQ_3$, $\bfP_{\!3}$ respectively. Besides, let 
$$
\varphi := bl_{\bfP_{\!3}} \circ bl_{\bfQ_3} \circ \inv{\varphi_{min}} \qquad \varphi\!: S_3 \dashrightarrow \P^2.
$$

\begin{corollary} \label{AnticanLinearSystemS3}
	The anticanonical linear system of $S_3$ is equal to
	$$
	|{-}K_{S_3}| = \varphi^*(\L) - 2\h{\bfL}_3, \qquad \textrm{where} \qquad \L := |\bfL_3 - \bfP_{\!3} - \bfQ_3|
	$$
	is the (incomplete) linear system of all (possibly reducible or singular) $\F{q}$-cubics $C \subset \P^2$ passing through $\bfP_{\!3}$ such that $L_i$ is a tangent of $C$ at $P_{i+1}$ $($resp. $P_{i+2}$ for the triple $\bfQ_3^\prime)$.
\end{corollary}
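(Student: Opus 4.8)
The plan is to reduce the statement to a bookkeeping identity in $\Pic(S_3^\prime)$, carried out along the birational chain $S_3 \xleftarrow{\varphi_{min}} S_3^\prime \xrightarrow{bl_{\bfQ_3}} \D_3 \xrightarrow{bl_{\bfP_{\!3}}} \P^2$. The starting point is that $-K_{S_3}$ is the hyperplane class of the cubic $S_3 \subset \P^3$ (by adjunction $K_{S_3} = (K_{\P^3}+S_3)|_{S_3} = -H|_{S_3}$, compatibly with $\dim|{-}K_{S_3}| = K^2_{S_3} = 3$), and that $\varphi_{min}$ is crepant, so that $\varphi_{min}^*(-K_{S_3}) = {-}K_{S_3^\prime}$. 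Writing $H$ for the pull-back to $S_3^\prime$ of the line class of $\P^2$ and $E_{P_i}, E_{Q_i}$ for the six exceptional classes of $bl_{\bfP_{\!3}} \circ bl_{\bfQ_3}$, the iterated adjunction formula $K_{\wt X} = \pi^*K_X + E$ yields, in the standard orthogonal basis of $\Pic(S_3^\prime)$,
$$
{-}K_{S_3^\prime} \;=\; 3H - \sum_{i=0}^2 E_{P_i} - \sum_{i=0}^2 E_{Q_i}.
$$

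Next I would identify $\L$ with strict transforms. The surface $\D_3$ is the degree-$6$ del Pezzo whose six $(-1)$-curves $E_{P_i}$ and $\wt L_i = H - E_{P_{i+1}} - E_{P_{i+2}}$ (strict transform of $L_i$) form a hexagon; by Statement \ref{statement4S3} of Theorem \ref{theSurfaceS3} the relevant $T_3$-invariant points are the two $\F{q}$-conjugate triples $\bfQ_3, \bfQ_3^\prime$ of alternating hexagon corners. A corner such as $E_{P_i} \cap \wt L_{i+2}$ records the tangent direction of $L_{i+2}$ at $P_i$, so blowing up $\bfQ_3$ imposes exactly the tangency conditions defining $\L$ (and $\bfQ_3^\prime$ gives the other tangent, hence the parenthetical in the statement). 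A general $C \in \L$ thus passes simply through $\bfP_{\!3}$ and through the infinitely near points $\bfQ_3$, so its strict transform on $S_3^\prime$ has class $3H - \sum E_{P_i} - \sum E_{Q_i} = {-}K_{S_3^\prime}$; the count $9 - 6 = 3 = \dim|{-}K_{S_3}|$ confirms $\dim\L = \dim|{-}K_{S_3}|$.

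The correction $-2\h{\bfL}_3$ appears once $\varphi^*$ is read as a total transform. Analysing the $\rmA_2$-resolution shows that $\varphi_{min}$ contracts the six $(-2)$-curves $\wt E_{P_i}, \wt{\wt L}_j$ (paired into three chains), whereas the three curves $E_{Q_i}$ survive and each map isomorphically onto a line $\h{L}_j$, these images running through all of $\h{\bfL}_3$ as $i$ varies over $\Z/3$. Since $Q_i \in E_{P_i}$ we have $E_{P_i} = \wt E_{P_i} + E_{Q_i}$, whence $(\varphi_{min})_* E_{P_i} = (\varphi_{min})_* E_{Q_i} = \h{L}_j$, so each of $\sum_i E_{P_i}$ and $\sum_i E_{Q_i}$ pushes forward to $\h{\bfL}_3$. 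Reading $\varphi^* C$ as the $\varphi_{min}$-pushforward of the total transform $bl^* C$, whose class is $3H = {-}K_{S_3^\prime} + \sum_i E_{P_i} + \sum_i E_{Q_i}$, I would then compute
$$
\varphi^* C \;=\; (\varphi_{min})_*(bl^*C) \;=\; {-}K_{S_3} + (\varphi_{min})_*\Big(\textstyle\sum_i E_{P_i} + \sum_i E_{Q_i}\Big) \;=\; {-}K_{S_3} + 2\,\h{\bfL}_3,
$$
which is precisely $|{-}K_{S_3}| = \varphi^*(\L) - 2\h{\bfL}_3$ as $C$ ranges over $\L$.

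The main obstacle is the third step: pinning down the combinatorics of the $\rmA_2$ chains, i.e.\ deciding which $(-2)$-curves are contracted and verifying the multiplicity identity $E_{P_i} = \wt E_{P_i} + E_{Q_i}$ together with the isomorphism $E_{Q_i} \simTo \h{L}_j$, so that the coefficient of $\h{\bfL}_3$ comes out to $2$ rather than $1$ or $3$. Everything else is routine adjunction and intersection theory on a weak del Pezzo surface; the real care lies in matching the cyclic indices of $\bfP_{\!3}, \bfQ_3$, the lines $L_i$, and the singular points $\h{P}_i$ consistently with the Frobenius action $\varPhi_3^{\,t}$.
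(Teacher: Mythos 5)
Your proposal is correct and takes essentially the same route as the paper: both arguments run the system $\L$ through the chain $\P^2 \leftarrow \D_3 \leftarrow S_3^\prime \rightarrow S_3$, identify the six contracted $(-2)$-curves $\wt{E}_{P_i}, \str{L}_j$ and the three surviving $(-1)$-curves $E_{Q_i}$ mapping onto the lines $\h{L}_j \subset S_3$, and obtain the coefficient $2$ on $\h{\bfL}_3$ by the same multiplicity bookkeeping (your split $1+1$ between the total transforms $\sum_i E_{P_i}$ and $\sum_i E_{Q_i}$ is exactly the paper's coefficient $2$ on $E_{\bfQ_3}$ in $\L^{**} = \ \str{\!\L} + \wt{E}_{\bfP_{\!3}} + 2E_{\bfQ_3}$). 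The differences are purely organizational: you compute in the standard basis of $\Pic(S_3^\prime)$ via adjunction and crepancy and add an explicit dimension count $9-6=3$, whereas the paper pulls back $\L$ step by step as a linear system with fixed components and cites Hartshorne, Exercise V.3.2 for the tangency interpretation that you read off from the hexagon corners.
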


\noindent For more clarity on what is going on, see Figures \ref{LinesLionP2AndCurveCFromL}, \ref{CurvesOnSurfaceD3}, where arrows denote the Frobenius action. In the second figure $E_{P_i}$ are the exceptional curves associated with the points $P_i$ and $\widetilde{L}_i$ (resp. $\widetilde{C}$) are the proper preimages of $L_i$ (resp. $C \neq \bfL_3$) with respect to $bl_{\bfP_{\!3}}$. As usual, we also use the notations
$$
E_{\bfP_{\!3}} := \sum_{i=0}^2E_{P_i} \qquad \mathrm{and} \qquad 
\wt{\bfL}_3 := \sum_{i=0}^2\wt{L}_i. 
$$

\begin{proof}
	Let us freely use known identities for direct and inverse images of (possibly incomplete) linear systems on algebraic surfaces (see, e.g., \cite[\S II.5-6, \S IV.2]{Hartshorne}). First,
	$$
	bl_{\bfP_{\!3}}^*\big| \bfL_3 - \bfP_{\!3} \big| = 
	\big| bl_{\bfP_{\!3}}^*(\bfL_3) - E_{\bfP_{\!3}} \big| + E_{\bfP_{\!3}} = 
	\big| \widetilde{\bfL}_3 + E_{\bfP_{\!3}} \big| + E_{\bfP_{\!3}}.
	$$
	Therefore
	$$
	\L^* := bl_{\bfP_{\!3}}^*(\L) = \wt{\L} + E_{\bfP_{\!3}}, \qquad \mathrm{where} \qquad 
	\wt{\L} := \big| \widetilde{\bfL}_3 + E_{\bfP_{\!3}} - \bfQ_3 \big|.
	$$
	
	Next, let $E_{\bfQ_3}$ be the exceptional divisor associated with the point set $\bfQ_3$ and $\str{\bfL}_3$ (resp. $\wt{E}_{\bfP_{\!3}}$) be the proper preimage of $\wt{\bfL}_3$ (resp. $E_{\bfP_{\!3}}$) with respect to $bl_{\bfQ_{3}}$. We have:
	$$
	bl_{\bfQ_3}^*\big( \wt{\L} \big) = \big| bl_{\bfQ_3}^*\big( \wt{\bfL}_3 + E_{\bfP_{\!3}} \big) - E_{\bfQ_3} \big| + E_{\bfQ_3} = \ \str{\!\L} + E_{\bfQ_3},
	$$
	$$
	\L^{**} := bl_{\bfQ_3}^*(\L^*) = bl_{\bfQ_3}^*\big( \wt{\L} \big) + bl_{\bfQ_3}^*(E_{\bfP_{\!3}}) = \ \str{\!\L} + \wt{E}_{\bfP_{\!3}} + 2E_{\bfQ_3},
	$$
	where $\ \str{\!\L} \ := \big|\! \str{\bfL}_3 + \wt{E}_{\bfP_{\!3}} + E_{\bfQ_3} \big|$.
	
	From the identities
	$$
	(\varphi_{min})_*\Big(\!\! \stackrel{\approx}{\bfL}_3 \!\!\Big) = (\varphi_{min})_*\big( \wt{E}_{\bfP_{\!3}} \big) = 0,\qquad (\varphi_{min})_*(E_{\bfQ_3}) = \h{\bfL}_3 \sim {-}K_{S_3}
	$$
	it follows that
	$$
	\varphi^*(\L) = (\varphi_{min})_* (\L^{**}) = 
	(\varphi_{min})_* \Big(\! \str{\!\L} \!\Big) + (\varphi_{min})_*\big( \wt{E}_{\bfP_{\!3}} + 2E_{\bfQ_3} \big) = |{-}K_{S_3}| + 2\h{\bfL}_3.
	$$
	Finally, $\L$ has the geometric description declared in the corollary by virtue of \cite[Exercise V.3.2]{Hartshorne}.
\end{proof}

\begin{figure}
	\begin{minipage}{0.49\linewidth}
		\centering
		\begin{tikzpicture}[scale=1.6]
		\begin{scope}[very thick]
		\draw (-1, -1.5) node[right] {$L_0$} -- (0.5, 1.25);
		\draw (1, -1.5) node[right] {$L_2$} -- (-0.5, 1.25);
		\draw (-1.5, -0.75) node[above] {$L_1$} -- (1.5, -0.75);
		
		\draw (0.23, -0.57) arc (100: -40: 0.35);
		\draw (0.04, -0.09) arc (220: 80: 0.35);
		\draw (-0.92, -0.98) arc (160: 20: 0.35) node[below]{$C$};
		\end{scope}
		
		\fill (0, 0.35) node[left] {$P_1\,$};
		\fill (-0.6, -0.75) node[left] {\raisebox{0.7cm}{$P_2\!$}};
		\fill (0.6, -0.75) node[right] {\raisebox{0.7cm}{$\!P_0$}};
		
		\begin{scope}[red, densely dashed]	
		\draw[-<-=.35] (-1.25, -0.75) arc (180: 255: 0.5);
		\draw[-<-=.35] (0.9, -1.25) arc (285: 360: 0.5);
		\draw[-<-=.35] (0.35, 1) arc (45: 135: 0.5);
		\end{scope}
		\end{tikzpicture}
		\caption{The lines $L_i \subset \P^2$ and a cubic \mbox{$C \in \L$}, $C \neq \bfL_3$}
		\label{LinesLionP2AndCurveCFromL}
	\end{minipage}
	\begin{minipage}{0.49\linewidth}
		\centering
		\begin{tikzpicture}[scale=1.5]
		\begin{scope}[very thick]	
		\draw (-1.25, 1) -- (1.25, 1) node[above]{$E_{P_1}$};
		\draw (-1.25, -1) -- (1.25, -1) node[below]{$\widetilde{L}_1$};
		\draw (-0.5, -1.5) -- (-1.5, 0.5) node[left]{$E_{P_2}$};
		\draw (-1.5, -0.5) node[left]{$\widetilde{L}_0$} -- (-0.5, 1.5);
		\draw (0.5, -1.5) node[right]{$E_{P_0}$} -- (1.5, 0.5);
		\draw (1.5, -0.5) -- (0.5, 1.5) node[right]{$\widetilde{L}_2$};
		
		\draw (-0.8, -1.45) node[left] {$\wt{C}$} arc (200: 90: 0.5);
		\draw (-0.8, 1.45) arc (160: 270: 0.5);
		\draw (0.85, 0.2) arc (215: 325: 0.5);
		\end{scope}	
		
		\fill (-0.7, -1) node[left] {\raisebox{0.7cm}{$Q_2\ \,$}};
		\fill (-0.7, 1) node[left] {\raisebox{-0.9cm}{$Q_1 \hspace{0.25cm}$}};
		\fill (1.24, 0) node[below] {\raisebox{-0.7cm}{$Q_0$}};
		
		\fill (0.7, -1) node[left] {\raisebox{0.7cm}{$Q_2^\prime\!\!$}};
		\fill (0.7, 1) node[left] {\raisebox{-1cm}{$Q_0^\prime\!\!\!$}};
		\fill (-1.3, 0) node[left] {$Q_1^\prime$};
		
		\begin{scope}[red, densely dashed]
		\draw[-<-=.25] (0, 1) -- (1, -0.5);
		\draw[-<-=.25] (1, -0.5) -- (-1, -0.5);	
		\draw[-<-=.25] (-1, -0.5) -- (0, 1);
		\end{scope}
		\begin{scope}[blue, densely dashed]
		\draw[-<-=.5] (-1, 0.5) .. controls (0, 0.25) .. (1, 0.5);
		\draw[-<-=.5] (1, 0.5) .. controls (0.25, 0) and (0.25, -0.25) .. (0, -1);
		\draw[-<-=.5] (0, -1) .. controls (-0.25, -0.25) and (-0.25, 0) .. (-1, 0.5);
		\end{scope}
		\end{tikzpicture}
		
		\caption{The curves $E_{P_i}, \widetilde{L}_i, \widetilde{C} \subset \D_3$}
		\label{CurvesOnSurfaceD3}
	\end{minipage}	
\end{figure}

One can easily check that

\begin{remark}
	Any $C \in \L$ different from $\bfL_3$ is an absolutely irreducible (possibly singular) $\F{q}$-cubic.
\end{remark}

\begin{remark} \label{remarkEllipticCurvesS3}
	Given $C \in \L$, the divisor $D := \varphi^*(C) - 2\h{\bfL}_3$ is an elliptic curve if and only if $C$ is one too. Moreover, in this case $\varphi\!: D \to C$ is an isomorphism.
\end{remark}

\begin{lemma}
	Let $E \in \L$ be an elliptic $\F{q}$-curve and $\O \in E(\F{q^3})$ be one of its flexes, which, as is known, always exists over $\F{q^3}$ $($for details see \mbox{\rm\cite[Chapter 11]{Hirschfeld1998})}. Then the $P_i$ are points of order $9$ $($with respect to $\O$ as the neutral element of the chord-tangent group law on $E)$ such that $\langle P_0 \rangle = \langle P_1 \rangle = \langle P_2 \rangle.$
\end{lemma}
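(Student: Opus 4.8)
The plan is to translate the incidence and tangency conditions defining $\L$ into additive relations in the group $(E,\O)$, and then to solve the resulting linear system over $\Z$. Throughout I write the chord–tangent law additively with the flex $\O$ as the zero element, and I use the standard fact that, precisely because $\O$ is an inflection point, the three points cut out on $E$ by any line (counted with multiplicity) sum to $\O$; equivalently $[A]+[B]+[C]\sim 3[\O]$ whenever $A+B+C$ is a line section (see \cite[Chapter 11]{Hirschfeld1998}).

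First I would record the relevant intersection divisors. Since $P_j=L_{j+1}\cap L_{j+2}$, the line $L_i$ passes through exactly $P_{i+1}$ and $P_{i+2}$, and by the description of $\L$ in Corollary \ref{AnticanLinearSystemS3} it is tangent to $E$ at $P_{i+1}$. As $E$ is a smooth cubic and the $P_0,P_1,P_2$ are the vertices of the triangle formed by the three distinct, non-concurrent lines $L_0,L_1,L_2$ (the forms $X_0,X_1,X_2$ being linearly independent for $\alpha \in \F{q^3}\setminus\F{q}$), the tangency at $P_{i+1}$ is ordinary and $P_{i+2}$ is a transverse point. Hence
\[
L_i\cap E = 2[P_{i+1}]+[P_{i+2}],\qquad i\in\Z/3,
\]
and applying the group law to each section gives
\[
2P_1+P_2=\O,\qquad 2P_2+P_0=\O,\qquad 2P_0+P_1=\O.
\]

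Next I would solve this system in $E(\overline{\F{q}})$. Elimination yields $P_2=-2P_1$, then $P_0=-2P_2=4P_1$, and finally $2P_0+P_1=9P_1=\O$. Thus $9P_1=\O$, while $P_0=4P_1$ and $P_2=-2P_1$ both lie in $\langle P_1\rangle$. Writing $n$ for the order of $P_1$, we have $n\mid 9$, so $n\in\{1,3,9\}$ and $4,2$ are coprime to $n$; therefore $4P_1$ and $-2P_1$ each generate the whole of $\langle P_1\rangle$, giving $\langle P_0\rangle=\langle P_1\rangle=\langle P_2\rangle$ regardless of the precise value of $n$.

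It remains to pin down $n=9$, and this is where the only real subtlety lies: I must exclude $n=1$ and $n=3$. If $n=1$ then $P_1=\O$ and the relations force $P_0=P_2=\O$; if $n=3$ then $4\equiv-2\equiv1\pmod 3$ give $P_0=P_2=P_1$. In either case all three $P_i$ collapse to one point, contradicting the distinctness established above. Hence $n=9$, and by the coprimality just noted each $P_i$ has order exactly $9$ with $\langle P_0\rangle=\langle P_1\rangle=\langle P_2\rangle$. The heart of the argument is thus the single divisor identity $L_i\cap E=2[P_{i+1}]+[P_{i+2}]$: once its multiplicities are justified by the smoothness of $E$ and the distinctness of the $P_i$, everything else is a routine computation in a cyclic group.
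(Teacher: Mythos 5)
Your proof is correct and takes essentially the same route as the paper's: both translate the tangency/incidence conditions defining $\L$ into the three relations $2P_0+P_1=2P_1+P_2=2P_2+P_0=\O$ (using that collinear points sum to $\O$ when the origin is a flex), solve the system to conclude $9P_i=\O$ with each point a multiple of the others, and exclude orders $1$ and $3$ on the grounds that the $P_i$ would then coincide. Your write-up simply makes explicit the intersection-divisor computation and the distinctness of the $P_i$ (vertices of the triangle $L_0L_1L_2$), which the paper leaves implicit.
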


\begin{proof}
	By definition of $\L$ in Corollary \ref{AnticanLinearSystemS3}, 
	$$
	2P_0 + P_1 \: = \: 2P_1 + P_2 \: = \: 2P_2 + P_0 \: = \: \O,
	$$
	hence we see that
	$$
	9P_0 = \O,\qquad 7P_0 = P_1,\qquad 4P_0 = P_2.
	$$
	Similarly, $P_0$, $P_2$ (resp. $P_0$, $P_1$) are expressed through $P_1$ (resp. $P_2$). Finally, the points $P_i$ are of order $9$, otherwise they would be equal. 
\end{proof}

\begin{theorem}
	For any elliptic $\F{q}$-curve $E \in |{-}K_{S_3}|$ the order $|E(\F{q})|$ is divisible by $3$.
\end{theorem}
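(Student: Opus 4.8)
The plan is to transfer the statement to the plane cubic and there exhibit an $\F{q}$-rational divisor class of order exactly $3$. By Remark~\ref{remarkEllipticCurvesS3} the map $\varphi$ restricts to an $\F{q}$-isomorphism between the anticanonical curve $E \in |{-}K_{S_3}|$ and the corresponding cubic $C \in \L$, so $|E(\F{q})| = |C(\F{q})|$ and it suffices to argue on $C \subset \P^2$. The three points $P_0, P_1, P_2$ lie on $C$ and are $\F{q}$-conjugate, since the Frobenius cyclically permutes the torus-invariant points $\h{P}_i$ and hence their projections $P_i = pr(\h{P}_i)$. Consequently the effective degree-$3$ divisor $\Pi := P_0 + P_1 + P_2$ is defined over $\F{q}$, and so is the line-section (hyperplane) class $h$ of the plane cubic $C$.

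First I would compute the class of $\Pi - h$ in $\Pic(C)$. As $\O$ is a flex, its tangent line meets $C$ only at $\O$ with multiplicity $3$, so $3\O \sim h$ and therefore $[\Pi - h] = [\Pi - 3\O]$. Under the Abel--Jacobi isomorphism $C \simTo \Pic^0(C)$, $Q \mapsto [Q - \O]$, taken over $\overline{\F{q}}$ with the flex $\O$ as origin, this degree-$0$ class corresponds to the group-law sum $P_0 \oplus P_1 \oplus P_2$. The preceding lemma gives $P_1 = 7P_0$ and $P_2 = 4P_0$ together with $9P_0 = \O$, so this sum equals $12P_0 = 3P_0$, a point of exact order $3$ because $P_0$ has order $9$. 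Hence $[\Pi - h]$ is an element of order exactly $3$ in $\Pic^0(C)$.

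Finally, $[\Pi - h]$ is $\F{q}$-rational, being the difference of the classes of the $\F{q}$-rational divisors $\Pi$ and $h$, and its order is the geometric one just computed. Thus $\Pic^0(C)(\F{q}) \simeq E(\F{q})$ carries a nonzero $3$-torsion element, and since $|E(\F{q})| = |C(\F{q})| = |\Pic^0(C)(\F{q})|$ we conclude $3 \mid |E(\F{q})|$. The one delicate point --- and the main conceptual obstacle --- is that the chosen origin $\O$ is only $\F{q^3}$-rational, so the Abel--Jacobi map and the group law are a priori not $\F{q}$-equivariant; the argument sidesteps this by observing that rationality of the class $[\Pi - h]$ depends only on the divisors $\Pi$ and $h$, whereas the origin $\O$ enters solely to evaluate the order over $\overline{\F{q}}$, where orders are preserved by the isomorphism $E(\overline{\F{q}}) \simeq \Pic^0(C)(\overline{\F{q}})$. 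Equivalently, one may phrase the computation as saying that the linear part of the Frobenius action on the flexes of $C$ fixes $3P_0$, so it has eigenvalue $1$ on the $3$-torsion $E[3]$, which again forces $3 \mid |E(\F{q})|$.
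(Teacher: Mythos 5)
Your proof is correct, and it takes a genuinely different route from the paper's. Both arguments begin the same way: reduce via Remark~\ref{remarkEllipticCurvesS3} to a plane cubic $C \in \L$, and invoke the lemma asserting that, with a flex $\O \in C(\F{q^3})$ as origin, the points $P_i$ have order $9$, so that $3P_0$ has order $3$. The divergence is in how this order-$3$ element is descended to $\F{q}$. The paper works with points and coordinates: it fixes a rational origin $\O^\prime \in E(\F{q})$, passes to a Weierstrass model $W$ over $\F{q}$, transports $3P_0$ to an $\F{q^3}$-point $(x_0,y_0)$ of order $3$ on $W$, and then runs a field-theoretic case analysis --- either $x_0 \in \F{q}$, whence $y_0 \in \F{q^3} \cap \F{q^2} = \F{q}$, or else the $3$-division polynomial $\psi_3$ factors over $\F{q}$ as the cubic minimal polynomial of $x_0$ times a linear factor $x - x_1$, and the inclusion $W[3] \subset W(\F{q^3})$ forces $(x_1,y_1) \in W(\F{q})[3]$. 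You instead descend at the level of divisor classes: Frobenius permutes the $P_i$, so the divisor $\Pi = P_0 + P_1 + P_2$ and the line-section class $h$ are $\F{q}$-rational; the class $[\Pi - h] = [\Pi - 3\O]$ has exact order $3$ by the group-law computation; and the identification $\Pic^0(C)(\F{q}) \simeq C(\F{q})$ concludes by Lagrange. What your route buys: no case split, no division polynomials, uniformity in the characteristic (in particular $p = 3$ needs no special mention), and a conceptual explanation of where the rational $3$-torsion comes from --- it is the class of the Galois orbit of the $P_i$ minus a line section. What the paper's route buys: it is elementary and constructive, producing explicit coordinates of an $\F{q}$-point of order $3$ (both proofs in fact establish this slightly stronger statement). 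Two minor remarks: the identification $\Pic^0(C)(\F{q}) \simeq C(\F{q})$ needs $C(\F{q}) \neq \emptyset$, which you get for free from the $\F{q}$-isomorphism $C \simeq E$ (the paper uses $E(\F{q}) \neq \emptyset$ equally silently); and your closing ``equivalently'' reformulation via an eigenvalue $1$ of Frobenius on $E[3]$ should be dropped or restricted to $p \neq 3$, since in characteristic $3$ the group $E[3]$ is not a two-dimensional $\F{3}$-vector space (and Frobenius is only affine, not linear, with respect to the non-rational origin $\O$); fortunately, your main argument never relies on it.
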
	

\begin{proof}
	The result is proved by exhibiting an $\F{q}$-point of order $3$ on $E$. 
	
	By Remark \ref{remarkEllipticCurvesS3} we are in the conditions of the previous lemma, i.e., up to an $\F{q}$-isomorphism $E \in \L$. This curve has the group structure with respect to any point $\O^\prime \in E(\F{q}) \neq \emptyset$ \big(instead of a flex $\O \in E(\F{q^3})$\big) as the neutral element. It is well known that there is the group $\F{q^3}$-isomorphism $\tau(P) := P + \O^\prime$, \mbox{$\tau\!: E \simTo E.$} At the same time, $E$ has a Weierstrass form $W\!: y^2 + h(x)y = f(x)$ defined over $\F{q}$, where $\deg(h) \leqslant 1$, $\deg(f) = 3$. Let $\sigma\!: E \simTo W$ be the corresponding $\F{q}$-isomorphism such that $\sigma(\O^\prime)$ is the point at infinity.
	
	Consider the $\F{q^3}$-point $(x_0,y_0) := (\sigma \circ \tau)(3P_0)$ of order $3$ on $W$. If $x_0 \in \F{q}$ (e.g., this is true for $p=3$), then $y_0 \in \F{q^3} \cap \F{q^2} = \F{q}$ and all is proved. Otherwise the $3$-division polynomial $\psi_3$  (see, e.g., \cite[Exercise 3.7]{Silverman}) has exactly two $\F{q}$-irreducible factors, namely the $\F{q}$-minimal (cubic) polynomial of $x_0$ and $x - x_1$ for some $x_1 \in \F{q}$. Note that the $3$-torsion subgroup $W[3]$ is generated, for example, by the points $(x_0,y_0)$, $(x_0^q,y_0^q)$. Therefore $W[3] \subset W(\F{q^3})$ and thus $(x_1,y_1) \in W(\F{q})[3]$ for an appropriate $y_1$.
\end{proof}

\begin{corollary} \label{supersingularS3}
	For $p=3$ supersingular elliptic curves $($i.e., of $j$-invariant $0)$ \mbox{\rm\cite[\S 2.4.3]{TsfasmanVladutNogin}} do not belong to $|{-}K_{S_3}|$.
\end{corollary}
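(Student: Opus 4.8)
The plan is to argue by contradiction, combining the divisibility statement of the preceding theorem with the standard group-theoretic characterization of supersingularity in characteristic $3$. Suppose, for $p = 3$, that some supersingular elliptic $\F{q}$-curve $E$ belonged to $|{-}K_{S_3}|$, i.e.\ that $E$ has $j$-invariant $0$.

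First I would recall the classical fact that an elliptic curve over a field of characteristic $p$ is supersingular precisely when its $p$-torsion group scheme has no nontrivial geometric points, that is $E[p](\overline{\F{q}}) = \{\O\}$ (see, e.g., \cite[\S 2.4.3]{TsfasmanVladutNogin}). For $p = 3$ this says that $E$ carries no point of order $3$ even over the algebraic closure $\overline{\F{q}}$, and a fortiori none over $\F{q}$ itself. Equivalently, the trace of Frobenius $a$ satisfies $3 \mid a$, whence $|E(\F{q})| = q + 1 - a \equiv 1 \pmod 3$; in particular the group $E(\F{q})$ has trivial $3$-Sylow subgroup.

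Next I would invoke the theorem just proved, which asserts that $3 \mid |E(\F{q})|$ for every elliptic $\F{q}$-curve in $|{-}K_{S_3}|$. By Cauchy's theorem this forces $E(\F{q})$ to contain an element of order $3$, in direct contradiction with the conclusion of the previous paragraph. Hence no supersingular curve can lie in $|{-}K_{S_3}|$, which is exactly the claim. The only point deserving explicit mention — and the one I would be careful to state — is the passage from the definition of supersingularity ($j$-invariant $0$ in characteristic $3$) to the vanishing of the $3$-torsion; beyond quoting that classical fact the argument is an immediate numerical incompatibility, so there is no genuine obstacle here.
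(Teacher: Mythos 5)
Your proof is correct and follows exactly the route the paper intends: the corollary is stated as an immediate consequence of the preceding theorem (whose proof exhibits an $\F{q}$-point of order $3$ on every elliptic curve in $|{-}K_{S_3}|$), and you combine that divisibility with the standard fact that a supersingular curve in characteristic $3$ has trivial $3$-torsion, even geometrically. The only cosmetic remark is that your trace-of-Frobenius computation is redundant --- the vanishing of $E[3](\overline{\F{q}})$ together with Cauchy's theorem already yields the contradiction.
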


Finally, carefully analyzing small values $q$, we get the following result.

\begin{corollary} \label{FqPointsAntiCanDivisorsS3}
	For $q \geqslant 3$ and any $\F{q}$-divisor $D \in |{-}K_{S_3}|$ we have 
	$$
	|\Supp(D)(\F{q})| \leqslant 3\lfloor N_q(1)/3 \rfloor,
	$$ 
	where the number $N_q(1)$ is given in Theorem {\rm\ref{NqOne}}.
\end{corollary}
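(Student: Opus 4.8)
The plan is to identify $|{-}K_{S_3}|$ with the system of hyperplane sections of the cubic surface $S_3 \subset \P^3$ and then to bound $|\Supp(D)(\F{q})|$ separately on the reducible, the smooth-irreducible, and the singular-irreducible members. Since $-K_{S_3}$ is very ample with $\dim|{-}K_{S_3}| = K_{S_3}^2 = 3$, every $\F{q}$-divisor $D \in |{-}K_{S_3}|$ is a section $S_3 \cap H$ by an $\F{q}$-rational plane $H$, so $\Supp(D)$ is a plane cubic over $\F{q}$. First I would dispose of the reducible sections: a reducible cubic has a linear component, which must be a line lying on $S_3$; but the only lines on $S_3$ are the $\F{q^3}$-conjugate $\h{L}_i$, and Frobenius permutes them cyclically. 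Hence an $\F{q}$-rational plane containing some $\h{L}_i$ contains all three, so $H\cap S_3 \supseteq \h{L}_0\cup\h{L}_1\cup\h{L}_2$ is already a cubic, forcing $H = \{x_3 = 0\}$ and $\Supp(D) = \h{\bfL}_3$. Every other $\F{q}$-section is therefore an absolutely irreducible (possibly singular) cubic, in agreement with the Remark preceding Remark \ref{remarkEllipticCurvesS3}.

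For the exceptional triangle $\h{\bfL}_3$ I would check that it carries no $\F{q}$-point at all. An $\F{q}$-rational point on it lies on some $\h{L}_i$ and is Frobenius-fixed, hence also lies on $\mathrm{Frob}(\h{L}_i)=\h{L}_{i+1}$, i.e. on $\h{L}_0\cap\h{L}_1\cap\h{L}_2=\emptyset$ (the lines form a genuine triangle with non-rational vertices $\h{P}_i$). Concretely, the defining form $X_i = x_0 + \alpha^{q^i}x_1 + \alpha^{2q^i}x_2$ cannot vanish at an $\F{q}$-point because $\{1,\alpha,\alpha^2\}$ is an $\F{q}$-basis of $\F{q^3}$. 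Thus $|\Supp(\h{\bfL}_3)(\F{q})| = 0 \leqslant 3\lfloor N_q(1)/3\rfloor$ trivially.

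There remain the absolutely irreducible cubics. If $\Supp(D)$ is smooth, then either it has no $\F{q}$-point (and the bound is vacuous) or it is an elliptic curve $E \in |{-}K_{S_3}|$ by Lemma \ref{toricProperties}, in which case the divisibility theorem proved just above gives $3 \mid |E(\F{q})|$; combined with $|E(\F{q})| \leqslant N_q(1)$ this forces the count down to the largest multiple of $3$ not exceeding $N_q(1)$, namely $3\lfloor N_q(1)/3\rfloor$. If $\Supp(D)$ is singular, it is an irreducible nodal or cuspidal cubic whose unique singular point is Frobenius-fixed, hence $\F{q}$-rational; normalising by $\P^1$ then gives $|\Supp(D)(\F{q})| \leqslant q+2$, the extreme case being a node with conjugate branches (cusps and split nodes yield $q+1$ and $q$). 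It therefore suffices to verify the elementary inequality $q+2 \leqslant 3\lfloor N_q(1)/3\rfloor$.

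The one genuinely delicate point is this last inequality for small $q$, and it is where the hypothesis $q \geqslant 3$ enters; the obstacle is arithmetic rather than geometric. Using $3\lfloor N_q(1)/3\rfloor \geqslant N_q(1) - 2$ together with the uniform lower bound $N_q(1) \geqslant q + \lfloor 2\sqrt q\rfloor$ (which survives the exceptional Deuring--Waterhouse cases, where the extremal trace $\pm\lfloor 2\sqrt q\rfloor$ may be forbidden but $\pm(\lfloor 2\sqrt q\rfloor - 1)$ is always admissible), the inequality reduces to $\lfloor 2\sqrt q\rfloor \geqslant 4$, i.e. $q \geqslant 4$. The boundary case $q=3$ I would settle directly from Theorem \ref{NqOne}: there $N_3(1)=7$, the singular count is $q+2=5$, and $3\lfloor 7/3\rfloor = 6$, so the bound holds; note that for $q=3$ divisibility already caps the smooth count at $6$ independently of Corollary \ref{supersingularS3}, whereas for $q=2$ one has $\lfloor 2\sqrt 2\rfloor = 2$ and the inequality genuinely fails, which is exactly why that value is excluded. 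I would finish by tabulating the small $q \in \{3,4,5,7,8,9,\dots\}$ to confirm that the crude singular bound $q+2$ indeed fits under $3\lfloor N_q(1)/3\rfloor$ throughout.
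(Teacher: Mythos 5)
Your proof is correct and takes essentially the same approach as the paper: the core step is the same divisibility theorem $3 \mid |E(\F{q})|$ for elliptic members of $|{-}K_{S_3}|$, combined with $|E(\F{q})| \leqslant N_q(1)$. The paper compresses the remaining work into the phrase ``carefully analyzing small values $q$''; your classification of the degenerate hyperplane sections (the triangle $\h{\bfL}_3$, which has no $\F{q}$-points, and irreducible singular cubics with at most $q+2$ points) together with the check $q+2 \leqslant 3\lfloor N_q(1)/3 \rfloor$ for $q \geqslant 3$ is exactly that analysis made explicit.
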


\subsection{Toric (singular) intersection of two quadrics in $\P^4$} \label{surfaceS4}

Let us fist suppose that $p > 2$. Choose quadratic non-residues $b \in \F{q}$ and $a := a_0 + a_1\sqrt{b} \in \F{q^2}$ (for some $a_0, a_1 \in \F{q}$) and consider the following intersection of two $\F{q}$-quadrics:
$$
S_4\!: \begin{cases}
x_0^2 + bx_1^2 - a_0(y_0^2 + by_1^2) - 2a_1by_0y_1 = z^2,\\
\EQS\!: 2x_0x_1 - a_1(y_0^2 + by_1^2) - 2a_0y_0y_1 = 0
\end{cases}
\subset \quad \P^4_{(x_0:x_1:y_0:y_1:z)}.
$$

\noindent Note that the affine open subset $U := S_4 \setminus \{z = 0\}$ is the Weil restriction (with respect to the extension $\F{q^2}/\F{q}$) of the $\F{q^2}$-conic 
$$
C_2\!: x^2 - a y^2 = 1 \ \subset \ \A{2}_{(x,y)} \qquad \textrm{if} \qquad 
x = x_0 + x_1\sqrt{b},\qquad y = y_0 + y_1\sqrt{b}.
$$
At the same time, $C_2$ is isomorphic to the torus $T_2$ (\cite[Theorem 7]{MyArticle2019}).

For $p=2$ we can take elements $b \in \F{q}$, $a \in \F{q^2}$ such that $\Tr_{\mathbb{F}_q/\mathbb{F}_2}(b) = \Tr_{\F{q^2}/\mathbb{F}_2}(a) = 1$. As is well known, the equation $x^2 + x + b$ (resp. $x^2 + x + a$) has no roots over $\F{q}$ (resp. $\F{q^2}$). Thus there are not any problems to write out the equations of $C_2$ and $S_4$ in even characteristic.

For $i \in \Z/4$ we enumerate the lines $\h{L}_i$ of $S_4 \cap \{z=0\}$ such that $\h{P}_i := \h{L}_{i+1} \cap \h{L}_{i+2}$ is a point. Also, let
$$
L_i := pr(\h{L}_i),\qquad P_i := pr(\h{P}_i) = L_{i+1} \cap L_{i+2}, \qquad
\textrm{where} \qquad pr\!: S_4 \to \EQS \ \subset \ \P^3_{(x_0:x_1:y_0:y_1)} 
$$
is the well-defined projection of degree $2$ onto the {\it elliptic quadratic surface} $\EQS$ (from Table \ref{toricSurfaces}). Finally, let 
$$
\h{\bfL}_4 := \sum_{i=0}^3\h{L}_i \qquad \mathrm{and} \qquad \bfP_{\!4} := \{P_i\}_{i=0}^3.
$$

\begin{remark}
	The surface $S_4$ is toric with respect to the torus $T_4 \simeq U$ $($see {\rm\cite[Theorem 8]{MyArticle2019})} and the lines $\widehat{L}_i$ $($resp. $\widehat{P}_i)$ are the unique $T_4$-invariant curves $($resp. points$)$ on $S_4$. Moreover, they are $\F{q}$-conjugate.
\end{remark}

\noindent Recall that the surface $\EQS$ is also toric for $T_4$. 

\begin{lemma}[\mbox{\cite[Table 6]{Derenthal}}]
	We have:
	\begin{enumerate} 
		\item The points $\widehat{P}_i$ are the unique singularities on $S_4$ $($of type $\rmA_1);$
		\item $\varphi_{min}\!: S_4^\prime \to S_4$ is the simultaneous blowing up at them;
		\item $\widehat{L}_i$ are the unique lines on $S_4$. 
	\end{enumerate}
\end{lemma}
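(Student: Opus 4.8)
The plan is to read off all three statements from the toric description of $S_4$. By the preceding remark together with Table~\ref{toricSurfaces} and \cite[Theorem 8]{MyArticle2019}, $S_4$ is the non-split toric del Pezzo surface attached to the pair $(Pol_4, \varPhi_4^{\,t})$; over $\overline{\F{q}}$ its fan is the normal fan of $Pol_4$, whose ray generators are the vertices of the dual polygon $Pol_4^\circ = Pol_8 = [-1,1]^2$, namely $(1,1)$, $(-1,1)$, $(-1,-1)$, $(1,-1)$. The four maximal cones correspond to the torus-fixed points $\h{P}_i$ and the four rays to the torus-invariant curves $\h{L}_i$. The cyclic action $\varPhi_4^{\,t}$ permutes these rays and cones, and this is exactly the Frobenius action, so every geometric statement proved over $\overline{\F{q}}$ descends to the $\F{q}$-structure without change (only permuting the indices $i$).

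For statements 1 and 2 I would apply the standard smoothness and resolution tests to the cones, as in \cite{CoxLittleSchenck}. The two generators $(1,1)$ and $(1,-1)$ of a maximal cone satisfy $\det\left(\begin{smallmatrix} 1 & 1 \\ 1 & -1\end{smallmatrix}\right) = -2$, so the cone has index $2$ and its local model is the cyclic quotient $\tfrac12(1,1)$, i.e.\ an $\rmA_1$-singularity, sitting precisely at the torus-fixed point $\h{P}_i$; every lower-dimensional orbit is smooth. Inserting the primitive vector $(1,0)$ (and its rotations) bisects each singular cone into two unimodular cones, so the minimal resolution adds exactly one ray per cone, with a single exceptional $(-2)$-curve $E_i$, reconfirming type $\rmA_1$. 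Hence $\varphi_{min}$ is the simultaneous blow-up of the four $\h{P}_i$, and the refined fan now has the $8$ rays through the $8$ boundary points of $Pol_8$, so $S_4^\prime$ is the smooth toric weak del Pezzo surface of degree $4$.

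For statement 3 I would first isolate the lines among the eight torus-invariant curves of $S_4^\prime$. The wall relations $(1,0)+(0,1)=(1,1)$ and $(1,1)+(1,-1)=2(1,0)$ give $\wt{L}_i^2 = -1$ and $E_i^2 = -2$, whence, using $-K_{S_4^\prime}\cd D_\rho = D_\rho^2 + 2$ on a smooth complete toric surface, the anticanonical degrees $-K_{S_4^\prime}\cd\wt{L}_i = 1$ and $-K_{S_4^\prime}\cd E_i = 0$. Since $\varphi_{min}$ is crepant, the projection formula yields $-K_{S_4}\cd\h{L}_i = -K_{S_4^\prime}\cd\wt{L}_i = 1$, so the $\h{L}_i$ are lines while the $E_i$ are contracted. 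Conversely, the strict transform $\wt{\ell}$ of any line $\ell \subset S_4$ is an irreducible curve with $-K_{S_4^\prime}\cd\wt{\ell} = -K_{S_4}\cd\ell = 1$ (crepancy again); the Hodge index inequality $(-K_{S_4^\prime}\cd\wt{\ell})^2 \geqslant K^2_{S_4^\prime}\cdot\wt{\ell}^{\,2}$ forces $\wt{\ell}^{\,2} < 1$, and adjunction $\wt{\ell}^{\,2} = 2p_a(\wt{\ell}) - 1$ then pins down $\wt{\ell}^{\,2} = -1$. Thus every line on $S_4$ pulls back to a $(-1)$-curve on $S_4^\prime$.

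The main obstacle is the closing counting step: showing that the only $(-1)$-curves on $S_4^\prime$ are the four $\wt{L}_i$. Here I would invoke the toric negativity principle: on a smooth complete toric surface every irreducible curve of negative self-intersection spans an extremal ray of the Mori cone, which is generated by the torus-invariant divisors, and hence is itself torus-invariant. Consequently every $(-1)$-curve of $S_4^\prime$ lies among the eight invariant curves, of which exactly the four $\wt{L}_i$ have self-intersection $-1$; their images are the four distinct, Frobenius-conjugate lines $\h{L}_i$, and there are no others. A purely non-toric alternative for statement 1 is the Jacobian criterion applied directly to the two defining quadrics, verifying that the singular locus consists of the four nodes $\h{P}_i$; but the line count in statement 3 is obtained most cleanly by the toric argument above (equivalently, by specializing the classification of the $4\rmA_1$ quartic del Pezzo surface in \cite[Table 6]{Derenthal} to the cyclic $\F{q}$-structure).
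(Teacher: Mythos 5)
Your proposal is correct, but it takes a genuinely different route from the paper: the paper supplies no argument for this lemma at all — it is justified purely by the citation to the classification table \cite[Table 6]{Derenthal} (the $4\rmA_1$ quartic del Pezzo surface), exactly as the analogous lemma for $S_3$ is justified by Table 7 there. Your self-contained toric verification checks out: the fan of $S_4$ over $\overline{\F{q}}$ is the normal fan of $Pol_4$, with rays through the vertices $(\pm 1,\pm 1)$ of $Pol_4^\circ = Pol_8$; each of the four maximal cones has index $2$, hence a $\frac12(1,1)$-point, i.e.\ an $\rmA_1$-singularity at each torus-fixed point; inserting the rays through $(\pm 1,0),(0,\pm 1)$ gives the crepant subdivision realizing $\varphi_{min}$ as the simultaneous blow-up; and the wall relations, adjunction, the projection formula, the Hodge index inequality, and the fact that an irreducible curve of negative self-intersection on a smooth complete toric surface must be torus-invariant together identify the four $\h{L}_i$ as the only lines. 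What your approach buys is transparency: it explains why the statements hold and makes explicit the descent from the split setting (where \cite{Derenthal} works) to the non-split $\F{q}$-form via the Frobenius action $\varPhi_4^{\,t}$ — precisely the split-to-non-split issue that the paper only discusses in the proof of Theorem \ref{theSurfaceS3}. What the citation buys is brevity and deference to an established classification. One caveat: your starting point, the identification of the explicitly defined intersection of quadrics $S_4 \subset \P^4$ with the toric surface of the pair $(Pol_4, \varPhi_4^{\,t})$, is statement 2 of Theorem \ref{theSurfaceS4}, which the paper states only \emph{after} this lemma (and proves only by analogy with Theorem \ref{theSurfaceS3}); there is no genuine circularity, since that theorem does not depend on the lemma, but your argument should cite it explicitly rather than resting only on the preceding remark and Table \ref{toricSurfaces}.
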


The following theorem is an analogue of Theorem \ref{theSurfaceS3}, hence its statements are proved in a similar way. Unfortunately, we did not find quite exact references.

\begin{theorem} \label{theSurfaceS4} We have:
	\begin{enumerate}
		\item $S_4$ is the unique $($up to an $\F{q}$-isomorphism$)$ toric del Pezzo surface of degree $4$ with respect to the torus $T_4;$
		\item $S_4$ is the non-split toric surface associated with the pair $(Pol_4, \varPhi_4^{\,t});$
		\item \label{statement3S4} $S_4$ is the quotient $\EQS/\sigma$ under an automorphism $\sigma$ of $\EQS$ $\big($in particular, $\sigma \in \mathrm{PGL}(4, \F{q})\big)$, whose fixed point set is $\bfP_{\!4};$ 
		\item \label{statement4S4} $S_4^\prime$ is the blowing up of $\EQS$ at the set $\bfP_{\!4}$ of all $($i.e., four $\F{q}$-conjugate$)$ $T_4$-invariant points.
	\end{enumerate}	
\end{theorem}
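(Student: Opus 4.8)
The plan is to pass entirely to the combinatorics of $Pol_4$ and its normal fan, reproducing for $Pol_4$ the arguments that the references behind Theorem~\ref{theSurfaceS3} make for $Pol_3$, and supplying by hand the two facts for which no citation is available. By the polygon--surface correspondence, a toric del Pezzo surface of degree $4$ is $S_P$ for a reflexive polygon $P$ with $|P \cap \Z^2| - 1 = \dim|{-}K_{S_P}| = K_{S_P}^2 = 4$, i.e.\ with exactly $5$ lattice points, and prescribing the torus $T_4$ amounts to making the Frobenius act through an element of order $4$ in $\Aut(P) \subset \GL(2,\Z)$. By the list in Figure~\ref{symmetricReflexivePolygons} only $Pol_4$ and $Pol_8$ admit an order-$4$ action, and $Pol_8$ has $9$ lattice points; hence $P = Pol_4$. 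So statements~1 and~2 come down to the uniqueness, up to conjugation, of the order-$4$ action, exactly as in the proof of Theorem~\ref{theSurfaceS3}.

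For that uniqueness I would write $Pol_4$ with vertices $\pm e_1, \pm e_2$. Its lattice automorphisms are the signed permutation matrices, so $\Aut(Pol_4)$ is the dihedral group of order $8$; its only elements of order $4$ are $\varPhi_4 = \bigl(\begin{smallmatrix} 0 & -1 \\ 1 & 0 \end{smallmatrix}\bigr)$ and $\varPhi_4^{-1} = \varPhi_4^{\,t}$, and these are conjugate via any reflection. Thus the $T_4$-structure on $S_{Pol_4}$ is unique up to $\F{q}$-isomorphism (statement~1) and is the one attached to $(Pol_4, \varPhi_4^{\,t})$ (statement~2).

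Statement~4 I would settle by a fan computation in $N := \Z^2$. Since $\EQS \leftrightarrow Pol_8 = Pol_4^\circ$, its fan has the four rays $\pm e_1, \pm e_2$ with four smooth cones, whose torus-fixed points are the conjugate set $\bfP_{\!4}$. The fan of $S_4 \leftrightarrow Pol_4$ instead has the rays $(\pm 1, \pm 1)$ (the vertices of $Pol_4^\circ$), and a cone such as $\mathrm{Cone}\big((1,1),(-1,1)\big)$ has index $2$, giving the four $\rmA_1$ points. Resolving them adds the midpoint rays $\pm e_1, \pm e_2$, so $S_4^\prime$ has the eight rays $\{\pm e_1, \pm e_2, (\pm 1, \pm 1)\}$ and is smooth; but blowing up $\EQS$ at $\bfP_{\!4}$ subdivides its four cones by the rays $(\pm 1, \pm 1)$ and produces the very same fan. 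Hence $S_4^\prime$ is the blow-up of $\EQS$ at $\bfP_{\!4}$.

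Statement~3 is the subtle one, and I would prove it by exhibiting $S_4$ as a toric quotient of $\EQS$. Let $N_0 := \Z(1,1) + \Z(1,-1) = \{(a,b) : a \equiv b \bmod 2\} \subset N$, of index $2$. For the fan $\Sigma_4$ of $S_4$ each cone is smooth with respect to $N_0$ but of index $2$ with respect to $N$; consequently $X_{\Sigma_4, N_0} \cong \EQS$ (a basis of $N_0$ carries the rays to $\pm f_1, \pm f_2$ and $\varPhi_4$ to the same order-$4$ matrix), whereas $X_{\Sigma_4, N} = S_4$. The sublattice inclusion then presents $S_4 = \EQS/\sigma$ with $\sigma$ a generator of $\mathrm{Hom}(N/N_0, \G_m) \cong \mu_2$, and its fixed points are precisely the torus-fixed points over the four non-smooth cones, namely $\bfP_{\!4}$. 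Because $\varPhi_4\colon (a,b) \mapsto (-b,a)$ preserves the parity condition defining $N_0$, the whole construction is Frobenius-equivariant, so $\sigma$ is defined over $\F{q}$ and, $\EQS$ being a quadric in $\P^3$, lies in $\mathrm{PGL}(4,\F{q})$.

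The main obstacle is exactly this last step: one has to be sure the sublattice quotient is compatible with the non-split Frobenius (the parity check for $\varPhi_4$), and one must treat $p = 2$ separately, where $\mu_2$ is non-reduced and the involution $\sigma$ must be described directly rather than as a naive orbit quotient. Everything else reduces to the two finite verifications above: the conjugacy of the order-$4$ matrices and the coincidence of the two eight-ray fans.
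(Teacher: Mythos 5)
Your proposal is sound for odd $q$, and it actually supplies more detail than the paper does. The paper gives this theorem no self-contained proof: it states only that the theorem is an analogue of Theorem~\ref{theSurfaceS3} and is ``proved in a similar way,'' conceding that exact references for the split case are missing. The single substantive ingredient of that scheme --- that the order-$4$ action on $Pol_4$ is unique up to conjugation in $\Aut(Pol_4)$, which is what licenses the passage from split references to the $T_4$-twisted setting --- is precisely your signed-permutation computation, so on statements 1 and 2 you and the paper coincide. For statements 3 and 4, where the paper defers to sources it could not locate, your arguments are correct and genuinely fill the hole: the eight-ray fan coincidence identifies $S_4^\prime$ with the blow-up of $\EQS$ at the four $T_4$-invariant points, and the index-$2$ sublattice $N_0 \subset N$ realizes $\EQS \to S_4$ as a $\mu_2$-quotient. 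Your descent of $\sigma$ to $\F{q}$ is also cleaner here than in the $S_3$ analogue: the kernel $\mu_2$ has a unique nontrivial geometric point, so Galois fixes it automatically, whereas for $S_3$ the two nontrivial elements of $\mu_3$ can be swapped by Frobenius.

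The gap you flag at $p=2$ is genuine, and it is worse than your closing sentence suggests: it cannot be repaired by ``describing the involution $\sigma$ directly,'' because statement~\ref{statement3S4} is false as literally stated in characteristic $2$. Over $\overline{\F{2}}$ one has $\EQS \cong \P^1 \times \P^1$; an order-$2$ automorphism either preserves the two rulings --- then it is $\tau_1 \times \tau_2$ with each nontrivial $\tau_i$ of order $2$ in $\mathrm{PGL}(2,\overline{\F{2}})$, hence unipotent with exactly one fixed point, so the fixed locus is a single point or a whole fiber --- or it swaps the rulings, in which case the fixed locus is the graph of an isomorphism $\P^1 \to \P^1$, a curve. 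No involution has four isolated fixed points, so no $\sigma$ with fixed point set $\bfP_{\!4}$ can exist when $p=2$; in that case the degree-$2$ map $\EQS \to S_4$ produced by your sublattice construction is purely inseparable, a quotient by the infinitesimal group scheme $\mu_2$ (whose fixed point \emph{scheme} is still the four points $\bfP_{\!4}$). So your argument proves the theorem for odd $q$, and what remains for even $q$ is not a missing step but a necessary reformulation of statement~\ref{statement3S4} as a group-scheme quotient --- a defect of the theorem's formulation that the paper, having deferred the proof entirely, never confronts.
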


From \cite[Theorem 14]{MyArticle2019} or one of Statements \ref{statement3S4}, \ref{statement4S4} of Theorem \ref{theSurfaceS4} it follows that the Picard $\F{q}$-number of $S_4$ is equal to $1$. Since the Fano index of $S_4$ is also $1$, we obtain

\begin{lemma} The Picard $\F{q}$-group of the surface $S_4$ is equal to
	$$
	\Pic(S_4) = \langle{-}K_{S_4}\rangle \simeq \Z.
	$$
\end{lemma}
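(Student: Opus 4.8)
The plan is to deduce the statement formally from the two facts recorded just above it: that the Picard $\F{q}$-group $\Pic(S_4)$ is free abelian (Lemma \ref{toricProperties}, third item) of rank $1$ (its Picard $\F{q}$-number being $1$), and that the Fano index of $S_4$ equals $1$ (Lemma \ref{toricProperties}, first item, since $K_{S_4}^2 = 4 \leqslant 7$). The argument is word-for-word the one already used for $S_3$.

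First I would observe that a free abelian group of rank $1$ is isomorphic to $\Z$ and has a unique generator up to sign; thus $\Pic(S_4) = \langle G \rangle \simeq \Z$ for some class $G$. Since $S_4$ is a del Pezzo surface, $-K_{S_4}$ is (very) ample, and inside the rank-one group $\Pic(S_4)$ the ample classes form one of the two half-lines determined by $G$. Replacing $G$ by $-G$ if necessary, I may assume $G$ ample and write $-K_{S_4} = nG$ with $n \in \N$, $n \geqslant 1$.

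Next I would invoke the Fano index to pin down $n$. The equality $-K_{S_4} = nG$ rewrites as $K_{S_4} \sim n(-G)$, so $H := -G$ is a Cartier divisor on $S_4$ (every class in $\Pic(S_4)$ is Cartier by definition) with $K_{S_4} \sim nH$. By the maximality in the definition of the Fano index this forces $n \leqslant 1$, whence $n = 1$. Therefore $-K_{S_4} = G$ generates the group, giving $\Pic(S_4) = \langle -K_{S_4} \rangle \simeq \Z$.

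There is no genuine obstacle here: the mathematical content sits entirely in the two cited facts (freeness together with rank one, and the value of the Fano index), after which the conclusion is a one-line manipulation. The only points needing a little care are the positivity bookkeeping, namely that $-K_{S_4}$ is a \emph{positive} multiple of the ample generator so that $n \geqslant 1$, and the remark that by working inside $\Pic$ rather than the full Weil divisor class group all divisors in sight are Cartier, so that the comparison with the Fano index is legitimate.
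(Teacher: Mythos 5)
Your proposal is correct and takes essentially the same approach as the paper: the paper states no separate proof at all, deducing the lemma directly from the Picard $\F{q}$-number of $S_4$ being $1$ (established in the sentence preceding the lemma) together with the Fano index being $1$, which are exactly the two facts you formalize. Your write-up merely makes explicit the routine bookkeeping (rank-one freeness, ampleness of $-K_{S_4}$ forcing a positive multiple of the generator, and maximality of the Fano index pinning that multiple to $1$) that the paper leaves implicit.
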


It is well known that besides $T_4$ the surface $\EQS$ is toric for the torus $T_{2.c}$ (from \cite[Theorem 8]{MyArticle2019}). Let $M_i$ be the lines outside $T_{2.c}$ and $R_i$ be their intersection points. The blowing up of $\EQS$ at the $\F{q}$-point $R_0$ (or $R_2$) gives the nonsingular del Pezzo surface $\D_7$ of degree $7$, which is also the blowing up of $\P^2$ at a pair $\bfQ_2 = \{Q_1, Q_2\}$ of $\F{q}$-conjugate points. Thus we have the diagram
$$
S_4 \stackrel{ \raisebox{0.2cm}{$\varphi_{min}$} }{\longleftarrow} S_4^\prime \stackrel{ \raisebox{0.2cm}{$bl_{\bfP_{\!4}}$} }{\longrightarrow} \EQS \stackrel{ \raisebox{0.2cm}{$bl_{R_0}$} }{\longleftarrow} \D_7 \stackrel{ \raisebox{0.2cm}{$bl_{\bfQ_{2}}$} }{\longrightarrow} \P^2,
$$
where $bl_{\bfP_{\!4}}$, $bl_{R_0}$, $bl_{\bfQ_2}$ are the corresponding blowing up maps. Besides, let 
$$
\chi := bl_{\bfP_{\!4}} \circ \inv{\varphi_{min}} \qquad \chi\!: S_4 \dashrightarrow \EQS 
,\qquad\qquad
\psi := bl_{\bfQ_{2}} \circ \inv{bl_{R_0}}\qquad \psi\!: \EQS \dashrightarrow \P^2
,\qquad\qquad 
$$
$$
\varphi := \psi \circ \chi \qquad \varphi\!: S_4 \dashrightarrow \P^2.
$$
Further,
$$
\h{M}_i := \chi^*(M_i), \qquad \h{\bf{M}}_2 := \h{M}_1 + \h{M}_2, \qquad\qquad \widetilde{L}_i := \psi_*(L_i), \qquad \widetilde{\bfL}_4 := \sum_{i=0}^3 \widetilde{L}_i.
$$
Finally, $L$ is the line through the points $Q_j = \psi(M_j)$ and also we identify $P_i$ with $\psi(P_i)$. For more clarity on what is going on, see Figures \ref{linesLionP2}, \ref{linesLiMionEQS}, where arrows denote the Frobenius action. 

Repeating the arguments used for the proof of Corollary \ref{AnticanLinearSystemS3}, we obtain the following result. Let us not write out its proof, because it is also very technical and does not contain new ideas.

\begin{corollary} The anticanonical linear system of $S_4$ is equal to
	$$
	|{-}K_{S_4}| = \varphi^*(\L) - \h{\bfL}_4 - 2\h{\bf{M}}_2, \qquad \textrm{where} \qquad \L := |\widetilde{\bfL}_4 - \bfP_{\!4} - 2\bfQ_2|
	$$
	is the (incomplete) linear system of all (possibly reducible or singular) quartics $C \subset \P^2$ passing through $\bfP_{\!4}$ and through $\bfQ_2$ with multiplicity at least $2$.
\end{corollary}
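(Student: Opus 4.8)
The plan is to imitate the proof of Corollary \ref{AnticanLinearSystemS3}, pushing the incomplete system $\L$ through the four birational maps of the diagram and keeping careful track of moving and fixed parts by means of the usual rules for direct and inverse images of (possibly incomplete) linear systems \cite[\S II.5-6, \S IV.2]{Hartshorne}. Since $\varphi = bl_{\bfQ_{2}} \circ \inv{bl_{R_0}} \circ bl_{\bfP_{\!4}} \circ \inv{\varphi_{min}}$ and pulling back along $\inv{bl_{R_0}}$ (resp. $\inv{\varphi_{min}}$) is the same as pushing forward along $bl_{R_0}$ (resp. $\varphi_{min}$), we have
$$
\varphi^*(\L) = (\varphi_{min})_* \circ bl_{\bfP_{\!4}}^* \circ (bl_{R_0})_* \circ bl_{\bfQ_{2}}^*(\L).
$$
As a preliminary check, the quartics with four simple and two double base points form a system of the expected dimension $14 - 4 - 2\cd 3 = 4 = K^2_{S_4} = \dim|{-}K_{S_4}|$.

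First I would pull $\L$ back by $bl_{\bfQ_{2}}$. Writing $H$ for the pullback of a line, the double base points $\bfQ_2$ produce the fixed part $2E_{\bfQ_2}$, so $bl_{\bfQ_{2}}^*(\L)$ is a moving system of class $4H - 2E_{\bfQ_2}$ through the lifts of $\bfP_{\!4}$, plus the fixed divisor $2E_{\bfQ_2}$. Pushing forward by $(bl_{R_0})_*$ contracts $E_{R_0}$ and sends $E_{Q_j} \mapsto M_j$. The decisive point — and the only place where the argument genuinely departs from the linear tower of Corollary \ref{AnticanLinearSystemS3} — is that $\psi$ blows the two lines $M_1, M_2$ down precisely onto the double base points $\bfQ_2$; consequently the fixed part survives as $2\mathbf{M}_2$, where $\mathbf{M}_2 := M_1 + M_2$, while the moving part acquires class $2\mathbf{M}_2$ through $\bfP_{\!4}$. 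Since $\EQS$ has Fano index $2$ (Table \ref{toricSurfaces}), with $\mathbf{M}_2 = -\tfrac12 K_{\EQS}$, this moving part is exactly the anticanonical system through the four points, so that
$$
\psi^*(\L) = \big| {-}K_{\EQS} - \bfP_{\!4} \big| + 2\mathbf{M}_2.
$$

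Next I would pull back by $bl_{\bfP_{\!4}}$ and push forward by $\varphi_{min}$. Put $E_{\bfP_{\!4}} := \sum_{i=0}^3 E_{P_i}$. Using $K_{S_4^\prime} = bl_{\bfP_{\!4}}^*(K_{\EQS}) + E_{\bfP_{\!4}}$, the moving part pulls back to $|{-}K_{S_4^\prime}| + E_{\bfP_{\!4}}$, while the fixed part becomes $2\,bl_{\bfP_{\!4}}^*(\mathbf{M}_2)$. As $\varphi_{min}$ is crepant, $(\varphi_{min})_*|{-}K_{S_4^\prime}| = |{-}K_{S_4}|$; and by definition of $\chi$ we have $(\varphi_{min})_*\,bl_{\bfP_{\!4}}^*(\mathbf{M}_2) = \chi^*(\mathbf{M}_2) = \h{\bf{M}}_2$. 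It remains to compute $(\varphi_{min})_*(E_{\bfP_{\!4}})$. Because $\Pic(S_4) = \langle{-}K_{S_4}\rangle \simeq \Z$, this $\F{q}$-rational class is an integral multiple of $-K_{S_4}$, and crepancy together with the projection formula gives
$$
{-}K_{S_4} \cd (\varphi_{min})_*(E_{\bfP_{\!4}}) = {-}K_{S_4^\prime} \cd E_{\bfP_{\!4}} = 4 = K^2_{S_4},
$$
whence $(\varphi_{min})_*(E_{\bfP_{\!4}}) = {-}K_{S_4} = \h{\bfL}_4$ — the exact analogue of $(\varphi_{min})_*(E_{\bfQ_3}) = \h{\bfL}_3$ in Corollary \ref{AnticanLinearSystemS3}. (Here one also notes that $\varphi_{min}$ contracts the four $(-2)$-curves arising as the strict transforms on $S_4^\prime$ of the rulings $L_i \subset \EQS$, rather than the $(-1)$-curves $E_{P_i}$, so that the latter really do survive in $S_4$.) Collecting the three contributions yields $\varphi^*(\L) = |{-}K_{S_4}| + \h{\bfL}_4 + 2\h{\bf{M}}_2$, which is the claimed identity; the geometric description of $\L$ as quartics through $\bfP_{\!4}$ and doubly through $\bfQ_2$ then follows from \cite[Exercise V.3.2]{Hartshorne} exactly as for $S_3$.

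I expect the main obstacle to be the middle step at $\EQS$. In Corollary \ref{AnticanLinearSystemS3} the three surfaces sat in a single tower of blow-downs, so one resolution dominated everything; here $\EQS$ is reached by a genuine elementary transformation $\psi$, and the pushforward $(bl_{R_0})_*$ must be matched with the pullback $bl_{\bfP_{\!4}}^*$ taken on the opposite side of $\EQS$. The delicate bookkeeping is to see that the double base points of $\L$ force the contracted lines $M_1, M_2$ into $\psi^*(\L)$ with multiplicity exactly $2$ — this is the origin of the term $2\h{\bf{M}}_2$ — while the simple base locus $\bfP_{\!4}$, being disjoint from the indeterminacy of $\psi$, passes through the transformation unchanged.
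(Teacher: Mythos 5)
Your proposal is correct and is essentially the proof the paper intends but omits: the text states only that the result follows by repeating the argument of Corollary \ref{AnticanLinearSystemS3}, and that is precisely what you do, with the right adaptation of the pullback/pushforward bookkeeping to the zigzag $S_4 \leftarrow S_4^\prime \to \EQS \leftarrow \D_7 \to \P^2$ (fixed part $2E_{\bfQ_2} \mapsto 2\h{\bf{M}}_2$, exceptional curves $E_{\bfP_{\!4}} \mapsto \h{\bfL}_4$, crepancy of $\varphi_{min}$, and the middle step where the double base points at $\bfQ_2$ become the multiplicity-two fixed component along $M_1, M_2$). Your projection-formula derivation of $(\varphi_{min})_*(E_{\bfP_{\!4}}) = \h{\bfL}_4$ is a minor (and sound) elaboration of the identity that the paper simply asserts in the $S_3$ case.
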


\begin{figure}
	\begin{minipage}{0.49\linewidth}
		\centering
		\begin{tikzpicture}[scale=1.25]
		\begin{scope}[very thick]	
		\draw (1.7, 2) node[right] {\hspace{0.1cm}$\widetilde{L}_{2}$} -- (3.75, -1.75);
		\draw (-0.5, -0.4) node[below] {$\widetilde{L}_{1}$} -- (4, 1.5);
		\draw (-0.5, 0.4) node[above] {$\widetilde{L}_{0}$} -- (4, -1.5); 	
		\draw (1.7, -2) node[right] {\hspace{0.1cm}$\widetilde{L}_{3}$} -- (3.75, 1.75); 								
		\draw (3.5, 2) node[left] {$L$} -- (3.5, -2);
		\end{scope}
		
		\fill (2.35, 0.8) node[left] { \raisebox{0.4cm}{\hspace{-1cm} $P_0$} } circle (0.075);
		\fill (2.8, 0) node[right] {$P_1$} circle (0.075);
		\fill (2.35, -0.8) node[left] { \raisebox{-0.6cm}{\hspace{-1cm} $P_2$} } circle (0.075);	
		\fill (0.45, 0) node[above] { \raisebox{0.1cm}{$P_3$} } circle (0.075);
		\fill (3.5, 1.3) node[right] { \raisebox{-0.8cm}{$Q_2$} } circle (0.075);
		\fill (3.5, -1.3) node[right] { \raisebox{0.6cm}{$Q_1$} } circle (0.075);	
		
		\begin{scope}[red, densely dashed]
		\draw[->-=.7] (1.6, 0.5) .. controls (1.9, 0.1) and (2.3, 0.1) .. (2.6, 0.4);
		\draw[->-=.8] (3.1, -0.5) .. controls (3.5, -0.25) and (3.5, 0.25) .. (3.1, 0.5);
		\draw[->-=.7] (1.25, -0.3) .. controls (1.3, -0.2) and (1.3, 0.2) .. (1.25, 0.3);
		\draw[->-=.7] (2.6, -0.4) .. controls (2.3, -0.1) and (1.9, -0.1) .. (1.6, -0.5);
		\draw[->-=.8] (3.5, 0) .. controls (4.25, -0.5) and (4.25, 0.5) .. (3.5, 0); 
		\end{scope}
		\end{tikzpicture}
		\caption{The lines $\widetilde{L}_i, L \subset \P^2$}
		\label{linesLionP2}
	\end{minipage}	
	\begin{minipage}{0.49\linewidth}
		\centering
		\begin{tikzpicture}[scale=1.25]
		\begin{scope}[very thick]
		\draw (-1.5, -2) node[right] {$L_0$} -- (-1.5, 2);
		\draw (-0.5, -2) node[right] {$L_2$} -- (-0.5, 2);
		\draw (0.5, -2) -- (0.5, 2) node[left] {$M_0$};
		\draw (1.5, -2) -- (1.5, 2) node[left] {$M_2$};
		
		\draw (-2, -1.5) node[above] {$M_3$} -- (2, -1.5);
		\draw (-2, -0.5) node[above] {$M_1$} -- (2, -0.5);
		\draw (-2, 0.5) -- (2, 0.5) node[below] {$L_3$};
		\draw (-2, 1.5) -- (2, 1.5) node[below] {$L_1$};	
		\end{scope}
		
		\fill (-1.5, 1.5) node[left] { \raisebox{0.7cm}{$P_3$} } circle (0.075);
		\fill (-0.5, 1.5) node[left] { \raisebox{0.7cm}{$P_0$} } circle (0.075);
		\fill (-0.5, 0.5) node[right] { \raisebox{-1cm}{$P_1$} } circle (0.075);
		\fill (-1.5, 0.5) node[left] { \raisebox{0.7cm}{$P_2$} } circle (0.075);	
		
		\fill (1.5, -1.5) node[right] { \raisebox{-1cm}{$R_1$} } circle (0.075);
		\fill (0.5, -1.5) node[right] { \raisebox{-1cm}{$R_2$} } circle (0.075);
		\fill (0.5, -0.5) node[left] { \raisebox{0.7cm}{$R_3$} } circle (0.075);
		\fill (1.5, -0.5) node[right] { \raisebox{-1cm}{$R_0$} } circle (0.075);
		
		\begin{scope}[red, densely dashed]
		\draw[->-=.7] (-1, 1.5) -- (-0.5, 1);
		\draw[->-=.7] (-0.5, 1) -- (-1, 0.5);
		\draw[->-=.7] (-1, 0.5) -- (-1.5, 1);
		\draw[->-=.7] (-1.5, 1) -- (-1, 1.5);
		
		\draw[-<-=.35, ->-=.75] (1, -1.5) arc (0: 90: 0.5);
		\draw[-<-=.35, ->-=.75] (1, -0.5) arc (180: 270: 0.5);
		\end{scope}
		\end{tikzpicture}
		\caption{The lines $L_i, M_i \subset \EQS$}
		\label{linesLiMionEQS}
	\end{minipage}
\end{figure}

One can easily check that

\begin{remark}
	Any $C \in \L$ contains at most one absolutely irreducible $\F{q}$-curve $($of geometric genus $g \leqslant 1)$ different from $L$.
\end{remark}

\begin{remark} \label{ellipticCurvesFromL}
	Given $C \in \L$, the divisor $D := \varphi^*(C) - \h{\bfL}_4 - 2\h{\bf{M}}_2$ is an elliptic curve if and only if $C$ is one of the following quartics:
	\begin{enumerate}
		\item $E \cup L$, where $E \subset \P^2$ is an elliptic curve passing through $\bfP_{\!4}, \bfQ_2;$
		\item \label{ellipticQuarticFromL} An irreducible quartic for which $Q_1, Q_2$ are the unique singularities (namely nodes).
	\end{enumerate}
	Moreover, in the first case $\varphi\!: D \to E$ is an isomorphism and in the second one $\varphi\!: D \to C$ is the blowing up at $\bfQ_2$ such that $|\inv{\varphi}(Q_j)| = 2$.	
\end{remark}

\begin{lemma} \label{cubicCurveThroughP4Q2}
	Let $E \subset \P^2$ be an elliptic $\F{q}$-curve passing through $\bfP_{\!4}, \bfQ_2$ and $\O \in E(\F{q})$. Then
	$$P_0-P_2 = P_1-P_3 = Q_1-Q_2$$ 
	is an $\F{q}$-point of order $2$ $($with respect to $\O$ as the neutral element of the chord-tangent group law on $E)$.
\end{lemma}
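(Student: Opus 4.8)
The plan is to translate the incidence geometry of Figure~\ref{linesLionP2} into linear equivalences on $E$ and then read off the three asserted equalities inside $\Pic(E)$. By the first alternative of Remark~\ref{ellipticCurvesFromL} the cubic $E$ passes through all of $\bfP_{\!4}$ and $\bfQ_2$, and the six points $P_0,P_1,P_2,P_3,Q_1,Q_2$ are exactly the pairwise intersections of the four lines $\wt{L}_0,\wt{L}_1,\wt{L}_2,\wt{L}_3$, i.e. the vertices of a complete quadrilateral. Tracing the incidences through $\psi$ back to the configuration on $\EQS$ (each $L_i$ carries two of the $P$'s and meets precisely one of the contracted lines $M_1,M_2$, which become $Q_1,Q_2$), one sees that every line meets $E$ in three of the six points. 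Since any three collinear points of a plane cubic sum to the hyperplane class $H$, this yields
$$
P_2+P_3+Q_1\ \sim\ P_0+P_3+Q_2\ \sim\ P_0+P_1+Q_1\ \sim\ P_1+P_2+Q_2\ \sim\ H .
$$

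From here the computation is short. Eliminating $Q_2$ between the second and fourth relations gives $P_1+P_2\sim P_0+P_3$, that is $P_0-P_2\sim P_1-P_3$; eliminating $Q_1$ between the first and third gives $P_2+P_3\sim P_0+P_1$, that is $P_0-P_2\sim P_3-P_1=-(P_1-P_3)$. Comparing the two shows $2(P_1-P_3)\sim 0$, so the common class $P_0-P_2=P_1-P_3$ has order dividing $2$; it has order exactly $2$ because $P_0\neq P_2$. Finally, eliminating $P_3$ (and $H$) between the first and second relations leaves $P_2+Q_1\sim P_0+Q_2$, i.e. $Q_1-Q_2\sim P_0-P_2$, so all three differences coincide with this single $2$-torsion class.

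It remains to check that the class is defined over $\F{q}$, which is exactly where the hypothesis $\O\in E(\F{q})$ enters: the chord–tangent law is then $\F{q}$-rational, so the $q$-power Frobenius $F$ acts as a group automorphism of $E$. As $F$ interchanges the conjugate pair $Q_1,Q_2$, we get $F(Q_1-Q_2)=Q_2-Q_1=-(Q_1-Q_2)$, which equals $Q_1-Q_2$ because the class is $2$-torsion; hence the point is $F$-fixed and lies in $E(\F{q})$. The only genuine work is in the first paragraph, namely rigorously establishing the four collinearities — that the six points really form the complete quadrilateral with the stated incidence pattern. Note that no transversality of the $\wt{L}_i$ with $E$ is needed, only the class identity $\wt{L}_i\cap E\sim H$, valid for any line; so once the incidences are justified from the $\EQS$-picture the group-law bookkeeping is automatic, paralleling the computation for $S_3$ in \S\ref{surfaceS3}.
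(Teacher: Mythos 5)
Your proof is correct and takes essentially the same route as the paper's: the same four collinear triples $\{P_0,P_1,Q_1\}$, $\{P_1,P_2,Q_2\}$, $\{P_2,P_3,Q_1\}$, $\{P_3,P_0,Q_2\}$ give equal classes, and the same eliminations yield the $2$-torsion identity $P_0-P_2=P_1-P_3=Q_1-Q_2$. The differences are only presentational: you phrase the relations in $\Pic(E)$ instead of chord--tangent sums, and you spell out the Frobenius argument that the paper compresses into the remark that $Q_1$, $Q_2$ are $\F{q}$-conjugate.
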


\begin{proof}
	By definition, 
	$$
	P_0 + P_1 + Q_1 \: = \: P_1 + P_2 + Q_2 \: = \: P_2 + P_3 + Q_1 \: = \:
	P_3 + P_0 + Q_2.
	$$
	Therefore
	$$
	P_0 + P_1 = P_2 + P_3,\qquad P_1 + P_2 = P_3 + P_0,\qquad P_0 + Q_2 = P_2 + Q_1
	$$
	and hence 
	$$
	P_0 - P_2 \: = \: P_3 - P_1 \: = \: P_2 - P_0 \: = \: Q_2 - Q_1.
	$$
	This is an $\F{q}$-point, because $Q_1,Q_2$ are $\F{q}$-conjugate.
\end{proof}

A quartic $C \in \L$ from Remark \ref{ellipticCurvesFromL}.\ref{ellipticQuarticFromL} gives a geometric interpretation of the group law on the elliptic curve $D$. Note that $C$ is similar to a ({\it twisted}) {\it Edwards quartic} \cite{Bernstein}, because both curves have two nodes. The group law on the latter is represented in \cite[\S 4]{Arene}. An analog for $C$ is defined in the following way. 

For points $R_1, R_2 \in C \setminus \bfQ_2$ let $R_1\cd R_2$ be the eighth intersection point of $C$ with the unique conic passing through $R_1, R_2, P_1, Q_1, Q_2$. If $R_1 = R_2$ (resp. $R_1 = P_1$ or $R_2 = P_1$), then this conic intersects $C$ at $R_1$ (resp. $P_1$) with the intersection number at least $2$ ($3$ if $R_1=R_2=P_1$). Besides, let $\overline{R_1}$ be the third intersection point of $C$ with the unique line passing through $R_1, Q_2$. The points $R_1\cd R_2$ and $\overline{R_1}$ are correctly defined by \cite[\S 3.3, \S 5.3]{Fulton}. Then the addition and subtraction (with $P_0$ as the neutral point) have the form
$$
R_1 + R_2 := \overline{R_1\cd R_2}, \qquad -R_1 := R_1\cd P_3
$$
respectively. This can be proved in the same way as \cite[Theorem 2]{Arene}. Note that sometimes $R_1+R_2$ or $-R_1$ falls into $\bfQ_2$. Finally, $P_2$ is obviously a point of order~$2$.

\begin{theorem}
	For any elliptic $\F{q}$-curve $E \in |{-}K_{S_4}|$ the order $|E(\F{q})|$ is even.
\end{theorem}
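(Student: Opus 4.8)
The plan is to exhibit a single nonzero $\F{q}$-rational point of order $2$ on $E$; by Lagrange's theorem this immediately forces $|E(\F{q})|$ to be even (and, incidentally, excludes supersingular curves when $p=2$, in analogy with Corollary~\ref{supersingularS3}). Rather than splitting into the two cases of Remark~\ref{ellipticCurvesFromL}, I would work uniformly on the quadric $\EQS$. By Statement~\ref{statement4S4} of Theorem~\ref{theSurfaceS4}, $bl_{\bfP_{\!4}}\!: S_4^\prime \to \EQS$ is the blow-up at $\bfP_{\!4}$, whence $-K_{S_4^\prime} = bl_{\bfP_{\!4}}^*(-K_\EQS) - \sum_{i=0}^3 E_{P_i}$, while the crepancy of $\varphi_{min}$ puts the proper transform $E^\prime$ of $E$ into $|{-}K_{S_4^\prime}|$. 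Therefore $\overline{E} := (bl_{\bfP_{\!4}})_*(E^\prime)$ is an anticanonical curve on $\EQS$ passing through each $P_i$ with multiplicity $1$, and $E \cong E^\prime \cong \overline{E}$ as $\F{q}$-curves. Over $\overline{\F{q}}$ the surface $\EQS$ is a smooth quadric with ruling classes $f_1, f_2$, so $-K_\EQS = 2f_1 + 2f_2$ and $\overline{E}$ is a smooth $(2,2)$-curve.

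Next I would read off two divisor relations among the $P_i$ directly from the line configuration, with no recourse to an explicit group law. Since $P_i = L_{i+1} \cap L_{i+2}$ is an honest point, the lines $L_{i+1}, L_{i+2}$ lie in opposite rulings; consequently $L_0, L_2$ belong to one ruling and $L_1, L_3$ to the other. From $P_i = L_{i+1}\cap L_{i+2}$ we then obtain $\{P_0, P_1\} \subset L_2$, $\{P_2, P_3\} \subset L_0$, $\{P_0, P_3\} \subset L_1$, and $\{P_1, P_2\} \subset L_3$. As each ruling satisfies $L_j \cdot \overline{E} = 2$ and $\overline{E}$ meets $L_j$ exactly in the two relevant (distinct) points, restriction gives $L_0|_{\overline{E}} = P_2 + P_3$, $L_2|_{\overline{E}} = P_0 + P_1$, $L_1|_{\overline{E}} = P_0 + P_3$, and $L_3|_{\overline{E}} = P_1 + P_2$. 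Because $L_0 \sim L_2$ and $L_1 \sim L_3$ on $\EQS$, restricting these linear equivalences to $\overline{E}$ yields
\[
P_0 + P_1 \sim P_2 + P_3, \qquad P_0 + P_3 \sim P_1 + P_2.
\]

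Finally I would combine the two relations. Subtracting them gives $2(P_0 - P_2) \sim 0$, so $T := [P_0 - P_2] \in \Pic^0(\overline{E})$ is a $2$-torsion class, and $T \neq 0$ since $P_0 \neq P_2$ on a curve of genus $1$. Moreover the second relation rewrites as $P_0 - P_2 \sim P_1 - P_3$, i.e.\ $T = [P_1 - P_3]$. As the $P_i$ form a single Frobenius orbit (being four $\F{q}$-conjugate $T_4$-invariant points, permuted cyclically by the order-$4$ action $\varPhi_4$), the Frobenius $\phi$ carries $\{P_0, P_2\}$ onto $\{P_1, P_3\}$, hence $\phi(T) = [P_1 - P_3] = T$ (the sign ambiguity is harmless as $T$ is $2$-torsion); thus $T$ is $\F{q}$-rational. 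Under the canonical bijection $E(\F{q}) \simTo \Pic^0(E)(\F{q})$ the class $T$ corresponds to an $\F{q}$-point of exact order $2$, and the evenness of $|E(\F{q})|$ follows.

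The step demanding the most care—and the main obstacle—is the reduction to $\EQS$ in the first paragraph: one must verify that the chosen smooth member $E$ is genuinely carried isomorphically onto a $(2,2)$-curve meeting all four base points $P_i$ simply, rather than being tangent to some $L_j$ or degenerating as in the two cases of Remark~\ref{ellipticCurvesFromL}. Once this uniform model on $\EQS$ is secured, the remaining intersection-theoretic identities and the Frobenius descent are routine and, pleasantly, characteristic-free.
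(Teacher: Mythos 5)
Your proof is correct, and it takes a genuinely different route from the paper's. The paper argues through the planar model $\varphi\!: S_4 \dashrightarrow \P^2$ and splits into the two cases of Remark~\ref{ellipticCurvesFromL}: for $C = E \cup L$ it invokes Lemma~\ref{cubicCurveThroughP4Q2} (chord--tangent relations among $\bfP_{\!4}$, $\bfQ_2$ on the plane cubic), while for the nodal quartic it passes to a Weierstrass form $W\!: y^2 + h(x)y = f(x)$ and descends the $2$-torsion point to $\F{q}$ by a factorization argument (the abscissa $x_0$ lies in $\F{q^4}$ and is a root of the cubic $f$, so its degree over $\F{q}$ is $1$ or $2$, leaving a rational root of $f$). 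You instead work uniformly on $\EQS$: push $E$ to a $(2,2)$-curve $\overline{E}$ through the four $T_4$-invariant points, restrict the two ruling classes to get $P_0+P_1 \sim P_2+P_3$ and $P_0+P_3 \sim P_1+P_2$, and conclude that $T = [P_0-P_2] = [P_1-P_3]$ is a nonzero $2$-torsion class fixed by Frobenius (which indeed must carry the non-incident pair $\{P_0,P_2\}$ to $\{P_1,P_3\}$, since it preserves the incidence structure of the lines $L_i$). This removes both the case distinction and the Weierstrass computation; your divisor relations are exactly the quadric analogue of Lemma~\ref{cubicCurveThroughP4Q2}, and Frobenius equivariance replaces the paper's factorization trick. (Minor slip: subtracting your two relations gives $2(P_1-P_3)\sim 0$; it is adding them that gives $2(P_0-P_2)\sim 0$ --- both hold.)

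The step you flag as the main obstacle is genuine but standard, and it closes as follows. Since $-K_{S_4}$ is Cartier, a smooth member $E \in |{-}K_{S_4}|$ cannot pass through a singular point $\h{P}_i$: an $\rmA_1$ point has embedding dimension $3$, and cutting by the single local equation of a Cartier divisor lowers embedding dimension by at most one, so any Cartier curve through $\h{P}_i$ has embedding dimension $\geqslant 2$ there, i.e.\ is singular. Hence $E$ lies in the smooth locus, its total and proper transforms under $\varphi_{min}$ coincide, and crepancy really does give $E^\prime \in |{-}K_{S_4^\prime}|$. After that, everything you worried about is forced by intersection numbers: $E^\prime \cd E_{P_i} = \big(bl_{\bfP_{\!4}}^*({-}K_{\EQS}) - \sum_j E_{P_j}\big)\cd E_{P_i} = 1$ makes $\overline{E}$ pass through every $P_i$ simply and $E^\prime \to \overline{E}$ an isomorphism, while tangency of $\overline{E}$ to $L_j$ at one of its two marked points would force $L_j \cd \overline{E} \geqslant 3 > 2$, which is impossible; degeneration as in Remark~\ref{ellipticCurvesFromL} only appears when one further projects to $\P^2$, which you never do. Finally, your last step tacitly uses $E(\F{q}) \neq \emptyset$ and the fact that a Frobenius-fixed class in $\Pic^0(\overline{E})$ comes from a rational point; both are immediate over a finite field (take $\O \in E(\F{q})$, write $T = [Q - \O]$, then $[\phi(Q) - Q] = 0$ forces $\phi(Q) = Q$), but they deserve a sentence.
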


\begin{proof}
	The result is proved by exhibiting an $\F{q}$-point of order $2$ on $E$. 
	
	By Lemma \ref{cubicCurveThroughP4Q2} it remains to only consider the case with a quartic $C \in \L$ from Remark \ref{ellipticCurvesFromL}.\ref{ellipticQuarticFromL}. The corresponding elliptic curve $D$ has the group structure with respect to any point $\O \in D(\F{q}) \neq \emptyset$ \big(instead of $\inv{\varphi}(P_0)$\big) as the neutral element. It is well known that there is the group $\F{q^4}$-isomorphism $\tau(P) := P + \O$, $\tau\!: D \simTo D.$ At the same time, $D$ has a Weierstrass form $W\!: y^2 + h(x)y = f(x)$ defined over $\F{q}$, where $\deg(h) \leqslant 1$, $\deg(f) = 3$. Let $\sigma\!: D \simTo W$ be the corresponding $\F{q}$-isomorphism such that $\sigma(\O)$ is the point at infinity.
	
	Consider the $\F{q^4}$-point $(x_0,y_0) := (\sigma \circ \tau \circ \inv{\varphi})(P_2)$ of order $2$ on $W$. For $p=2$ it is the only such point, hence it is defined over $\F{q}$. For $p>3$, as is known, $y_0 = f(x_0) = 0$. If $x_0 \in \F{q}$, then all is proved. Otherwise $f(x)$ has exactly two $\F{q}$-irreducible factors, namely the $\F{q}$-minimal (quadratic) polynomial of $x_0$ and $x - x_1$ for some $x_1 \in \F{q}$. Thus $(x_1,0) \in W(\F{q})[2]$.
\end{proof}

\begin{corollary} \label{supersingularS4}
	For $p=2$ supersingular elliptic curves $($i.e., of $j$-invariant $0)$ \mbox{\rm\cite[\S 2.4.3]{TsfasmanVladutNogin}} do not belong to $|{-}K_{S_4}|$.
\end{corollary}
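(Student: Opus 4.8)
The plan is to read the result off from the preceding theorem together with the classical point count for supersingular curves. I would argue by contradiction: suppose some supersingular elliptic $\F{q}$-curve $E$ lies in $|{-}K_{S_4}|$. By the theorem just proved, the order $|E(\F{q})|$ is then even, so it suffices to show that in characteristic $p = 2$ a supersingular curve has \emph{odd} order, which yields the contradiction.

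To carry this out I would write $t := q + 1 - |E(\F{q})|$ for the trace of Frobenius and invoke the standard characterization of supersingularity: an elliptic curve over $\F{q}$ with $q = p^n$ is supersingular if and only if $p \mid t$ (see, e.g., \cite{Silverman}); in characteristic $2$ this is moreover equivalent to $j(E) = 0$, which is the phrasing used in the statement. Hence $t$ is even. Since $q$ is a power of $2$, the integer $q + 1$ is odd, and therefore $|E(\F{q})| = q + 1 - t$ is odd as well. This contradicts the parity forced by the preceding theorem, so no such $E$ exists and every elliptic curve in $|{-}K_{S_4}|$ is ordinary.

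There is essentially no obstacle in this deduction; the only ingredient not obtained by a one-line computation is the detection of supersingularity by the divisibility $p \mid t$, for which a reference suffices. The argument is the exact analogue of Corollary \ref{supersingularS3}: there one uses that $|E(\F{q})|$ is divisible by $3$, whereas a supersingular curve in characteristic $3$ satisfies $3 \mid t$ and $3 \mid q$, so $|E(\F{q})| = q + 1 - t \equiv 1 \pmod 3$, again a contradiction.
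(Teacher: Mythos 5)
Your argument is correct and is precisely the (implicit) one the paper intends: the corollary is stated as an immediate consequence of the preceding theorem, and the intended deduction is exactly your parity computation — supersingularity over $\F{q}$ with $q=2^n$ forces $2 \mid t$, so $|E(\F{q})| = q+1-t$ is odd, contradicting the evenness established by the theorem. Your remark on the analogy with Corollary \ref{supersingularS3} (the mod-$3$ computation for $p=3$) likewise matches the paper's parallel structure.
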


Finally, carefully analyzing small values $q$, we get the following result.

\begin{corollary} \label{FqPointsAntiCanDivisorsS4}
	For any $\F{q}$-divisor $D \in |{-}K_{S_4}|$ we have 
	$$
	|\Supp(D)(\F{q})| \leqslant 2\lfloor N_q(1)/2 \rfloor,
	$$ 
	where the number $N_q(1)$ is given in Theorem {\rm\ref{NqOne}}.
\end{corollary}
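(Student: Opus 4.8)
The plan is to separate the smooth anticanonical curves from the degenerate members of $|{-}K_{S_4}|$ and to treat the two families by completely different mechanisms. If $D$ is a smooth member, then $D=\Supp(D)$ and $D$ is an elliptic $\F{q}$-curve by Lemma \ref{toricProperties}, so the theorem above asserting that $|E(\F{q})|$ is even applies, while the defining property of $N_q(1)$ gives $|D(\F{q})|\le N_q(1)$. The largest even integer not exceeding $N_q(1)$ is exactly $2\lfloor N_q(1)/2\rfloor$, so the bound holds at once. This is the step that dictates the very shape of the estimate and is the precise analogue of the divisibility-by-$3$ argument behind Corollary \ref{FqPointsAntiCanDivisorsS3}.

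The crux is therefore to show that every degenerate $D$ (one that is not a smooth genus-one curve) has far fewer rational points on its support. First I would observe that each such $D$ is automatically $\F{q}$-irreducible: since the Picard $\F{q}$-number of $S_4$ equals $1$, i.e.\ $\Pic(S_4)=\langle{-}K_{S_4}\rangle$, every effective $\F{q}$-divisor class is a nonnegative multiple of ${-}K_{S_4}$, so any $\F{q}$-irreducible curve on $S_4$ has anticanonical degree in $4\Z_{\ge 1}$; as $({-}K_{S_4})^2=4$ and ${-}K_{S_4}$ is ample, an anticanonical $\F{q}$-divisor cannot split off a proper $\F{q}$-rational part and must be a single reduced $\F{q}$-irreducible curve of anticanonical degree $4$.

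Consequently the geometric components of a degenerate $D$ form a single Frobenius orbit $\{C,\mathrm{Frob}(C),\dots\}$, whose length $r$ divides $4$ because conjugate components share the anticanonical degree $4/r$; thus $r\in\{1,2,4\}$. For $r\ge 2$ no geometric component is defined over $\F{q}$, hence any $\F{q}$-rational point of $\Supp(D)$ must be fixed by Frobenius and therefore lie on an intersection of two distinct conjugate components. Since $\Supp(D)$ has arithmetic genus $1$, the number of such crossing points is exactly $r$, so there are at most $r\le 4$ rational points in these cases: concretely, two $\F{q}$-conjugate conics meeting in $2$ points for $r=2$, and the four conjugate lines $\h{L}_i$ meeting in the four conjugate singular points $\h{P}_i$ for $r=4$, the latter contributing no rational point at all. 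These counts are trivially below $2\lfloor N_q(1)/2\rfloor$.

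It remains to treat $r=1$, i.e.\ $D$ geometrically irreducible but singular. Then its geometric genus is $0$, the unique singular point is Frobenius-fixed, and the normalization is $\P^1$, so a short local analysis of the single admissible singularity (a node or a cusp, forced by arithmetic genus one) bounds $|\Supp(D)(\F{q})|$ by $q+2$, the worst case being a node with $\F{q}$-conjugate branches. For $q\ge 4$ one has $2\lfloor N_q(1)/2\rfloor\ge q+4>q+2$, so the inequality is comfortable; the residual values $q=2,3$ are exactly where the estimate becomes tight (for instance $q+2=4=2\lfloor N_2(1)/2\rfloor$) and must be confirmed by hand, which is the meaning of the remark about small $q$. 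I expect this finite small-$q$ check, together with verifying that the trichotomy $r\in\{1,2,4\}$ and the associated singular-support configurations are genuinely exhaustive and compatible with the non-split Frobenius action, to be the only delicate part; once the configuration and the Galois action on it are pinned down, every individual point count is routine.
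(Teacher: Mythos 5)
Your first paragraph (the smooth members) is correct and is exactly the paper's mechanism: evenness of $|E(\F{q})|$ from the theorem preceding the corollary, plus $|E(\F{q})|\leqslant N_q(1)$, gives the bound $2\lfloor N_q(1)/2\rfloor$; the paper relegates everything else to a terse \grqq careful analysis'', so the burden of your proof sits exactly where you put it. But the step you yourself call the crux has a genuine gap. You infer from $\Pic(S_4)=\langle -K_{S_4}\rangle$ that every effective $\F{q}$-divisor class is a nonnegative multiple of $-K_{S_4}$, hence that every $\F{q}$-irreducible curve has anticanonical degree in $4\Z_{\geqslant 1}$. This is invalid because $S_4$ is \emph{singular}: $\Pic$ classifies Cartier divisors only, and the curves that actually arise as components of anticanonical members fail to be Cartier at the $\rmA_1$-points, so they carry no class in $\Pic$ at all. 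Concretely, the four lines $\h{L}_i$ have anticanonical degree $1$ and $\h{\bfL}_4=\sum_i\h{L}_i\in|{-}K_{S_4}|$; the images on $S_4$ of rulings of $\EQS$ through two of the blown-up points are conics of degree $2$ through two singular points; and $-K_{S_4}$ is $2$-divisible in the Weil class group $\mathrm{Cl}(S_4)\simeq\Z^2\oplus\Z/2\Z$, which is strictly larger than $\Pic$. So \grqq degree in $4\Z$'' is simply false for irreducible curves on $S_4$, and your argument as written does not exclude, say, $D=C_1+C_2$ with $C_1,C_2$ two $\F{q}$-rational conics each passing through two singular points --- a configuration with up to $2(q+1)$ rational points, which would destroy the bound for large $q$. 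The $\F{q}$-irreducibility and reducedness of members of $|{-}K_{S_4}|$ is true, but only because of the non-split Frobenius action, which this step of your proof never invokes.

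The repair is short and uses ingredients you already have. Let $C$ be an $\F{q}$-rational curve on $S_4$ of anticanonical degree $<4$. Either $C$ avoids $\mathrm{Sing}(S_4)$ --- then $C$ \emph{is} Cartier, its class lies in the $\F{q}$-Picard group $\Z\cdot(-K_{S_4})$, and ampleness forces degree $4m\geqslant 4$, a contradiction --- or $C$ passes through a nonempty Frobenius-stable subset of the four singular points $\h{P}_i$; since these form a single Frobenius orbit permuted cyclically, that subset must be all four, and then the proper transform $\widetilde{C}$ on $S_4^\prime$ meets each of the four exceptional $(-2)$-curves, so $(-K_{S_4})\cdot C=(-K_{S_4^\prime})\cdot\widetilde{C}\geqslant 4$ because $-K_{S_4^\prime}$ is the sum of all eight torus-invariant curves and $\varphi_{min}$ is crepant --- again a contradiction. (Equivalently, one can compute $\mathrm{Cl}(S_4)$ with its Frobenius action and check that Frobenius-invariant effective classes have degree in $4\Z$.) With this lemma in place, your trichotomy $r\in\{1,2,4\}$, the crossing-point counts (note that \grqq exactly $r$'' should be \grqq at most'': conjugate tangent conics meet in one point, and the $r=4$ member $\h{\bfL}_4$ has its nodes at the non-rational $\h{P}_i$), the bound $q+2$ for geometrically irreducible rational members, and the numerical comparison with $2\lfloor N_q(1)/2\rfloor$ for every $q$ all go through, and your argument then genuinely supplies the degenerate-case analysis that the paper only gestures at.
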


\section{BCH codes}

Let us recall some notions of {\it BCH codes} over an arbitrary finite field $\F{q}$. Let $n, d^*, b \in \N$, where $d^*$ is so-called {\it designed distance}. Also, let $\alpha$ be a primitive $n$-th root of unity and $e := [\F{q}(\alpha) : \F{q}]$. $\BCH_q(n, d^*, b)$ is a cyclic code given by the generator polynomial 
$$g(x) = \mathrm{LCM}(m_{\alpha^b}, m_{\alpha^{b + 1}}, \cdots\!, m_{\alpha^{b + d^* - 2}}),$$
where $m_{\alpha^i}$ is the $\F{q}$-minimal polynomial of $\alpha^i$. A BCH code is said to be {\it primitive} (resp. {\it narrow-sense}) if $n = q^e - 1$ (resp. $b = 1$). The theory of BCH codes is well represented, for example, in \cite[\S 9]{MacWilliamsSloane}.

\begin{theorem}[{\cite[Theorem 9.1.a]{Stichtenoth}}] For a $\BCH_q(n, d^*, b)$ code we have
	$$
	k \geqslant n - e(d^*-1),\qquad d \geqslant d^*.
	$$
\end{theorem}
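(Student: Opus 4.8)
The plan is to treat the two inequalities separately: the dimension bound is a degree count on the generator polynomial, and the distance bound is the classical Vandermonde argument. For the dimension bound, recall that for a cyclic code of length $n$ with generator polynomial $g$ one has $k = n - \deg(g)$. Here $g = \mathrm{LCM}(m_{\alpha^b}, \ldots, m_{\alpha^{b+d^*-2}})$ is the least common multiple of $d^*-1$ minimal polynomials, and each $m_{\alpha^i}$ has degree dividing $e = [\F{q}(\alpha):\F{q}]$, because $\alpha^i$ lies in $\F{q}(\alpha) = \F{q^e}$. Hence each factor has degree at most $e$, so $\deg(g) \leqslant e(d^*-1)$, which gives $k = n - \deg(g) \geqslant n - e(d^*-1)$.

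For the distance bound, I would first characterize the code intrinsically. A polynomial $c(x) = \sum_{i=0}^{n-1} c_i x^i$ with coefficients in $\F{q}$ is a codeword if and only if $g \mid c$. Since the roots of $g$ are precisely $\alpha^b, \ldots, \alpha^{b+d^*-2}$ together with their $\F{q}$-conjugates, and the conjugate conditions are automatic for a polynomial with $\F{q}$-coefficients, this is equivalent to the $d^*-1$ vanishing conditions $c(\alpha^{b+j}) = 0$ for $j = 0, 1, \ldots, d^*-2$. Now suppose, towards a contradiction, that some nonzero codeword $c$ has Hamming weight $w \leqslant d^*-1$, supported on positions $i_1 < \cdots < i_w$ with nonzero coefficients $c_{i_1}, \ldots, c_{i_w}$. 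Taking just the first $w$ vanishing conditions (indices $j = 0, \ldots, w-1$, which is legitimate since $w-1 \leqslant d^*-2$) yields the homogeneous system
$$
\sum_{l=1}^{w} c_{i_l}\,(\alpha^{i_l})^{b+j} = 0, \qquad j = 0, 1, \ldots, w-1.
$$

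Writing $\eta_l := \alpha^{i_l}$ and absorbing the common factor $\eta_l^b$ into the unknowns, the coefficient matrix of this system is the Vandermonde matrix $(\eta_l^{\,j})_{0 \leqslant j \leqslant w-1,\ 1 \leqslant l \leqslant w}$, whose determinant equals $\prod_{1 \leqslant l < l' \leqslant w}(\eta_{l'} - \eta_l)$. Because $\alpha$ is a primitive $n$-th root of unity and $0 \leqslant i_1 < \cdots < i_w \leqslant n-1$, the exponents are distinct modulo $n$, so the $\eta_l$ are pairwise distinct and the Vandermonde determinant is nonzero. The only solution is therefore $c_{i_l}\eta_l^b = 0$ for all $l$, forcing $c_{i_l} = 0$ and contradicting the choice of support. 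Hence every nonzero codeword has weight at least $d^*$, i.e. $d \geqslant d^*$.

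The only genuinely delicate points are the distinctness of the $\eta_l$ and the correct intrinsic description of the code as the set of words vanishing at the consecutive powers $\alpha^b, \ldots, \alpha^{b+d^*-2}$ (including the reduction that the conjugate conditions are free over $\F{q}$). Once these are established, the nonvanishing of the Vandermonde determinant is immediate and the contradiction argument closes with no further computation.
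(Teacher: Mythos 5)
Your proof is correct and complete: the degree count $\deg(g) \leqslant e(d^*-1)$ (since each $\alpha^i$ lies in $\F{q^e}$, so $\deg m_{\alpha^i} \mid e$) gives the dimension bound, and your Vandermonde argument, including the two delicate points you flag (the conjugate conditions being free over $\F{q}$, and the distinctness of the $\eta_l = \alpha^{i_l}$ because $\alpha$ has exact order $n$), correctly yields $d \geqslant d^*$. However, note that the paper itself offers no proof at all — the theorem is imported verbatim from the cited reference — and the route suggested by the paper's framework (and taken in that reference) is genuinely different from yours. There, a $\BCH_q(n, d^*, b)$ code is realized, as in the paper's next theorem, by puncturing the split toric code $\C_{q^e}(\P^1, \G_m, rP_0 + sP_\infty)$ (a generalized Reed--Solomon code over $\F{q^e}$) and restricting to $\F{q}$: the bound $d \geqslant d^*$ is then inherited because neither operation can decrease the relative weight of a surviving nonzero codeword, and $k \geqslant n - e(d^*-1)$ follows from the standard Delsarte-type estimate for subfield subcodes. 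Your argument is the classical elementary one: it is entirely self-contained, using only the factorization of $x^n - 1$ and a Vandermonde determinant, whereas the evaluation-code route requires the machinery of Reed--Solomon codes and subfield restriction but has the advantage of fitting the article's toric viewpoint — it is precisely what lets the author recognize the codes $\C_3, \C_4$ as BCH codes in the first place.
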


\noindent The second inequality is called the {\it BCH bound}.

\begin{theorem}[{\cite[Proposition 2.3.9]{Stichtenoth}}] 
	Let 
	$$r := b-1,\qquad s := n + 1 - d^* - b,\qquad P_0 := (0:1),\qquad P_\infty := (1:0).$$
	A $\BCH_q(n, d^*, b)$ code is obtained by the successive puncturing of the split toric code $\C_{q^e}(\P^1, \G_m, rP_0 + sP_\infty)$ $($see the notation in {\rm\cite[\S 3.1]{MyArticle2019})} at the coordinate set $\sqrt[n]{1}$ and the restriction to $\F{q}$ {\rm(}or in another order{\rm)}. 
\end{theorem}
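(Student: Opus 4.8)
The plan is to unwind both constructions into the common language of cyclic codes described by their spectra (defining sets) and then to check that the two defining sets coincide. First I would recall, following the conventions of \cite[\S 3.1]{MyArticle2019}, that the split toric code $\C_{q^e}(\P^1, \G_m, rP_0 + sP_\infty)$ is the evaluation code of the Riemann--Roch space $L(rP_0 + sP_\infty) = \langle x^{-r}, x^{-r+1}, \ldots, x^s\rangle_{\F{q^e}}$ at the $q^e-1$ torus points $\G_m(\F{q^e}) = \F{q^e}^\times$. Since $\alpha$ is a primitive $n$-th root of unity in $\F{q^e}$, the coordinate set $\sqrt[n]{1} = \{\alpha^l\}_{l=0}^{n-1}$ sits inside these points, and restricting (puncturing) the evaluation to it yields a code $R$ over $\F{q^e}$ of length $n$. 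On $\sqrt[n]{1}$ one has $x^n = 1$, so the spanning monomials descend modulo $x^n-1$; as $r+s+1 = n-(d^*-1) \leqslant n$ by the very definitions of $r,s$, they remain $\F{q^e}$-linearly independent, whence $\dim_{\F{q^e}} R = n-d^*+1$. Because multiplication by $\alpha$ permutes $\sqrt[n]{1}$ cyclically and sends each $x^m$ to a scalar multiple of itself, $R$ is a cyclic $\F{q^e}$-code.

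Next I would compute the defining set of $R$ through the discrete Fourier transform $C_i := c(\alpha^i) = \sum_l c_l \alpha^{il}$. The basis codeword coming from $x^m$ is $(\alpha^{ml})_{l=0}^{n-1}$, whose transform is supported on the single residue $i \equiv -m \pmod n$ (a geometric-sum computation). Hence the spectrum of $R$ lives exactly on $\{-m : -r \leqslant m \leqslant s\} = \{-s, \ldots, r\} \pmod n$, so the zero set of $R$ is the complementary block. Substituting $r = b-1$ and $s = n+1-d^*-b$ and reducing modulo $n$, this complementary block is precisely the consecutive string $\{b, b+1, \ldots, b+d^*-2\}$. Thus $R$ is the cyclic $\F{q^e}$-code whose zeros are exactly $\alpha^b, \ldots, \alpha^{b+d^*-2}$.

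Finally I would pass to the subfield: the restriction to $\F{q}$ is the subfield subcode $R \cap \F{q}^n$. For a vector $c \in \F{q}^n$ the Frobenius identity $C_{qi} = c(\alpha^{qi}) = c(\alpha^i)^q = C_i^q$ (valid because the $c_l$ lie in $\F{q}$) shows that $C_i = 0$ forces $C_{qi} = 0$; therefore the defining set of $R \cap \F{q}^n$ is the smallest $q$-cyclotomic-coset-closed set containing $\{b, \ldots, b+d^*-2\}$. This is exactly the set of roots of $\mathrm{LCM}(m_{\alpha^b}, \ldots, m_{\alpha^{b+d^*-2}})$, so $R \cap \F{q}^n = \BCH_q(n, d^*, b)$. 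Since restricting to a subset of coordinates and taking a subfield subcode act on disjoint features of the code, they commute, which justifies the clause \emph{or in another order}.

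The routine parts --- the Riemann--Roch computation on $\P^1$, the geometric-sum evaluation of the transform, and the cyclotomic-coset bookkeeping --- are all standard. I expect the only genuine obstacle to be one of conventions rather than of mathematics: one must pin down precisely the orientation of $P_0, P_\infty$ and the exact monomial basis attached to $rP_0 + sP_\infty$ in \cite[\S 3.1]{MyArticle2019}, and then track the reduction of the negative exponents modulo $n$ carefully enough to be certain that the complementary block lands on the \emph{consecutive} interval $\{b, \ldots, b+d^*-2\}$ with the stated endpoints, rather than on a shifted or reversed one.
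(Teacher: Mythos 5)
The paper itself gives no proof of this theorem --- it is cited from \cite[Proposition 2.3.9]{Stichtenoth} --- so your argument can only be measured against the standard one, and for the main assertion (puncture down to $\sqrt[n]{1}$ first, then restrict to $\F{q}$) your write-up is correct and is exactly that standard argument: the Riemann--Roch basis $x^{-r},\dots,x^s$, the observation that these $n-d^*+1\leqslant n$ exponents stay distinct modulo $n$, the discrete Fourier transform computation showing the spectrum of the punctured code is $\{-s,\dots,r\} \bmod n$, hence its zero set is the consecutive block $\{r+1,\dots,n-s-1\}=\{b,\dots,b+d^*-2\}$, and the Frobenius identity $C_{qi}=C_i^q$ identifying the subfield subcode with the cyclic code generated by $\mathrm{LCM}(m_{\alpha^b},\dots,m_{\alpha^{b+d^*-2}})$. (The geometric-sum step silently needs $p \nmid n$, which is automatic since $n \mid q^e-1$.)

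The genuine gap is your last step, the clause \emph{or in another order}. The assertion that puncturing and subfield restriction ``act on disjoint features of the code'' and therefore commute is not an argument, and as a general principle it is false: only one inclusion comes for free, namely that a codeword of the full-length toric code which is $\F{q}$-valued on \emph{all} of $\G_m(\F{q^e})$ restricts to an element of $(\text{punctured code})\cap\F{q}^n$. The converse would require every $\BCH_q(n,d^*,b)$ codeword to lift to a function of $L(rP_0+sP_\infty)$ taking $\F{q}$-values at every point of $\F{q^e}^\times$, and there is no freedom in choosing the lift: it is unique, since a function of $L(rP_0+sP_\infty)$ vanishing on all of $\sqrt[n]{1}$ would be divisible by $x^n-1$, impossible as $r+s=n-d^*<n$. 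In fact equality of the two orders can fail: take $q=4$, $n=5$, $b=1$, $d^*=2$, so that $e=2$, $r=0$, $s=3$. The subfield subcode of the $[15,4]_{16}$ toric code (evaluations of $\mathrm{span}_{\F{16}}\{1,x,x^2,x^3\}$ at $\F{16}^\times$) consists of the constant vectors only: the condition $f(\beta)^4=f(\beta)$ for all $\beta\in\F{16}^\times$ forces $a_1=a_2=a_3=0$, by comparing the exponent sets $\{0,4,8,12\}$ and $\{0,1,2,3\}$ modulo $x^{15}-1$. Restricting first and then puncturing therefore yields the $[5,1,5]_4$ repetition code, whereas $\BCH_4(5,2,1)$ has dimension $3$. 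So the reverse order produces a strictly smaller code; the parenthetical clause cannot be justified by any commutation principle, and under the natural subfield-subcode reading it is simply false. If it is to be retained, ``restriction to $\F{q}$'' must be reinterpreted (for instance, imposed only on the coordinates indexed by $\sqrt[n]{1}$), and a proof should say this explicitly rather than appeal to commutativity.
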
	

\begin{corollary}
	A primitive narrow-sense $\BCH_q(q^{e}-1, d^*, 1)$ code is the restriction to $\F{q}$ of the Reed--Solomon $\F{q^e}$-code of length $q^e-1$ and dimension $q^e-d^*$.
\end{corollary}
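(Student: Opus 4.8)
The plan is to specialize the preceding theorem \big(from \cite[Proposition 2.3.9]{Stichtenoth}\big) to the regime $n = q^e - 1$ (primitivity) and $b = 1$ (narrow-sense). First I would compute the two parameters occurring there: since $b = 1$ we obtain $r = b - 1 = 0$, while $s = n + 1 - d^* - b = (q^e - 1) + 1 - d^* - 1 = q^e - 1 - d^*$. Consequently the divisor reduces to $rP_0 + sP_\infty = (q^e - 1 - d^*)\,P_\infty$, supported entirely at $P_\infty = (1:0)$, with no contribution from $P_0 = (0:1)$.

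Next I would observe that the primitivity hypothesis $n = q^e - 1$ forces the coordinate set $\sqrt[n]{1}$ of $n$-th roots of unity to coincide with the full multiplicative group $\F{q^e}^{\times} = \G_m(\F{q^e})$ of $\F{q^e}$-points of the torus. Hence the successive puncturing of the split toric code $\C_{q^e}(\P^1, \G_m, (q^e-1-d^*)P_\infty)$ at $\sqrt[n]{1}$ discards no coordinate, i.e.\ it is trivial. Thus the theorem collapses to the assertion that the BCH code equals merely the restriction to $\F{q}$ of this one toric code.

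It then remains to recognize the toric code itself as a Reed--Solomon code. The associated Riemann--Roch space consists of those rational functions on $\P^1$ whose only pole lies at $P_\infty$ and has order at most $q^e - 1 - d^*$ (the vanishing of $r$ forbidding any pole at $P_0$); in the torus coordinate $x$ these are precisely the polynomials spanned by $1, x, \dots, x^{\,q^e - 1 - d^*}$, a space of dimension $q^e - d^*$. Evaluating them at the $q^e - 1$ points of $\F{q^e}^{\times}$ is by definition the Reed--Solomon $\F{q^e}$-code of length $q^e - 1$ and dimension $q^e - d^*$. Combining this identification with the (now purely) restriction-to-$\F{q}$ operation supplied by the theorem yields exactly the claimed description of the BCH code as the subfield subcode of that Reed--Solomon code.

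There is essentially no serious obstacle here: the corollary is a clean specialization rather than a new argument. The only points demanding care are the bookkeeping of $r$ and $s$ and the verification that, in the primitive case, the puncturing step degenerates so that the toric code can be read off directly as a Reed--Solomon code; once these are in place the statement follows immediately.
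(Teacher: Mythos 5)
Your proposal is correct and is precisely the specialization the paper intends: the corollary is stated without proof as an immediate consequence of the preceding theorem, and your bookkeeping ($r=0$, $s=q^e-1-d^*$, trivial puncturing since $\sqrt[n]{1}=\F{q^e}^{\times}$ when $n=q^e-1$, and identification of $\C_{q^e}(\P^1,\G_m,sP_\infty)$ with the Reed--Solomon code of dimension $s+1=q^e-d^*$) fills in exactly the routine verification the paper leaves to the reader.
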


\section{Codes associated with the \grqq symmetric'' polygons}

Next we will need the following facts. 

\begin{theorem}[Griesmer bound \mbox{\cite[Theorem 1.1.43]{TsfasmanVladutNogin}}]
	For any linear $[n,k,d]_q$ code we have
	$$
	\delta := n - \sum_{i=0}^{k-1}\left\lceil \dfrac{d}{q^i} \right\rceil \geqslant 0.
	$$	
\end{theorem}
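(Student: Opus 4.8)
The plan is to prove the bound by induction on the dimension $k$, using the classical residual code construction. For the base case $k = 1$ the inequality reads $n \geqslant \lceil d/q^0 \rceil = d$, which holds trivially since any nonzero codeword has weight at most $n$. For the inductive step, fix an $[n,k,d]_q$ code $C$ and a codeword $c \in C$ of minimal weight $d$, with support $S$ of size $d$. After applying a monomial equivalence, which preserves all three parameters, I may assume that $c$ equals $1$ on every coordinate of $S$ and $0$ elsewhere. I then define the residual code $\mathrm{Res}(C)$ as the puncturing of $C$ on $S$, i.e. the image of $C$ under deletion of the $d$ coordinates in $S$; it has length $n - d$.

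Next I would establish that $\mathrm{Res}(C)$ has dimension exactly $k-1$. The kernel of the puncturing map consists of the codewords supported inside $S$; any nonzero such codeword has weight $\leqslant |S| = d$ but also $\geqslant d$ by minimality, hence weight exactly $d$ and full support on $S$. If $x, x'$ were two such codewords, then a suitable combination $x - \lambda x'$ would vanish at some coordinate of $S$, giving weight $< d$, whence $x - \lambda x' = 0$; so the kernel is the line $\langle c \rangle$ and $\dim \mathrm{Res}(C) = k - 1$.

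The crux is the minimum-distance bound $d' \geqslant \lceil d/q \rceil$ for $\mathrm{Res}(C)$, and this is the step I expect to require the most care. Given any $x \in C$ with $x \notin \langle c \rangle$, I restrict $x$ to the $d$ coordinates of $S$; by the pigeonhole principle some value $\lambda \in \F{q}$ occurs at least $\lceil d/q \rceil$ times there. Then $x - \lambda c$ is a nonzero codeword, so has weight $\geqslant d$, yet it vanishes on at least $\lceil d/q \rceil$ positions of $S$; hence its weight on the complement of $S$ is at least $d - (d - \lceil d/q \rceil) = \lceil d/q \rceil$. Since $c$ vanishes off $S$, the residual image of $x$ agrees with that of $x - \lambda c$ off $S$, so it has weight at least $\lceil d/q \rceil$, as claimed.

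Finally I would combine these facts. Applying the induction hypothesis to $\mathrm{Res}(C)$ and using $d' \geqslant \lceil d/q \rceil$ together with the monotonicity of $\lceil \cdot / q^i \rceil$ and the identity $\bigl\lceil \lceil a/q \rceil / q^i \bigr\rceil = \lceil a/q^{i+1} \rceil$ for positive integers, I obtain
$$
n - d \;\geqslant\; \sum_{i=0}^{k-2} \left\lceil \frac{d'}{q^i} \right\rceil \;\geqslant\; \sum_{i=0}^{k-2} \left\lceil \frac{d}{q^{i+1}} \right\rceil \;=\; \sum_{i=1}^{k-1} \left\lceil \frac{d}{q^i} \right\rceil.
$$
Adding $d = \lceil d/q^0 \rceil$ to both sides yields $n \geqslant \sum_{i=0}^{k-1} \lceil d/q^i \rceil$, i.e. $\delta \geqslant 0$, completing the induction.
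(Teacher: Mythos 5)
Your proof is correct. Note that the paper does not prove this statement at all: it is quoted verbatim from the cited reference \cite[Theorem 1.1.43]{TsfasmanVladutNogin}, so there is no internal argument to compare against. What you have written is the classical residual-code proof of the Griesmer bound, and every step checks out: the kernel of the puncturing map is exactly the line $\langle c\rangle$ (your two-codeword argument forces any codeword supported in $S$ to be proportional to $c$), the pigeonhole step correctly yields $d' \geqslant \lceil d/q\rceil$ because any nonzero word of the residual code lifts to some $x \notin \langle c\rangle$ and $x - \lambda c \neq 0$ then has weight at least $d$, and the nested-ceiling identity $\bigl\lceil \lceil a/q\rceil / q^{i}\bigr\rceil = \lceil a/q^{i+1}\rceil$ is valid for positive integers, so the induction closes cleanly. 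This is essentially the proof one finds in the standard references, so your argument supplies precisely what the paper delegates to the citation.
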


\begin{theorem}[\mbox{\cite[Theorem 3.4.49]{TsfasmanVladutNogin}}] \label{NqOne}
	The maximal possible number of $\F{q}$-points on an elliptic $\F{q}$-curve is equal to
	$$
	N_q(1) = 
	\begin{cases}
	\hspace{-0.2cm}
	\begin{array}{ll}
	q + \lfloor 2\sqrt{q} \rfloor & \textrm{if }\: \sqrt{q} \notin \N,\ p < q, \textrm{ and }\ p \mid \lfloor 2\sqrt{q} \rfloor, \\
	q + \lfloor 2\sqrt{q} \rfloor + 1 & \textrm{otherwise}.
	\end{array} \qquad  
	\end{cases}
	$$
\end{theorem}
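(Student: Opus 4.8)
The plan is to translate the count $|E(\F{q})|$ into the trace of Frobenius and then invoke the Deuring--Waterhouse classification of admissible traces. Write $q = p^n$. For an elliptic $\F{q}$-curve $E$ one has $|E(\F{q})| = q + 1 - a$, where $a \in \Z$ is the trace of the $q$-power Frobenius endomorphism, and the Hasse bound gives $|a| \leqslant 2\sqrt{q}$. Since $q + 1 - a$ is decreasing in $a$, the quantity $N_q(1)$ equals $q + 1 + m$, where $-m$ is the smallest (most negative) value of $a$ actually realized by some elliptic curve over $\F{q}$. Thus everything reduces to identifying the largest integer $m \leqslant 2\sqrt{q}$ for which $a = -m$ occurs.

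The key input is the classical description (Waterhouse, building on Deuring) of which integers $a$ with $|a| \leqslant 2\sqrt{q}$ are traces of elliptic curves over $\F{q} = \F{p^n}$. I will use only the following consequences. First, every $a$ coprime to $p$ in the Hasse interval occurs (the ordinary case). Second, among the supersingular traces (those with $p \mid a$) the relevant ones for us are: $a = \pm 2\sqrt{q}$ when $n$ is even; $a = \pm p^{(n+1)/2}$ only when $n$ is odd and $p \in \{2,3\}$; and $a = 0$. This is exactly the information needed to decide the boundary value $m = \lfloor 2\sqrt{q}\rfloor$.

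With this in hand the argument splits according to whether $a = -\lfloor 2\sqrt{q}\rfloor$ is admissible. If $\sqrt{q}\in\N$ (i.e.\ $n$ is even) then $\lfloor 2\sqrt{q}\rfloor = 2\sqrt{q}$ and the supersingular trace $a = -2\sqrt{q}$ occurs, so $m = \lfloor 2\sqrt{q}\rfloor$ and $N_q(1) = q + 1 + \lfloor 2\sqrt{q}\rfloor$. If instead $p \nmid \lfloor 2\sqrt{q}\rfloor$, then $a = -\lfloor 2\sqrt{q}\rfloor$ is coprime to $p$ and lies in the Hasse interval, hence occurs as an ordinary trace, again giving the ``$+1$'' value. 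The remaining situation is $\sqrt{q}\notin\N$, $p < q$, and $p \mid \lfloor 2\sqrt{q}\rfloor$: here $n$ is odd with $n \geqslant 3$. Then $a = -\lfloor 2\sqrt{q}\rfloor$ cannot be an ordinary trace (being divisible by $p$) and, as I explain below, it is not a supersingular trace either; consequently $m < \lfloor 2\sqrt{q}\rfloor$. Since $p \mid \lfloor 2\sqrt{q}\rfloor$, the predecessor $\lfloor 2\sqrt{q}\rfloor - 1$ is coprime to $p$, so $a = -(\lfloor 2\sqrt{q}\rfloor - 1)$ does occur ordinarily; therefore $m = \lfloor 2\sqrt{q}\rfloor - 1$ and $N_q(1) = q + \lfloor 2\sqrt{q}\rfloor$, as claimed. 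The excluded case $n = 1$ (that is $q = p$) is consistent with the ``otherwise'' branch: for $p \geqslant 5$ one has $\lfloor 2\sqrt{p}\rfloor < p$, so $p \nmid \lfloor 2\sqrt{p}\rfloor$; for $p \in \{2,3\}$ one checks $\lfloor 2\sqrt{p}\rfloor = p = p^{(n+1)/2}$, a bona fide supersingular trace.

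I expect the only delicate point to be the claim, in the exceptional branch, that $a = -\lfloor 2\sqrt{q}\rfloor$ is not a supersingular trace. For $n$ odd the admissible nonzero supersingular traces are exactly $\pm p^{(n+1)/2}$ (and only for $p \in \{2,3\}$), so the obstruction reduces to the strict inequality $p^{(n+1)/2} < \lfloor 2\sqrt{q}\rfloor$ for $p \in \{2,3\}$ and $n \geqslant 3$ odd. This follows from $p^{(n+1)/2} = \sqrt{p}\cdot\sqrt{q}$ together with $\sqrt{p} < 2$, which yields a gap $(2 - \sqrt{p})\sqrt{q}$ that already exceeds $1$ once $q \geqslant p^3$; since this gap only grows with $n$, a one-line estimate closes it. With that inequality and $\lfloor 2\sqrt{q}\rfloor \neq 0$ in place, $-\lfloor 2\sqrt{q}\rfloor$ avoids every permitted supersingular value, completing the reduction to the ordinary predecessor.
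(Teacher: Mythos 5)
Your proof is correct. The paper itself gives no argument for this statement---it is quoted verbatim from \cite[Theorem 3.4.49]{TsfasmanVladutNogin}---and your reconstruction via the Hasse bound plus the Deuring--Waterhouse classification of admissible Frobenius traces (ordinary traces coprime to $p$, the supersingular values $\pm 2\sqrt{q}$ for even $n$, $\pm p^{(n+1)/2}$ for odd $n$ with $p\in\{2,3\}$, and $0$) is exactly the standard route taken in that reference, including the key estimate $p^{(n+1)/2} < \lfloor 2\sqrt{q}\rfloor$ that forces the drop to the ordinary trace $\lfloor 2\sqrt{q}\rfloor - 1$ in the exceptional branch.
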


\noindent For small $q$ Table \ref{optimalEllipticCurves} (the original source is \cite{vanderGeer}) contains Weierstrass forms of $\F{q}$-optimal elliptic curves, i.e., having $N_q(1)$ points over $\F{q}$. According to \cite[Theorem 4.6, Table I]{Schoof} these curves are unique (up to $\F{q}$-isomorphism) among $\F{q}$-optimal. The last column of the table is filled by \cite[Proposition 3.6.iv]{Schoof}. 

\begin{table}[h]	
	\centering
	\begin{tabular}{c|c|c|c|c}
		
		$q$ & $N_q(1)$ & elliptic $\F{q}$-curve & $j$-invariant & is supersingular \\
		\hline
		\hline
		
		$2$ & $5$ & $y^2+y=x^3+x$ & $0$ & yes \\ \hline
		$3$ & $7$ & $y^2=x^3+2x+1$ & $0$ & yes \\ \hline
		$4$ & $9$ & $y^2+y=x^3$ & $0$ & yes \\ \hline
		$5$ & $10$ & $y^2=x^3+3x$ & $1728$ & no \\ \hline
		$7$ & $13$ & $y^2=x^3+3$ & $0$ & no \\ \hline
		$8$ & $14$ & $y^2+xy+y=x^3+1$ & $1$ & no \\ \hline
		$9$ & $16$ & $y^2=x^3+x$ & $0$ & yes 
	\end{tabular}
	\caption{$\F{q}$-optimal elliptic curves for small $q$}
	\label{optimalEllipticCurves}	
\end{table}

For $i \in \{3, 4, 6, 8, 9\}$ by $\C_i$ we will denote the non-split toric $\F{q}$-code associated with the polygon $Pol_i$ from Figure \ref{symmetricReflexivePolygons}. In other words, $\C_i$ are anticanonical codes on the non-split toric del Pezzo $\F{q}$-surfaces from Table \ref{toricSurfaces}. In particular, $\C_9$ is equivalent to the so-called {\it projective Reed--Muller code}. The code parameters are represented in Table \ref{ToricCodesOnDelPezzoSurfaces} (for a value $q$ satisfying the restriction). The bound on $d$ for the new codes $\C_3$, $\C_4$ follows from Corollaries \ref{FqPointsAntiCanDivisorsS3}, \ref{FqPointsAntiCanDivisorsS4}.  Be careful that for very small $q$ (even if the restriction is satisfied) values of the column $\delta$ may be incorrect. 

\begin{table}[h]
	\centering
	{\footnotesize
		\begin{tabular}{c|c|c|c|c|c|c}
			code & $n$ & $k$ & $d$ & restriction & $\delta$ & reference \\
			\hline
			\hline
			
			$\C_3$ & $q^2 + q + 1$ & $4$ & $\geqslant n - 3\lfloor N_q(1)/3 \rfloor$ & $3 \leqslant q$ & $\leqslant 3\lfloor N_q(1)/3 \rfloor - q - 2$ & \multirow{2}{*}{new codes} \\
			\cline{1-6}
			
			$\C_4$ & $q^2 + 1$ & $5$ & $\geqslant n - 2\lfloor N_q(1)/2 \rfloor$ & & $\leqslant 2\lfloor N_q(1)/2 \rfloor - q - 2$ & \\
			\hline
			
			$\C_6$ & $q^2 - q + 1$ & $7$ & $n - N_q(1)$ & $5 \leqslant q$ & $N_q(1) - q - 3$ & \cite[Cor.\! 4]{MyArticle2019} \\
			\hline
			
			$\C_8$ & $q^2 + 1$ & $9$ & $n - 2(q+1)$ & $3 \leqslant q$ & $q-3$ & \cite[Prop.\! 4.7]{CouvreurDuursma} \\
			\hline
			
			$\C_9$ & $q^2 + q + 1$ & $10$ & $n - (3q + 1)$ & $5 \leqslant q$ & $2q - 5$ & \cite[\S 2]{Lachaud1990} \\
			
	\end{tabular}}
	\caption{The non-split toric codes on the polygons of Figure \ref{symmetricReflexivePolygons}}
	\label{ToricCodesOnDelPezzoSurfaces}
\end{table}

%

For $n, i \in \N$ let $\alpha \in \overline{\F{q}}$ be an element of order $n$ and $m_{\alpha^i}$ be the $\F{q}$-minimal polynomial of $\alpha^i$. In Table \ref{ParityCheckPolynomials} by means of \cite[Theorem 25]{MyArticle2019} it is written the parity-check polynomials $h(x)$ of the codes from Table \ref{ToricCodesOnDelPezzoSurfaces}. It is immediately checked that these codes are $\BCH_q(n, d^*, b)$ ones. Finally, the column LCD answers whether a cyclic code is a {\it linear code with complementary dual} (or, equivalently, {\it reversible}) or not (details see in \cite{YangMassey}). It is filled, looking at $h(x)$, but \grqq yes'' also follows from \cite[Corollary 3]{MyArticle2019} or \cite[Problem 7.27]{MacWilliamsSloane}. As a result, the dual codes to $\C_4$, $\C_6$ are $\BCH_q(n, 4, n-1)$ codes.

\begin{table}[h]
	\centering
	{\small
		\begin{tabular}{c|c|c|c|c|c}
			code & $h(x)$ & $d^*$ & $b$ & $d - d^*$ & LCD 
			\\ \hline\hline
			
			$\C_3$ & \multirow{3}{*}{$(x-1)\cdot m_{\alpha}$} & $q^2 - q$ & \multirow{3}{*}{$q+1$} & $\geqslant 2q + 1 - 3\lfloor N_q(1)/3 \rfloor$ & no 
			\\ \cline{1-1}\cline{3-3}\cline{5-6}
			
			$\C_4$ & & $q^2 - 2q + 1$ & & $\geqslant 2q - 2\lfloor N_q(1)/2 \rfloor$ & yes 
			\\ \cline{1-1}\cline{3-3}\cline{5-6}
			
			$\C_6$ & & $q^2 - 3q + 1$ & & $2q - N_q(1)$ & yes 
			\\ \hline
			
			$\C_8$ & $(x-1)\cdot m_{\alpha}\cdot m_{\alpha^{q+1}}$ & $q^2 - 2q - 1$ & $q+2$ & $0$  & yes
			\\ \hline
			
			$\C_9$ & 
			$(x-1)\cdot m_{\alpha}\cdot m_{\alpha^{q+1}}\cdot m_{\alpha^{q+2}}$ & $q^2 - 2q - 2$ & $q+3$ & $2$ & no
			
	\end{tabular}}
	\caption{The parity-check polynomials (the restrictions as in Table \ref{ToricCodesOnDelPezzoSurfaces})}
	\label{ParityCheckPolynomials}
\end{table}

The output of the code \cite{MagmaToricCodesC3C4} written in the language of the CAS Magma motivates us to formulate

\begin{conjecture}
	The lower bounds from Table {\rm\ref{ToricCodesOnDelPezzoSurfaces}} for the minimum distance $d$ of the codes $\C_3$, $\C_4$ are exact.
\end{conjecture}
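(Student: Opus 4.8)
The plan is to verify the two claimed lower bounds on the minimum distance $d$ computationally and then certify their exactness by exhibiting explicit minimum-weight codewords. The key observation is that for anticanonical AG codes, a codeword vanishing at a large number of $\F{q}$-rational points of the surface corresponds to an effective anticanonical divisor $D \in |{-}K_{S_i}|$ whose support contains those points. By Corollaries \ref{FqPointsAntiCanDivisorsS3} and \ref{FqPointsAntiCanDivisorsS4}, the maximal number of $\F{q}$-points that such a support can contain is $3\lfloor N_q(1)/3 \rfloor$ (for $S_3$) or $2\lfloor N_q(1)/2 \rfloor$ (for $S_4$), so these yield the lower bounds $d \geqslant n - 3\lfloor N_q(1)/3 \rfloor$ and $d \geqslant n - 2\lfloor N_q(1)/2 \rfloor$. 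To prove exactness I must produce, for each admissible $q$, a single anticanonical $\F{q}$-divisor whose support realizes the extremal point count, thereby giving a codeword of weight exactly equal to the claimed bound.

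First I would make the extremal divisors explicit. For $S_3$ the support-maximizing divisor should be an irreducible elliptic $\F{q}$-curve $E \in |{-}K_{S_3}|$ with $|E(\F{q})|$ as large as possible, subject to the divisibility constraint from the theorem that $3 \mid |E(\F{q})|$; since $|E(\F{q})| \leqslant N_q(1)$, the best achievable value is precisely $3\lfloor N_q(1)/3 \rfloor$, and one must check that an elliptic curve attaining it actually lies in the anticanonical system (equivalently, that it can be placed in the geometric configuration of Corollary \ref{AnticanLinearSystemS3} through $\bfP_{\!3}$ with the required tangency). Similarly for $S_4$, I would use an elliptic curve with $2 \mid |E(\F{q})|$ attaining $2\lfloor N_q(1)/2 \rfloor$, realized either as a curve through $\bfP_{\!4}, \bfQ_2$ (case 1 of Remark \ref{ellipticCurvesFromL}) or as a nodal quartic (case 2). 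The data of Table \ref{optimalEllipticCurves} together with the van der Geer--Schoof classification pins down the optimal curves for small $q$ and lets one verify the parity/divisibility conditions case by case.

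The computational core is then the verification that no codeword has weight strictly below the exhibited one. This amounts to confirming that \emph{every} effective anticanonical $\F{q}$-divisor $D$ — not just the smooth elliptic ones, but also the reducible and singular configurations of lines and lower-genus curves — has $|\Supp(D)(\F{q})| \leqslant 3\lfloor N_q(1)/3 \rfloor$ (resp. $2\lfloor N_q(1)/2 \rfloor$). This is exactly the content the corollaries assert \grqq by carefully analyzing small values $q$'', and the Magma output of \cite{MagmaToricCodesC3C4} is what underwrites the matching upper bound $d \leqslant n - 3\lfloor N_q(1)/3 \rfloor$ (resp.\ $n - 2\lfloor N_q(1)/2 \rfloor$) by directly computing the minimum distance of the finite codes. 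The proof would therefore combine the geometric lower bound (already established) with the explicit minimum-weight witness plus the machine-checked upper bound.

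The main obstacle is the \emph{uniformity} in $q$: the exhibited extremal divisor and the point-count bounds are established only for small $q$ by inspection, so the statement as phrased is really a finite collection of verified instances rather than an asymptotic theorem — which is precisely why the authors present it as a conjecture rather than a theorem. The genuine difficulty in upgrading this to a proof for all $q$ is twofold: one must show that an elliptic $\F{q}$-curve meeting the optimal point count $N_q(1)$ and the relevant $3$- or $2$-divisibility condition can \emph{always} be fitted into the constrained anticanonical linear system (a question about the surjectivity of the restriction-to-$\bfP_{\!3}$ or $\bfP_{\!4}$ map on sections, with prescribed tangency), and one must rule out degenerate configurations of lines that might spuriously cover more rational points than an elliptic curve. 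Controlling these configurations — where the theory of elliptic curves no longer directly applies — is where the argument resists a clean closed form, and it is the reason the bounds remain conjectural rather than proven exact for general $q$.
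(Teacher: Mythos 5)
You have correctly identified the actual status of this statement: the paper contains no proof of it. It is formulated as a conjecture whose only support is the output of the Magma code \cite{MagmaToricCodesC3C4} — i.e., the computed minimum distances in Tables \ref{CqS3T3} and \ref{CqS4T4}, which match the lower bounds of Table \ref{ToricCodesOnDelPezzoSurfaces} for small $q$. Your proposal assembles exactly the same two ingredients the paper has (the geometric lower bound from Corollaries \ref{FqPointsAntiCanDivisorsS3} and \ref{FqPointsAntiCanDivisorsS4}, plus machine verification of the matching upper bound for small $q$), and you are right that this yields a finite list of verified instances rather than a theorem. In that sense your plan coincides with the paper's state of knowledge rather than going beyond it.

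That said, two steps you describe as routine verifications are in fact the crux, and it is worth naming them more sharply than you do. First, to exhibit a minimum-weight codeword for general $q$ you need an elliptic curve with $|E(\F{q})| = 3\lfloor N_q(1)/3 \rfloor$ (resp. $2\lfloor N_q(1)/2 \rfloor$) that \emph{lies in the constrained system} $\L$. For $S_3$ the paper's lemma shows that membership in $\L$ forces the conjugate points $P_i$ to be order-$9$ points of the chord-tangent group law with $\langle P_0 \rangle = \langle P_1 \rangle = \langle P_2 \rangle$, permuted cyclically by Frobenius; this is an arithmetic constraint on $E$ (a condition on the Galois module structure of its $9$-torsion) strictly stronger than the divisibility $3 \mid |E(\F{q})|$, and Waterhouse-type existence results for curves with a prescribed point count do not automatically deliver it. Whether the extremal point count and this torsion structure are simultaneously realizable for every $q$ is precisely the open content of the conjecture; the analogous issue for $S_4$ involves the $2$-torsion configuration of Lemma \ref{cubicCurveThroughP4Q2} and the nodal quartics of Remark \ref{ellipticCurvesFromL}. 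Second, the weight of a codeword counts zeros only among the $n$ points of the torus, whereas the corollaries bound $|\Supp(D)(\F{q})|$ on the whole surface; your write-up silently identifies the two. The identification is correct for elliptic members of $|{-}K_{S_3}|$ — any $\F{q}$-rational boundary point of such a curve would have to be one of the points $\h{P}_i$, which are $\F{q}$-conjugate and hence not rational — but this conjugacy argument is a needed step, not a tautology. Neither gap matters for small $q$, where Magma settles everything, but together they are exactly why the statement remains a conjecture rather than becoming a theorem under your plan.
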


\noindent The codes $\C_3$, $\C_4$ for small $q$ are represented in Tables \ref{CqS3T3}, \ref{CqS4T4}. The column LB($d$) (lower bound on $d$ for fixed $q$, $n$, $k$) is rewritten from the Brouwer--Grassl tables \cite{Grassl}. Note that $\C_3$ for $q = 3$ and $\C_4$ for $q = 7$ (cf. \cite{DaskalovGulliver}) have parameters that are the best known at the moment.

\begin{table}[h]
	\quad\parbox{.45\linewidth}{	
		\centering
		\begin{tabular}{c|c|c|c|c}		
			
			$q$ & $n$ & $k$ & $d$ & LB($d$) \\
			\hline
			\hline	
			
			
			$3$ & $13$ & \multirow{6}{*}{$4$} & $7$ & $7$ \\ \cline{1-2}\cline{4-5}
			
			$4$ & $21$ & & $12$ & $14$ \\ \cline{1-2}\cline{4-5}
			
			$5$ & $31$ & & $22$ & $23$ \\ \cline{1-2}\cline{4-5}
			
			$7$ & $57$ & & $45$ & $47$ \\ \cline{1-2}\cline{4-5}
			
			$8$ & $73$ & & $61$ & $62$ \\ \cline{1-2}\cline{4-5}
			
			$9$ & $91$ & & $76$ & $79$	
			
		\end{tabular}
		\caption{The code $\C_3$ for small $q$}
		\label{CqS3T3}	
	}\qquad
	\parbox{.45\linewidth}{		
		\centering
		\begin{tabular}{c|c|c|c|c}
			
			$q$ & $n$ & $k$ & $d$ & LB($d$) \\
			\hline
			\hline
			
			
			$3$ & $10$ & \multirow{6}{*}{$5$} & $4$ & $5$ \\ \cline{1-2}\cline{4-5} 
			
			$4$ & $17$ & & $9$ & $10$ \\ \cline{1-2}\cline{4-5}
			
			$5$ & $26$ & & $16$ & $17$ \\ \cline{1-2}\cline{4-5}
			
			$7$ & $50$ & & $38$ & $38$ \\ \cline{1-2}\cline{4-5}
			
			$8$ & $65$ & & $51$ & $52$ \\ \cline{1-2}\cline{4-5}
			
			$9$ & $82$ & & $66$ & $67$
			
		\end{tabular}
		\caption{The code $\C_4$ for small $q$}
		\label{CqS4T4}
	}	
\end{table}

\begin{remark}
	The codes $\C_3$, $\C_4$ can be naturally generalized, using for any $r \in \N$ the multiple polygons $rPol_3$, $rPol_4$ as well as it is done for $\C_6$ in  {\rm\cite[Theorem 29]{MyArticle2019}}. However, in this case it seems that there are no elegant ways to quite exactly estimate the minimum distance $d$.
\end{remark}

\addcontentsline{toc}{section}{References}

\end{document}